\newbox\mybox
\def\overtag#1#2#3{\setbox\mybox\hbox{$#1$}\hbox to
  0pt{\vbox to 0pt{\vglue-#3\vglue-\ht\mybox\hbox to \wd\mybox
      {\hss$\ss#2$\hss}\vss}\hss}\box\mybox}
\def\undertag#1#2#3{\setbox\mybox\hbox{$#1$}\hbox to 0pt{\vbox to
    0pt{\vglue#3\vglue\ht\mybox\hbox to \wd\mybox
      {\hss$\ss#2$\hss}\vss}\hss}\box\mybox}
\def\lefttag#1#2#3{\hbox to 0pt{\vbox to 0pt{\vglue -6pt\hbox to
      0pt{\hss$\ss#2$\hskip#3}\vss}}#1}
\def\righttag#1#2#3{\hbox to 0pt{\vbox to 0pt{\vglue -6pt\hbox to
      0pt{\hskip#3$\ss#2$\hss}\vss}}#1}
\let\ss\scriptstyle
\def\splicediag#1#2{\xymatrix@R=#1pt@C=#2pt@M=0pt@W=0pt@H=0pt}
\def\Dot{\lower.2pc\hbox to 2pt{\hss$\bullet$\hss}}
\def\Circ{\lower.2pc\hbox to 2pt{\hss$\circ$\hss}}
\def\Vdots{\raise5pt\hbox{$\vdots$}}
\newcommand\lineto{\ar@{-}}
\newcommand\dashto{\ar@{--}}
\newcommand\dotto{\ar@{.}}
\newcommand{\fract}[2]{\hbox{\leavevmode
  \kern.1em \raise .25ex \hbox{\the\scriptfont0 $#1$}\kern-.1em }\big/
  {\hbox{\kern-.15em \lower .5ex \hbox{\the\scriptfont0 $#2$}} }}
\renewcommand{\setminus}{\smallsetminus}
\newcommand\R{{\mathbb R}}
\newcommand\C{{\mathbb C}}
\newcommand\N{{\mathbb N}}
\newcommand\pa{{\mathfrak a}}
\newcommand\pb{{\mathfrak b}}
\newcommand\pc{{\mathfrak c}}
\newcommand{\Ker}{\operatorname{Ker}}
\renewcommand\O{{\mathcal O}}
\newcommand\lb{\llbracket}
\newcommand\rb{\rrbracket}
\renewcommand{\phi}{\varphi}
\renewcommand{\leq}{\leqslant}
\renewcommand{\geq}{\geqslant}
\newcommand\ub{{\mathbf{u}}}
\newcommand{\lgw}{\longrightarrow}
\newcommand{\lgm}{\longmapsto}
\newcommand{\ovl}{\overline}
\newcommand{\wdh}{\widehat}
\newcommand{\ini}{\operatorname{in}}
\newcommand{\wdt}{\widetilde}
\newcommand{\la}{\lambda}
\renewcommand{\O}{\mathcal{O}}
\renewcommand{\k}{\Bbbk}
\renewcommand{\a}{\alpha}
\renewcommand{\b}{\beta}
\newcommand{\e}{\varepsilon}
\newcommand{\K}{\mathbb{K}}
\newcommand{\g}{\gamma}
\newcommand{\ord}{\operatorname{ord}}
\newcommand{\D}{\Delta}
\newcommand{\NN}{\mathcal N}
\newcommand{\SA}{\mathcal{SA}}
\newcommand{\RR}{\mathcal R}
\newcommand{\Om}{\Omega}
\newcommand{\KK}{\mathcal K}
\newcommand{\LLL}{\mathcal L}
\newcommand{\x}{\mathbf{x}}
\newcommand{\w}{\mathbf{w}}
\newcommand{\nl}{\{\!\{}
\newcommand{\nr}{\}\!\}}
\newcommand{\Gr}{\operatorname{r}}
\newcommand{\Fr}{\operatorname{r}^{\mathcal{F}}}
\newcommand{\Ar}{\operatorname{r}^{\mathcal{A}}} 
\newcommand{\Tr}{\operatorname{r}^{\mathcal{W}}} 
\newcommand{\Wr}{\operatorname{r}^{\mathcal W}}
\newtheorem{theorem}{Theorem}[section]
\newtheorem{proposition}[theorem]{Proposition}
\newtheorem*{theorem*}{Theorem}
\newtheorem{corollary}[theorem]{Corollary}
\newtheorem{lemma}[theorem]{Lemma}
\newtheorem{claim}[theorem]{Claim}
\theoremstyle{definition}
\newtheorem*{amalgamation*}{Amalgamation}
\newtheorem{example}[theorem]{Example}
\newtheorem{remark}[theorem]{Remark}
\newtheorem{problem*}[theorem]{Problem}
\newtheorem*{remark*}{Remark}
\newtheorem{definition}[theorem]{Definition}
\begin{document}
\title{On the Nash points of subanalytic sets}
\author[A.~Belotto da Silva]{Andr\'e Belotto da Silva}
\author[O.~Curmi]{Octave Curmi}
\author[G.~Rond]{Guillaume Rond}
\address[A.~Belotto da Silva]{Université Paris Cité and Sorbonne Université, CNRS, IMJ-PRG, F-75013 Paris, France.}
\address[G.~Rond]{Universit\'e Aix-Marseille, Institut de Math\'ematiques de Marseille (UMR CNRS 7373), Centre de Math\'ematiques et Informatique, 39 rue F. Joliot Curie, 13013 Marseille, France.}
\address[O.~Curmi]{Alfréd Rényi Institute of Mathematics, Budapest, Hungary.}
\email[A.~Belotto da Silva]{andre.belotto@imj-prg.fr}
\email[O.~Curmi]{curmi@renyi.hu}
\email[G.~Rond]{guillaume.rond@univ-amu.fr}
\thanks{}

\subjclass[2010]{Primary 13B10, 14P20, 32C07; Secondary 13A18, 13J05, 32B20}

\keywords{Subanalytic set, Nash set, regular map}

\begin{abstract}
Based on a recently developed rank Theorem for Eisenstein power series, we provide new proofs of the following two results of W. Paw\l ucki:\\
I) The non regular locus of a complex or real analytic map is an analytic set.\\
II) The set of semianalytic or Nash points of a subanalytic set $X$ is a subanalytic set, whose complement has codimension two in $X$.
\end{abstract}

\maketitle


\setlength{\epigraphwidth}{9.3truecm}
\epigraph{Algebra is the offer made by the devil to the mathematician. The
devil says: ``I will give you this powerful machine, it will answer any
question you like. All you need to do is give me your soul: give up
geometry and you will have this marvellous machine."}{Sir Michael Atiyah, (Collected works. Vol. 6.\\
Oxford Science Publications, 2004).}

\section{Introduction}\label{sec:Intro}

We provide new proofs of two fundamental results of analytic and subanalytic geometry due to Paw\l ucki \cite{Pawthesis, Paw}:

\smallskip
\noindent
 I) the non-regular (in the sense of Gabrielov) locus of a complex or real-analytic map $\Phi: M \lgw N$ is a proper analytic subset of $M$, see Theorem \ref{thm:PawI},

\smallskip 
 \noindent
  II) the set of semianalytic or Nash points of a subanalytic set $X$ is a subanalytic set, whose complement has dimension has dimension $\leq\dim(X)-2$, see Theorem \ref{thm:PawII}.
  
  \smallskip
  This last result answers a question asked by H. Hironaka and S. \L ojasiewicz independently \cite{FG1}. In spite of being considered as fundamental results of subanalytic geometry, their original proofs are considered to be very hard, as noted by \L ojasiewicz: ``Sans doute, parmi les faits établis en géométrie sous-analytique le théorème de Paw\l ucki [result II] est le plus difficile à prouver (la démonstration compte environ soixante dix pages!)", \cite[Page 1591]{LojCite}. The goal of this paper is to provide short alternative proofs of these results. We develop, furthermore, new algebraic methods to subanalytic geometry, notably related to Eisenstein power series, which we expect to be of independent interest. These methods should be useful in order to extend some of our results in the case of $p$-adic subanalytic sets, cf. \cite{DVdD}.

\bigskip

Let $\K = \C$ or $\R$ and denote by $\K \{x_1,\ldots,x_n\}$ the sub-ring of formal power series which are convergent, that is, $\K$-analytic. Given a ring homomorphism:
\[
\phi : \K\{ \x \}\lgw \K\{ \ub \}
\]
where $\x = (x_1,\ldots,x_n)$ and $\ub = (u_1,\ldots, u_m)$, we say that $\phi$ is a morphism convergent power series if $\phi(f) = f(\phi(\x))$ for every $f\in \K\{ \x \}$. We denote by $\wdh{\phi}$ its extension to the ring of formal power series and we define:
\begin{equation}
\begin{aligned}
\text{the Generic rank:} & &\Gr(\phi) &:= \mbox{rank}_{\mbox{Frac}(\K \{ \ub \} )}(\mbox{Jac}(\phi)),\\
\text{the Formal rank:}& &\Fr(\phi) &:= \dim\left(\frac{\K \lb \x \rb  }{\Ker(\wdh{\phi})}\right),\\
\text{and the analytic rank:}& &\Ar(\phi)&:=\dim\left(\frac{\K \{ \x \}}{\Ker(\phi)}\right),
\end{aligned}
\end{equation}
of $\phi$, where $\mbox{Jac}(\phi)$ stands for the matrix $[\partial_{u_i} \phi(x_j)]_{i,j}$. The morphism $\phi$ is said to be \emph{regular} (in the sense of Gabrielov) if $\Gr(\phi)=\Fr(\phi)$. We recall that Gabrielov's rank Theorem \cite{Ga1,To2,BCR} states that:
\[
 \Gr(\phi) = \Fr(\phi) \implies \Gr(\phi) = \Fr(\phi) = \Ar(\phi).
\]
Consider a $\K$-analytic map $\Phi : M \lgw N$ between $\K$-analytic manifolds $M$ and $N$. Given $\pa \in M$, we denote by $\Phi_{\pa}$ the germ of the morphism at a point $\pa \in M$, and by $\Phi_{\pa}^{\ast} : \mathcal{O}_{\Phi(\pa)} \lgw\mathcal{O}_{\pa}$ the associated  morphism of local rings, where $\mathcal{O}_{\pa}$ stands for the ring of analytic function germs at $\pa$. For each $\pa \in M$, we set $\Gr_{\pa}(\Phi) := \Gr(\Phi^{\ast}_{\pa})$ and $\Fr_{\pa}(\Phi):= \Fr(\Phi^{\ast}_{\pa})$. Consider:
\[
\RR(\Phi,M) = \{\pa \in M;\, \Gr_{\pa}(\Phi) = \Fr_{\pa}(\Phi) \},
\]
which is called the set of \emph{regular} (in the sense of Gabrielov) points of $\Phi$. We start by proving a new proof of the following result:

\begin{theorem}[Paw\l ucki Theorem I, {\cite{Paw}}]\label{thm:PawI}
Let $\Phi: M \lgm N$ be an analytic map between connected manifolds. Then $M\setminus\RR(\Phi,M)$ is a proper analytic subset of $M$.
\end{theorem}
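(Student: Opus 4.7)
The plan is as follows. The function $a \mapsto \Gr_a(\Phi)$ is lower semi-continuous on $M$, and since $M$ is connected, it is in fact constant on $M$, equal to some integer $r$. The open dense subset $U := \{a \in M : \operatorname{rank}(d\Phi)_a = r\}$ has complement $Y := M \setminus U$ a proper analytic subset, defined by the vanishing of all $r\times r$ minors of the Jacobian of $\Phi$. By the constant rank theorem applied on $U$, the morphism $\Phi$ is locally conjugate to a linear projection, so $\ker \wdh{\Phi^*_a}$ is generated (even in the formal completion) by the convergent defining equations; in particular $\Fr_a(\Phi) = r$ on $U$, hence $U \subseteq \RR(\Phi,M)$, and we obtain the inclusion $M \setminus \RR(\Phi, M) \subseteq Y$.

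The content of the theorem is therefore to show that $M \setminus \RR(\Phi, M)$ is itself analytic, not merely contained in the analytic set $Y$. I would fix a point $a_0 \in Y$ and prove that the non-regular locus is cut out by analytic equations in a neighborhood of $a_0$. After a generic linear change of coordinates on source and target (preserving $a_0$), one may arrange that $\Phi^*_{a_0}(x_1), \ldots, \Phi^*_{a_0}(x_r)$ are formally algebraically independent over $\K\{\ub\}$ and that each remaining $\Phi^*_{a_0}(x_j)$, for $j > r$, satisfies a formal minimal polynomial $P_j \in \K\lb \Phi^*_{a_0}(x_1), \ldots, \Phi^*_{a_0}(x_r) \rb [T]$. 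By Gabrielov's rank Theorem, $a_0 \in \RR(\Phi,M)$ is equivalent to each $P_j$ having in fact convergent coefficients. The role of the Eisenstein rank Theorem is then to translate the assertion ``$P_j$ has convergent coefficients'' into analytic conditions on the parameter $a$: expressing $\Phi^*_a(x_j)$ as an Eisenstein series over the independent coordinates furnishes a canonical convergent candidate $P_j^{\mathrm{an}}$ for the coefficients of $P_j$, and the theorem controls the formal/analytic discrepancy $P_j - P_j^{\mathrm{an}}$ by data that varies analytically with $a$. The non-regular locus near $a_0$ is then the common vanishing set of these finitely many analytic discrepancies.

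The main obstacle is the uniform control of this construction as $a$ varies in $Y$: the splitting into independent versus dependent coordinates, the Eisenstein presentation, and the extracted analytic witnesses must all be made to vary analytically with $a$, after perhaps passing to finitely many coordinate charts covering a neighborhood of $a_0$. Securing this parametric control is precisely what the Eisenstein rank Theorem is designed to provide, and it is here that the delicate algebraic work lies; once done, the local analytic descriptions glue to exhibit $M \setminus \RR(\Phi, M)$ as a proper analytic subset of $M$.
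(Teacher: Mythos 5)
Your proposal correctly identifies the easy half (the open dense set $U$ where the Jacobian has maximal rank is contained in $\RR(\Phi,M)$, so $M\setminus\RR(\Phi,M)\subset Y$), and you have the right intuition about what the Eisenstein rank Theorem delivers: over a parameter space, once one germ of the family is regular, the kernel of the formal morphism is generated by Weierstrass polynomials with Eisenstein coefficients, which specialize to convergent power series at each parameter outside a proper analytic set. But from there the plan does not close.

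The gap is in the step where you claim to ``cut out the non-regular locus by the vanishing of analytic discrepancies $P_j - P_j^{\mathrm{an}}$.'' The formal minimal polynomial $P_j$ at a nearby parameter $a$ is not an analytic object: it is defined via the kernel of $\wdh{\Phi^*_a}$, and there is no a priori analytic family of formal polynomials to compare against the Eisenstein candidate. What the Eisenstein rank theorem actually gives is weaker and of a different nature: within a given connected parameter family $\Lambda$ (with one regular germ), the regular locus contains $\Lambda\setminus Z$ for a proper analytic $Z\subset\Lambda$. That is a density statement inside $\Lambda$, not an equation for $\Lambda\setminus\RR$; the non-regular locus could be any subset of $Z$. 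So the direct local-equations strategy does not reach the conclusion.

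The paper routes around this with two ingredients you do not mention, and their combination is essential. First, the density statement is iterated over irreducible analytic subsets (via rule (R) in Proposition \ref{prop:count_ana}) to show $\Om\setminus\RR(\Phi,\Om)$ is a \emph{countable} union of analytic subsets. Second, uniformization of subanalytic sets is used together with Theorem \ref{lem:AlgebraGeometryHook} and an induction on dimension to show that $\RR(\Phi,\Om)$ is \emph{subanalytic}. Then Lemma \ref{lem:Paw3} (a Baire-category argument pushed through a proper map from the uniformization) upgrades ``subanalytic and a countable union of analytic sets'' to ``analytic.'' Without this countable-union plus subanalyticity structure — neither of which appears in your plan — there is no mechanism to conclude analyticity of $M\setminus\RR(\Phi,M)$, and the proposal as written does not constitute a proof.
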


The proof is given in $\S$\ref{sec:PawI}. The idea is to combine the uniformization Theorem (see e.g. \cite[Theorem 0.1]{BMihes}) with a new commutative algebra result, that is, the rank Theorem for W-temperate families \cite[Theorem 1.1]{BCR2} applied to Eisenstein power series, see $\S\S$\ref{ssec:Eisenstein}. Eisenstein power series have been systematically employed in the study of families of singularities, see e.g. \cite{ZarEisenstein,HirEqui,PP21}, going back at least to works of Zariski \cite[pg. 502]{ZarEisenstein}, and they play a crucial role in our paper.

\medskip

Let us now specialize our presentation to $\K =\R$, and we refer to $\S\S$\ref{sec:subanalytic} and $\S\S$\ref{sec:Nash} for all the details of the following discussion. Let $X \subset M$ be a subanalytic set. Given a point $\pa\in M$, we denote by $X_{\pa}$ the germ set of $X$ at $\pa$. We say that an equidimensional subanalytic set $X$ is a \emph{Nash set} at $\pa \in M$ (which might not belong to $X$) if there exists a germ $Y_{\pa}$ of semi-analytic set at $\pa$ such that $X_{\pa} \subset Y_{\pa}$ and $\dim(X_\pa)=\dim(Y_\pa)$. More generally, a subanalytic set $X \subset M$ of dimension $d$ is Nash at a point $\pa \in M$, if $X$ is a union of equidimensional Nash sets $\Sigma^{(k)}$, where $k=0$, \ldots, $d$. We consider the sets:
\[
\begin{aligned}
\NN(X)& :=\{\pa\in M\mid X_\pa \text{ is the germ of a Nash set}\}\\
\SA(X)&:=\{\pa\in M\mid X_\pa \text{ is the germ of a semianalytic  set}\}.
\end{aligned}
\]
It is trivially true that $M\setminus \ovl X \subset \SA(X)\subset \NN(X)$. But in general, $\SA(X) \neq  \NN(X)$, see example \ref{ex:SNneqNN} below. Now, by combining Theorem \ref{thm:PawI} with the uniformization Theorem, and following an argument from \cite{BMFourier}, we provide a new proof of the following result:

\begin{theorem}[Paw\l ucki Theorem II, {\cite{Pawthesis}}]\label{thm:PawII}
Let $X$ be a subanalytic set of a real analytic manifold $M$. Then
\begin{itemize}
\item[i)] The sets $\NN(X)$ and $\SA(X)$ are subanalytic.
\item[ii)] $\dim(M\setminus \NN(X))\leq \dim(M\setminus\SA(X))\leq \dim(X)-2$.
\end{itemize}
In particular, if $\dim(X)\leq 1$, then $\NN(X)=\SA(X)=M$.
\end{theorem}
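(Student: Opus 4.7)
My plan is to combine the uniformization theorem with Theorem \ref{thm:PawI}, following a strategy in the spirit of \cite{BMFourier}. First, I would reduce to the case where $X$ is equidimensional of dimension $d$: decompose $X$ into its equidimensional parts $\Sigma^{(k)}$ and treat each separately, using that finite unions of subanalytic, semianalytic, and Nash sets remain of the same type. For such $X$, apply the uniformization theorem to obtain a proper real-analytic map $\Phi : W \to M$ with $\dim W = d$ and $\Phi(W) = \overline{X}$.

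The key observation is that at any $\pb$ in the closed set $W^{(d)} := \{\pb \in W : \Gr_\pb(\Phi) = d\}$, the map $\Phi$ is automatically regular (as $\Gr_\pb = d$ forces $\Fr_\pb = d$), so by Gabrielov's rank Theorem $\Ar_\pb = d$, and the local image of $\Phi$ near $\pb$ lies inside the analytic germ of dimension $d$ at $\Phi(\pb)$ cut out by $\Ker(\Phi^{\ast}_\pb)$. On the complement $\{\Gr(\Phi) < d\}$, the generic rank of $\Phi$ drops, so $Y := \overline{\Phi(\{\Gr(\Phi) < d\})}$ is a subanalytic set of dimension at most $d-1$. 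Apply the uniformization theorem to $Y$ to obtain a proper real-analytic map $\Psi : W' \to M$ with $\dim W' \leq d-1$ and $\Psi(W') = Y$. By Theorem \ref{thm:PawI}, $W' \setminus \RR(\Psi, W')$ is a proper analytic subset of $W'$ of dimension at most $d-2$, so $B := \Psi(W' \setminus \RR(\Psi, W'))$ is a subanalytic subset of $M$ of dimension at most $d-2$. For $\pa \notin B$, every preimage of $\pa$ in $W'$ is a regular point of $\Psi$, so by the same argument $Y_\pa$ lies inside an analytic germ of dimension at most $d-1$ at $\pa$. Since $X_\pa \subset \Phi(W)_\pa = \Phi(W^{(d)})_\pa \cup Y_\pa$, taking the union with the (finite) analytic germ obtained from the preimages in $W^{(d)}$ produces a semianalytic germ of dimension at most $d$ containing $X_\pa$, which (using that $\dim X_\pa = d$ for $\pa \in \overline{X}$) shows that $X_\pa$ is Nash, whence $M \setminus \NN(X) \subset B$ has dimension at most $d-2$.

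For semianalyticity of $X_\pa$ itself, and not merely its containment in a semianalytic germ of the same dimension, I would proceed by induction on $d$, with base case $d \leq 1$ supplied by the classical fact that one-dimensional subanalytic sets are semianalytic. At step $d$, the induction hypothesis applied to $Y$ (of dimension $\leq d-1$) yields semianalyticity of $Y_\pa$ outside a subanalytic set of dimension $\leq d-3$; combined with the analytic ambient for $\Phi(W^{(d)})_\pa$, this gives semianalyticity of $X_\pa$ outside a subanalytic set of dimension $\leq d-2$. The subanalyticity of both $\NN(X)$ and $\SA(X)$ themselves follows from tracking the explicit subanalytic descriptions of each step. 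The main obstacle, I expect, is precisely this last point: ensuring that the ambient analytic germs produced by the construction assemble into a subanalytic family as $\pa$ varies, so that $\NN(X)$ and $\SA(X)$ themselves (not merely their complements) are subanalytic.
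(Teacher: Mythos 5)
Your argument rests on the ``key observation'' that $\Gr_{\pb}(\Phi)=d=\dim W$ forces $\Fr_{\pb}(\Phi)=d$, and this is false: there is no general inequality $\Fr(\phi)\leq m$ (the dimension of the source), only $\Gr(\phi)\leq\Fr(\phi)\leq\Ar(\phi)\leq n$ and $\Gr(\phi)\leq\min(m,n)$. The classical Osgood example \cite{Os}, already invoked in Example \ref{ex:SNneqNN} of the paper, is a counterexample: for $\phi\colon(\K^2,0)\to(\K^3,0)$, $(u,v)\mapsto(u,uv,uve^v)$, one has $\Gr(\phi)=2=\dim(\text{source})$ but $\Fr(\phi)=\Ar(\phi)=3$, so $\phi$ is \emph{not} regular even though its generic rank equals the source dimension. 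Consequently $W^{(d)}$ need not consist of regular points, the step ``so the local image of $\Phi$ near $\pb$ lies inside the analytic germ of dimension $d$'' fails, and your decomposition quietly discards precisely the hard case. Indeed if your observation were correct, the image of the (properly restricted) Osgood map would be Nash at the origin, which is exactly what it is the standard example of not being. This is not a fixable bookkeeping error: the entire reason the theorem is nontrivial is that non-regular points occur on generically immersive components, and the content of Proposition \ref{prop:mainPaw} is precisely to control them.

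The paper handles this genuine difficulty with two ingredients you do not have. First, it works with the generically immersive uniformization $\varphi\colon N\to M$ of Corollary \ref{cor:uniformizationSubA} and establishes the \emph{equality} $X\setminus\NN(X)=\varphi(N\setminus\RR(\varphi,N))$ (Lemma \ref{cor:subAnalyticNashRegCondition}); combined with Theorem \ref{thm:PawI}, which shows $N\setminus\RR(\varphi,N)$ is a proper analytic subset, this gives at once that $\NN(X)$ is subanalytic — the very point you flag at the end as ``the main obstacle'' but leave unresolved. Second, to improve the naive codimension-one bound coming from Theorem \ref{thm:PawI} to codimension two, the paper applies resolution of singularities to make the critical locus $F$ of $\varphi$ a normal crossings divisor and then, at a generic point of each $(d-1)$-dimensional component $E$ of $F$ with $\Gr(\varphi|_E)=d-1$, exhibits explicit analytic relations $P_k(x,y,z)=\prod_{i=1}^{a}(z_k-y\,g_k(x,\xi^i y^{1/a}))$ annihilated by $\varphi^\ast$ (a Puiseux/roots-of-unity construction), proving $\varphi$ is regular there; hence $N\setminus\RR(\varphi,N)$ meets each such $E$ in a proper analytic subset, giving $\dim\varphi(N\setminus\RR(\varphi,N))\leq d-2$. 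Your sketch of the $\SA(X)$ part also leans on the false observation (via the ``analytic ambient for $\Phi(W^{(d)})_\pa$'') and is not repaired by the induction on $Y$; compare the careful dimension bookkeeping via Lemmas \ref{lem:decomp_SA} and \ref{lem:decomp_SA2} in the paper.
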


\begin{remark}
The case of $\dim(X)\leq 1$ was originally proved by \L ojasiewicz \cite{Loj} and an alternative proof is given in \cite[Theorem 6.1]{BMihes}.
\end{remark}

The proof is given in $\S$\ref{sec:PawII}. The original proof of Theorem \ref{thm:PawII} given in \cite{Pawthesis} is an intricate construction between geometrical, algebraic, and analytic arguments, which we do not fully understand. Paw\l ucki then deduces Theorem \ref{thm:PawI} from Theorem \ref{thm:PawII} in \cite{Paw}.  Our proof of these results relies heavily on algebraic arguments, namely on \cite[Theorem 1.1]{BCR2} and the use of Eisenstein power series, see $\S\S$\ref{ssec:Eisenstein} instead of geometric and analytic arguments as in \cite{Pawthesis}. Our use of geometric techniques is essentially reduced to the extension Lemma \ref{cor_linear_equations} together with the use of the Uniformization Theorem of Hironaka \cite{HirPisa}; the former has been inspired from the work of Paw\l ucki \cite[Lemme 6.3]{Pawthesis}, while the later is not used in \cite{Pawthesis,Paw}.

\bigskip

We would like to thank Edward Bierstone for bringing the topic of this paper to our attention and for useful discussions. This work was supported by the CNRS project IEA00496  PLES. The first author is supported by the project ``Plan d’investissements France 2030", IDEX UP ANR-18-IDEX-0001. The second author thanks the grant NKFIH KKP 126683.

\section{Preliminaries in analytic and subanalytic geometry}

\subsection{Analytic set and spaces} Let $\K=\R$ or $\C$ and fix an analytic manifold $M$.

\begin{definition}[Real-analytic set]
A subset $X$ of $M$ is analytic if each point of $ M$ admits a neighborhood $U$ and an analytic function $f\in \mathcal{O}(U)$ such that:
\[
X\cap U =  \{\pa\in U; \,f(\pa)=0\}.
\]
We say that $X$ is an analytic set generated by global sections in $\mathcal{O}(M)$ if we can take $U=M$. 
\end{definition}

\begin{definition}[{cf. \cite[Ch. V, Def 6]{GR}}]
A (coherent) $\K$-analytic space is a locally ringed space $(X,\mathcal{O}_X)$, where:
\begin{itemize}
\item[(1)] $X$ is a Hausdorff topological space and $\mathcal{O}_X$ is a coherent sheaf of functions,
\item[(2)] at each point $\pa$ of $X$ there is a neighborhood $U$ such that $(U,\mathcal{O}_X|_U)$ is isomorphic to a ringed space $(Y,\mathcal{O}_Y)$ where $Y$ is an analytic subset of an open set $V \subset \K^n$ and $\O_Y$ is its sheaf of analytic functions. That is, there exist $\K$-analytic functions $(f_1,\ldots,f_d) \in \mathcal{O}(V)$ such that:
\[
Y = \{\pa \in V ;\, f_k(\pa)=0, \, k=1\ldots,d\} \quad \text{ and } \quad \mathcal{O}_Y = \mathcal{O}_V /(f_1,\ldots,f_d).
\]
\end{itemize}
A subspace of $(X,\mathcal{O}_X)$ is an analytic space $(Z,\mathcal{O}_Z)$ such that $Z \subset X$ and the inclusion $i: Z \lgw X$ is an injection that is, an injective map such that $i^{\ast}:\mathcal{O}_X \lgw\mathcal{O}_Z$ is surjective.
\end{definition}

If $\K =\R$, then it is not true that every $\R$-analytic set $X$ admits the structure of a $\R$-analytic space, as illustrated by examples of Cartan, see e.g. \cite[Ch. V, $\S$3]{n66}. In contrast, if $\K=\C$, then every $\C$-analytic set $X$ admits the structure of $\C$-analytic space, essentially by a Theorem of Oka, see e.g. \cite[Ch. VII, Th 7.1.5]{Hor}. We refer to \cite[page 155]{GR} for a definition of \emph{irreducible} complex analytic subspace $X \subset M$, and we recall that if $X$ is irreducible then it is not the union of two proper complex analytic sets $Y,Z\, \subset M$, that is, if $X = Y\cup Z$ then either $Y = X$ or $Z = X$. 

\begin{remark}\label{rk:Irreducible}
Note that if $X \subset \Omega \subset \C^n$ is an irreducible complex analytic set generated by global sections in a connected open set $\Om$, then the ring $\mathcal{O}(X)$ is an integral domain.
\end{remark}

\subsection{Semianalytic and Subanalytic sets}\label{sec:subanalytic}
We follow the presentation of \cite{BMihes}. Fix a real analytic manifold $M$.

\begin{definition}[Semianalytic set]
A subset $X$ of $M$ is semianalytic if each point of $ M$ admits a neighborhood $U$ and analytic functions $f_i \in \mathcal{O}(U)$ and $g_{i,j} \in \mathcal{O}(U)$ for $i =1$, \ldots, $p$ and $j=1$, \ldots, $q$ such that:
\[
X\cap U = \bigcup_{i=1}^p \{\pa\in U; \,f_i(\pa)=0,\, g_{i,j}(\pa)>0,\, 1,\ldots,q\}.
\]
\end{definition}

\begin{definition}[Subanalytic set]
A subset $X$ of $M$ is subanalytic if each point of $ M$ admits a neighborhood $U$ such that $X \cap U$ is the projection of a relatively compact semi-analytic set.
\end{definition}

The following is an important general example of subanalytic set:

\begin{example}\label{ex:SubanalyticProjection}
Let $\varphi : N \lgw M$ be a proper analytic map. The image $X=\varphi(N)$ is a subanalytic set of $M$. Indeed, note that the graph $\Gamma(\varphi) \subset M \times N$ is a closed analytic set and that the set $X$ is the projection of $\Gamma(\varphi)$ onto $M$, that is, the image of $\Gamma(\varphi)$ by the projection $\pi:M \times N \lgw M$. It is now enough to remark that since $\varphi$ is proper, given a relatively compact set $U \subset M$, the intersection $\pi^{-1}(U) \cap \Gamma(\varphi)$ is relatively compact.
\end{example}

\begin{definition}
A subset $X$ of $\R^n$ is \emph{finitely subanalytic} if its image under the map
$$\pi_n:\x\in\R^n\lgm \left(\frac{x_1}{\sqrt{1+\|x\|^2}},\ldots, \frac{x_n}{\sqrt{1+\|x\|^2}}\right)\in\R^n$$
is subanalytic.
\end{definition}

\begin{remark}
Because $\pi_n$ is a semialgebraic diffeomorphism, every finitely subanalytic subset of $\R^n$ is subanalytic, but the converse is not true in general: for instance
$$X=\{(t,\sin(t))\mid t\in\R\}$$
is subanalytic but not finitely subanalytic.
\end{remark}

Let $X$ be a subanalytic set. We say that $X$ is smooth (of dimension $d$) at a point $\pa \in X$ if there exists a neighborhood $U$ of $\pa$ where $X \cap U$ is an analytic sub-manifold (of dimension $d$). The dimension of $X$ is defined as the highest dimension of its smooth points, c.f. \cite[Remark 3.5]{BMihes}. Given a subanalytic (respectively, semianalytic) set $X$ and a number $k \in \mathbb{N}$, the set of all smooth points of $X$ of dimension $k$, which we denote by $X^{(k)}$, is subanalytic \cite{Tamm}, \cite[Theorem 7.2]{BMihes} (respectively, semianalytic \cite[Remark 7.3]{BMihes}). The set of pure dimension $k$ of $X$ is the set $\Sigma^{(k)} = \overline{X^{(k)}}\cap X$, which is subanalytic. If there exists $d\in \mathbb{N}$ such that $X = \Sigma^{(d)}$, we say that $X$ has pure dimension $d$. Note that $X = \cup_{k =0}^d \Sigma^{(k)}$, where $d$ is the dimension of $X$.

\begin{example}
Let $M = \R^3$ endowed with coordinate system $(x,y,z)$, and consider the Whitney umbrella $X =\{ x^2 - zy^2 =0\} \subset \R^3$. Then:
\[
\Sigma^{(2)} = \{ x^2 - zy^2 =0 \text{ and } z\geq 0\}, \quad \Sigma^{(1)} = \{x=y=0, \text{ and } z\leq 0\}.
\]
Note that their intersection is non-empty.
\end{example}

We now recall a classical result about subanalytic sets due to Hironaka \cite{HirPisa}; we follow the presentation of \cite[Theorem 0.1]{BMihes}:

\begin{theorem}[Uniformization Theorem I]\label{thm:uniformaizionSubA} Let $X \subset M$ be a closed subanalytic set of dimension $d$. There exists an analytic manifold $N$ of dimension $d$ and a proper analytic map $\varphi: N \lgw M$ such that $\varphi(N) =X$.
\end{theorem}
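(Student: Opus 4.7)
The plan is to reduce the result to Hironaka's desingularization theorem for analytic sets, combined with a careful analysis of the semianalytic structure underlying any subanalytic set, followed by a dimension-pruning argument.

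First, I would exploit the definition of subanalytic sets directly: each point $\pa \in M$ admits an open neighborhood $U_{\pa}$, an auxiliary open set $V_{\pa} \subset \R^{k_{\pa}}$, and a relatively compact semianalytic set $S_{\pa} \subset U_{\pa} \times V_{\pa}$ such that $X \cap U_{\pa}$ equals the projection $\pi_{\pa}(S_{\pa})$. Taking a locally finite refinement of such a cover, it suffices to construct a uniformization of each $S_{\pa}$ by a proper analytic map from a manifold, and then compose with $\pi_{\pa}$, which is proper on relatively compact sets. The disjoint union of these local uniformizations will be proper because $X$ is closed and the cover is locally finite.

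Second, for a relatively compact semianalytic set $S \subset W$, I would write $S$ as a finite union of basic pieces of the form $\{f = 0,\ g_1 > 0,\ldots, g_s > 0\}$. I would then apply Hironaka's embedded resolution of singularities simultaneously to the analytic hypersurfaces $\{f = 0\}$ and $\{g_j = 0\}$, producing a proper analytic map $\sigma: W' \lgw W$ such that the strict transform of $\{f=0\}$ is a smooth subvariety and the divisors $\{g_j \circ \sigma = 0\}$ are normal crossings. In suitable local coordinates, each pullback $g_j \circ \sigma$ is a monomial times a unit, so the preimage $\sigma^{-1}(\{g_j > 0\})$ is a union of orthants. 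Consequently, $\sigma^{-1}(S)$ decomposes into a controlled union of closed analytic submanifolds (with corners given by coordinate hyperplanes), and further blow-ups of the boundary divisors yield a smooth manifold mapping properly and surjectively onto $S$.

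Third, the delicate step is to enforce $\dim N = \dim X = d$. The resolution produces a smooth model whose dimension is that of the ambient space $W$, generally larger than $d$. I would address this by stratifying $X$ into pure-dimensional subanalytic pieces $\Sigma^{(k)}$ (using \cite[Theorem 7.2]{BMihes}) and treating each $\Sigma^{(k)}$ separately, at the end taking the disjoint union of the resulting manifolds. For each $k$, after the above resolution I would retain only those smooth strata whose image under $\sigma$ composed with $\pi_{\pa}$ has dimension exactly $k$; the closures of these strata in $W'$ form an equidimensional analytic set, to which a further desingularization yields a manifold $N_k$ of the correct dimension. The main obstacle is verifying that this dimension-selection procedure preserves the properness and surjectivity of the final map onto $\Sigma^{(k)}$: properness follows from the fact that $\Sigma^{(k)}$ is closed subanalytic in $M$ and each local construction is relatively compact, while surjectivity onto $\Sigma^{(k)}$ (rather than merely a dense subset) requires the closedness of the image of a proper map together with the density of smooth points inside each pure-dimensional stratum.
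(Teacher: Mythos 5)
The paper does not prove this theorem: it is explicitly stated as a classical result of Hironaka, citing \cite{HirPisa} and \cite[Theorem 0.1]{BMihes}, and no argument is given in the text. So there is no in-paper proof to compare against; what follows is an assessment of your proposal against the standard Bierstone--Milman argument that the paper defers to.

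Your overall strategy (reduce to the local semianalytic picture, resolve the defining functions, and then cut the dimension down to $d$) is in the spirit of the known proofs, but two steps contain genuine gaps. First, in your second step you claim that once the $g_j \circ \sigma$ are monomial times unit, ``further blow-ups of the boundary divisors yield a smooth manifold mapping properly and surjectively onto $S$.'' This is not correct as stated: the closure of $\sigma^{-1}(\{g_1>0,\ldots,g_s>0\})$ is locally a closed orthant $\{y_1\geq 0,\ldots,y_k\geq 0\}$, hence a manifold with corners, and blowing up its boundary divisors does not turn it into a smooth manifold. One needs an entirely different device (e.g., the folding map $(y_1,\ldots,y_n)\mapsto(y_1^2,\ldots,y_k^2,y_{k+1},\ldots,y_n)$ from $\R^n$ onto the orthant, glued appropriately, or a doubling construction). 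This is precisely one of the places where the Bierstone--Milman proof does careful work.

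Second, and more seriously, the dimension-pruning in your third step is not a complete argument. After resolution you have a proper map from a manifold whose dimension is that of the ambient $U_\pa\times V_\pa$, far larger than $d$. You propose to keep only the strata whose image has dimension $k$, take their closures, and desingularize, but you give no reason why the closures of those retained strata form a closed analytic subset of $W'$ of dimension exactly $k$ whose image under the composite map is all of $\Sigma^{(k)}$; the image could a priori miss a positive-dimensional piece, and the closures themselves could have higher-dimensional components. What is needed here is a genuine fiber-cutting statement: given a proper analytic surjection $\psi:N'\to X$ with $\dim N'>\dim X$, there is a closed analytic subset $N''\subset N'$ with $\dim N''=\dim X$ and $\psi(N'')=X$. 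This is a nontrivial lemma (it is a cornerstone of the Bierstone--Milman proof), and invoking ``density of smooth points inside each pure-dimensional stratum'' does not substitute for it. Without such a lemma, your construction does not produce a source manifold of dimension $d$, only one of ambient dimension, which is strictly weaker than the statement.
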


In what follows, we use the following variant of the above result:

\begin{theorem}[Uniformization Theorem II]\label{cor:uniformizationSubA}
Let $X \subset M$ be a closed subanalytic set of dimension $d$. There exists $d+1$ analytic manifolds $N_k$, where $k=0,\ldots,d$, where the dimension of $N_k$ is equal to $k$, and $d+1$ proper and generically immersive analytic maps $\pi_k:N_k  \lgw M$ such that $\pi_k(N_k) = \Sigma^{(k)}$. 
\end{theorem}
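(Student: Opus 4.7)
The plan is to reduce to the classical Uniformization Theorem I (Theorem \ref{thm:uniformaizionSubA}) applied individually to each stratum $\Sigma^{(k)}$, and then upgrade the resulting map to a generically immersive one by discarding components of the source on which the map has everywhere deficient rank.

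First, since $X$ is closed, each $\Sigma^{(k)} = \overline{X^{(k)}} \cap X$ is a closed subanalytic subset of $M$ of pure dimension $k$. Apply Theorem \ref{thm:uniformaizionSubA} to each $\Sigma^{(k)}$ to obtain a $k$-dimensional analytic manifold $\wdt{N}_k$ together with a proper analytic map $\wdt{\pi}_k : \wdt{N}_k \lgw M$ such that $\wdt{\pi}_k(\wdt{N}_k) = \Sigma^{(k)}$.

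Next, for each connected component $C$ of $\wdt{N}_k$, the set of points where $\mathrm{rank}(d\wdt{\pi}_k) < k$ is an analytic subset of $C$ (being defined by the vanishing of $k \times k$ minors). By the constant rank theorem, if this locus equals all of $C$, then $\wdt{\pi}_k(C)$ has dimension strictly less than $k$; conversely, whenever $\wdt{\pi}_k(C)$ has dimension $k$, the immersive locus is a nonempty open set, whose complement is a proper analytic subset of $C$, so $\wdt{\pi}_k$ is generically immersive on $C$. Let $N_k \subset \wdt{N}_k$ be the union of those components on which $\dim \wdt{\pi}_k(C) = k$ (equivalently, on which $\wdt{\pi}_k$ is generically immersive), and set $\pi_k := \wdt{\pi}_k|_{N_k}$. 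Since $N_k$ is a union of connected components it is both open and closed in $\wdt{N}_k$, hence $\pi_k$ is still a proper analytic map between manifolds of dimension $k$, and by construction it is generically immersive.

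It remains to verify that $\pi_k(N_k) = \Sigma^{(k)}$. On the one hand, the image of the discarded components is a subanalytic set of dimension strictly less than $k$, so $\Sigma^{(k)} \setminus \pi_k(N_k)$ is a subanalytic subset of $\Sigma^{(k)}$ of dimension $< k$. Because $\Sigma^{(k)}$ has pure dimension $k$, the set $\pi_k(N_k)$ is dense in $\Sigma^{(k)}$. On the other hand, proper maps between locally compact Hausdorff spaces are closed, so $\pi_k(N_k)$ is closed in $M$. A closed subset of $\Sigma^{(k)}$ which is dense in $\Sigma^{(k)}$ must coincide with $\Sigma^{(k)}$. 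This completes the construction.

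The only step requiring any care is the last one: one must make sure that discarding the ``bad'' components of $\wdt{N}_k$ does not remove any point of $\Sigma^{(k)}$ from the image, and this is precisely where the pure dimensionality of $\Sigma^{(k)}$ together with properness of $\pi_k$ is used.
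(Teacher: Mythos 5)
Your proposal is correct and follows essentially the same strategy as the paper's proof: reduce to the equidimensional strata $\Sigma^{(k)}$, apply Uniformization Theorem I, discard the connected components of the source on which the generic rank is deficient, and recover all of $\Sigma^{(k)}$ from the retained components using properness (closedness of the image) plus the fact that the discarded image has dimension $<k$. You spell out the final closure/density step a bit more explicitly than the paper, but the argument is the same.
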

\begin{proof}
It is enough to prove the result when $X$ is an equidimensional subanalytic set, that is, when $X = \Sigma^{(d)}$. Let $\varphi: N \lgw M$ be the proper analytic map given by Theorem \ref{thm:uniformaizionSubA} such that $\varphi(N) =X$. We note that $N = \cup_{\iota \in I} N_{\iota}$ where each $N_{\iota}$ is a connected manifold and $I$ is an index set. Denote by $\phi_{\iota} := \varphi|_{N_{\iota}}:N_{\iota} \lgw M$. Note that the generic rank of $\varphi$ is constant along connected components of $N$, and denote by $r_{\iota}$ the generic rank associated to each $\varphi_{\iota}$. Let $J \subset I$ be the subindex set of $\iota \in I$ such that $r_{\iota} = d$; since $\varphi(N) =X$ is of dimension $d$, we conclude that $J \neq \emptyset$ and that $r_{\iota} < d$ for every $\iota \in I \setminus J$. We consider the manifold $N_d = \cup_{\iota \in J} N_{\iota}$ and the associated proper analytic morphism $\varphi_{d} : N_d \lgw M$, which we claim to satisfy all properties of the Theorem.

Indeed, we start by noting that $X \setminus \varphi_d(N_d)$ is a subanalytic set of dimension smaller than $d$ and, therefore, the closure of $\varphi_{d}(N_d) $ is equal to $X$. Since $\varphi$ is proper and continuous, we conclude that $\varphi_d(N_d) = X$. It is now enough to prove that the mapping is generically immersive. This easily follows from the fact that $\varphi$ is generically of the same rank as the dimension of $N_d$.
\end{proof}

We finish this section with a sufficient condition for a subanalytic to be analytic: 

\begin{lemma}[{\cite[Lemma 3]{Paw}}]\label{lem:Paw3}
Let $X \subset M$ be a subanalytic set which is a union of countably many analytic subsets. Then $X$ is an analytic set.
\end{lemma}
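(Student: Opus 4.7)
The plan is to prove the statement locally: for each $\pa\in M$, I aim to show that $X$ is analytic in some neighborhood of $\pa$, which by definition will give that $X$ is analytic in $M$. Fix such a $\pa$ and a relatively compact open neighborhood $U\ni\pa$. By the \L ojasiewicz--Hironaka stratification theorem for subanalytic sets, $X\cap U$ decomposes as a finite disjoint union
\[
X\cap U=\bigsqcup_{\alpha=1}^N S_\alpha
\]
of connected smooth subanalytic submanifolds $S_\alpha$ of $U$; in particular each $S_\alpha$ is a connected real-analytic submanifold of $U$.

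For each stratum, the hypothesis $X=\bigcup_{i\in\N} Y_i$ yields the countable covering
\[
S_\alpha=\bigcup_{i\in\N}(Y_i\cap S_\alpha),
\]
where each $Y_i\cap S_\alpha$ is a closed analytic subset of the manifold $S_\alpha$. Since $S_\alpha$ is locally compact Hausdorff, it is a Baire space, so at least one piece $Y_{i(\alpha)}\cap S_\alpha$ must have nonempty interior in $S_\alpha$. The identity principle on the connected real-analytic manifold $S_\alpha$ then upgrades this to $Y_{i(\alpha)}\cap S_\alpha=S_\alpha$, that is, $S_\alpha\subset Y_{i(\alpha)}$.

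Summing over $\alpha$ and using that each $Y_i$ is contained in $X$, I obtain
\[
X\cap U=\bigsqcup_{\alpha=1}^N S_\alpha\subset\bigcup_{\alpha=1}^N Y_{i(\alpha)}\subset X,
\]
so $X\cap U=\bigcup_{\alpha=1}^N(Y_{i(\alpha)}\cap U)$ is a finite union of analytic subsets of $U$, hence analytic in $U$. Since $\pa\in M$ was arbitrary, $X$ is analytic in $M$. The only nontrivial input is the existence of a finite stratification of $X\cap U$ by connected smooth subanalytic submanifolds; this is the step that really uses subanalyticity (a countable family $\{1/n\}\subset\R$ shows that the conclusion fails without it). Everything else is a clean combination of Baire's theorem with real-analytic continuation.
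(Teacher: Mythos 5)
Your proof is correct and takes a genuinely different route from the paper's. Both arguments share the same engine: Baire's theorem combined with the fact that a proper analytic subset of a connected real-analytic manifold has empty interior. The difference is where the Baire argument runs. The paper first reduces to the equidimensional case $X = \Sigma^{(d)}$, invokes the Uniformization Theorem \ref{cor:uniformizationSubA} to obtain a proper analytic map $\varphi : N \lgw M$ with $\varphi(N) = \overline{X}$ and generic rank $d$, and applies Baire in $N$ on connected neighborhoods of the finitely many components of a fiber $\varphi^{-1}(\pa)$; the inclusion into finitely many $Y_k$ is then pushed forward by $\varphi$. You instead stratify $X\cap U$ locally into finitely many connected smooth subanalytic submanifolds and apply Baire plus the identity principle directly on each stratum $S_\alpha$. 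Your version avoids the uniformization machinery (which the paper needs anyway for its main theorems, so there is no global savings) at the cost of invoking the subanalytic stratification theorem; the two tools are of comparable depth, and which one feels more elementary is a matter of taste. One point worth making explicit in your write-up: you need the stratification of $X\cap U$ to be \emph{finite} with \emph{connected} strata. This does hold, because subanalytic stratifications are locally finite, so only finitely many strata meet a relatively compact subanalytic $U$, and each stratum has finitely many connected components near $\overline{U}$; but as written you pass over this. With that gloss the argument is complete and correct.
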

\begin{proof}
We claim that if $X$ is a subanalytic set contained in a union of countably many analytic subsets $(Y_k)_{k\in \mathbb{N}}$, then it is locally contained in a union of a finite number of the analytic sets $(Y_k)_{k\in \mathbb{N}}$. Note that the lemma easily follows from the claim. Since $X = \cup \, \Sigma^{(k)}$, where $\Sigma^{(k)}$ is a subanalytic equidimensional set, it is enough to prove the claim in the case that $X$ is an equidimensional set. By the uniformization Theorem \ref{cor:uniformizationSubA} there exists a proper analytic map $\varphi:N \lgw M$ such that $\varphi(N) =\overline{X}$ and $\varphi$ is generically of rank $d = \dim(X)$; the later condition implies that $\varphi^{-1}(X)$ is subanalytic set of $N$ whose interior is dense in $N$. Let us fix $\pa \in \overline{X}$; since $\varphi$ is proper, the fiber $\varphi^{-1}(\pa)$ has a finite number of connected components $T_1,\ldots, T_r$; denote by $U_1,\ldots,U_r$ connected open neighborhoods of the $T_k$. Now, given an analytic subset $Y\subset M$, its pre-image $Z =\varphi^{-1}(Y)$ is analytic in $N$. It follows that for each $k=1$, \ldots, $r$, either $Z\cap U_k =U_k$, or $Z \cap U_k$ is a closed set with empty interior in $U_k$. Since $X$ is contained in countable many analytic sets, and the union of countable many closed sets with empty interior has empty interior by Baire's Theorem, we conclude that for each $k=1,\ldots, r$, there is an analytic set $Y_k \subset X$ such that $\varphi^{-1}(Y_k) \cap U_k = U_k$. We conclude easily.
\end{proof}

\subsection{Regular locus of analytic maps} Let $\K=\R$ or $\C$. Consider an analytic map $\Phi: \Om\subset \K^m\lgw \K^n$ where $\Om$ is an open set. The set of regular points of $\Phi$ is given by:
\[
\RR(\Om) = \{\pa \in \Om;\, \Gr_{\pa}(\Phi) = \Fr_{\pa}(\Phi) \}.
\]
We recall that Gabrielov's rank Theorem \cite{Ga1,BCR} states that:
\[
 \Gr(\Phi_{\pa}) = \Fr(\Phi_{\pa}) \implies \Gr(\Phi_{\pa}) = \Fr(\Phi_{\pa}) = \Ar(\Phi_{\pa}).
\]
In particular, the set $\RR(\Om)$ is open. As a matter of fact it also contains a non-empty analytic-Zariski set:

\begin{lemma}\label{lem:induc0}
Let $\Phi: \Om\subset \K^m\lgw \K^n$ be an analytic map. Then the set
\[
\RR(\Phi):=\{\pa\in \Om\mid \Phi_\pa \text{ is regular }\}
\]
contains a set of the form $\Om\setminus Z$ where $Z$ is a proper analytic set of $\Om$ generated by global equations in $\mathcal{O}(\Om)$.
\end{lemma}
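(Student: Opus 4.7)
The plan is to take $Z$ to be the common zero locus of the maximal minors of the Jacobian matrix of $\Phi$. Let $r := \max_{\pa \in \Om} \rank(d\Phi_{\pa})$, which is a well-defined integer in $\{0,\ldots,\min(m,n)\}$, and let $Z \subset \Om$ be the set where all $r\times r$ minors of $\mbox{Jac}(\Phi) = [\partial_{u_i}\Phi_j]_{i,j}$ vanish. Each such minor is a global analytic function on $\Om$, so $Z$ is an analytic subset of $\Om$ generated by global equations in $\mathcal{O}(\Om)$, and it is proper because $r$ is attained at some point of $\Om$.

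Next I would check that $\Om\setminus Z\subset \RR(\Phi)$. For $\pa \in \Om\setminus Z$ the rank of $d\Phi$ at $\pa$ is at least $r$ (some $r\times r$ minor is nonzero at $\pa$) and at most $r$ (by definition of $r$); since the locus $\{\pa'\mid \rank(d\Phi_{\pa'})\geq r\}$ is open, the rank is constantly equal to $r$ on a neighborhood $U$ of $\pa$. By the classical analytic constant-rank theorem, there exist local analytic coordinates $(u_1,\ldots,u_m)$ centered at $\pa$ and $(x_1,\ldots,x_n)$ centered at $\Phi(\pa)$ in which
\[
\Phi(u_1,\ldots,u_m) = (u_1,\ldots,u_r,0,\ldots,0).
\]
From this normal form the generic rank is immediately $r$, while the kernel of the formal comorphism $\wdh{\Phi^{\ast}_{\pa}} : \K\lb \x\rb \lgw \K\lb \ub\rb$ is the ideal $(x_{r+1},\ldots,x_n)\,\K\lb\x\rb$, so $\Fr_{\pa}(\Phi) = \dim\K\lb x_1,\ldots,x_r\rb = r$. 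Hence $\Gr_{\pa}(\Phi) = \Fr_{\pa}(\Phi) = r$, that is $\pa \in \RR(\Phi)$.

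The only mild subtlety is when $\Om$ is disconnected and the maximum $r$ is attained only on some components: then $Z$ contains entirely the components on which the rank stays strictly below $r$. This is harmless, since the lemma only asks that $Z$ be a proper analytic subset of $\Om$ generated by global sections, not that $\Om\setminus Z$ coincide with $\RR(\Phi)$. Alternatively one can argue component by component, replacing $r$ by the generic rank on each connected component of $\Om$, and then take $Z$ to be the union of the resulting analytic subsets. I do not expect any genuine obstacle here: the constant-rank theorem trivialises the comparison of the generic and formal ranks at points where the Jacobian has locally constant rank, so the deep Gabrielov rank theorem is not needed for this lemma; it will only enter when one upgrades this statement to Paw\l ucki's Theorem I in \S\ref{sec:PawI}.
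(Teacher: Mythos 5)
Your argument is correct and follows essentially the same route as the paper's: both take $Z$ to be the common zero locus of the $r\times r$ minors of the Jacobian and invoke the constant-rank theorem to conclude regularity off $Z$, without any appeal to Gabrielov's theorem. The only differences are cosmetic — you spell out the normal form and compute $\Gr_{\pa}$ and $\Fr_{\pa}$ explicitly, and you handle disconnected $\Om$ by taking the global maximum of the rank, whereas the paper simply reduces at the outset to the connected case; both choices are sound.
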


\begin{proof}
It is enough to prove the Lemma in the case that $\Om$ is connected. Let $r$ be the generic rank of $\Phi$ and denote by $Z$ the set of points $\pa \in\Om$ where the rank of $\Phi$ is smaller than $r$. Note that $F$ is a proper analytic subset generated by global equations in $\mathcal{O}(\Om)$; indeed, it is the zero set of the $r$-minors of the Jacobian of $\Phi$. It is now enough to note that $\Phi$ is regular at every point of $\Om \setminus Z$ by the constant rank Theorem.
\end{proof}

We now recall a result that relates the regular locus of complex and real analytic morphisms due to Milman \cite{Mil1}, but which we state as in \cite{Paw}:

\begin{lemma}[{\cite[Lemma 4]{Paw}}]\label{lem:Paw4}
Let $\Phi: \Om \subset \mathbb{C}^m \lgw\mathbb{C}^n$ be a complex analytic map and denote by $\Phi^{\mathbb{R}}$ its real-analytic counterpart. Then $\RR(\Phi,\Om) = \RR(\Phi^{\mathbb{R}},\Om)$.
\end{lemma}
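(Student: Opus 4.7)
Fix $\pa \in \Om$ and, after translation, assume $\pa = 0$ and $\Phi(0) = 0$. Write $\Phi = f + ig$ with $f, g$ real analytic, and split coordinates as $x_k = u_k + iv_k$ on the source and $y_j = s_j + it_j$ on the target. The plan is to prove that both ranks exactly double under passage to the real trace:
\[
\Gr_{\pa}(\Phi^{\R}) = 2\,\Gr_{\pa}(\Phi), \qquad \Fr_{\pa}(\Phi^{\R}) = 2\,\Fr_{\pa}(\Phi).
\]
From this, $\Gr_{\pa}(\Phi) = \Fr_{\pa}(\Phi) \Longleftrightarrow \Gr_{\pa}(\Phi^{\R}) = \Fr_{\pa}(\Phi^{\R})$ is immediate, so $\RR(\Phi,\Om) = \RR(\Phi^{\R},\Om)$.

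For the generic rank, the Cauchy--Riemann equations give, in the bases $(u,v)$ and $(s,t)$, the block form
\[
\mathrm{Jac}(\Phi^{\R}) = \begin{pmatrix} A & -B \\ B & \phantom{-}A \end{pmatrix}, \qquad \mathrm{Jac}(\Phi) = A + iB,
\]
with $A = \partial f/\partial u$, $B = \partial g/\partial u$. A standard linear algebra argument (conjugating by a suitable unitary combination of $I$ and $iI$) shows that the real rank of the $2m\times 2n$ block matrix equals twice the complex rank of $A+iB$, at every point, and in particular generically. This yields the first identity.

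For the formal rank, the idea is to complexify the real formal morphism and recognize it as a tensor product of $\wdh{\Phi^*}$ with its conjugate. Precisely, by faithful flatness of $\R \to \C$,
\[
\Fr_{\pa}(\Phi^{\R}) = \dim\bigl(\C\lb s,t\rb/\Ker \wdh{(\Phi^{\R})^*_{\C}}\bigr).
\]
Introducing $y_j = s_j + it_j,\ \bar y_j = s_j - it_j$ and likewise for $x,\bar x$, we have $\C\lb s,t\rb = \C\lb y,\bar y\rb$ and $\C\lb u,v\rb = \C\lb x,\bar x\rb$. Since $\Phi$ is holomorphic, $\overline{\Phi(x)}$ depends only on $\bar x$, so the complexified morphism acts by
\[
y_j \longmapsto \Phi_j(x), \qquad \bar y_j \longmapsto \overline{\Phi}_j(\bar x),
\]
i.e.\ it is the completed tensor product of $\wdh{\Phi^*}:\C\lb y\rb \to \C\lb x\rb$ and its conjugate $\wdh{\overline{\Phi}^*}:\C\lb \bar y\rb \to \C\lb \bar x\rb$. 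Its image is therefore isomorphic to
\[
\bigl(\C\lb y\rb/\Ker \wdh{\Phi^*}\bigr)\,\wdh{\otimes}_{\C}\,\bigl(\C\lb \bar y\rb/\Ker \wdh{\overline{\Phi}^*}\bigr),
\]
which, by the standard additivity of Krull dimension under completed tensor products of complete local Noetherian $\C$-algebras with residue field $\C$, has dimension $\Fr(\Phi) + \Fr(\overline{\Phi}) = 2\,\Fr(\Phi)$.

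The main technical point is justifying the tensor product decomposition in the formal rank step: one must check that the kernel of the complexified morphism is exactly the sum of the (extensions of the) two individual kernels, which reduces to the fact that the two subrings $\C\lb \Phi(x)\rb$ and $\C\lb \overline{\Phi}(\bar x)\rb$ sit in the independent variable subrings $\C\lb x\rb$ and $\C\lb \bar x\rb$ of $\C\lb x,\bar x\rb$. Once this is granted, the equality of regular loci follows at once from the two doubling identities.
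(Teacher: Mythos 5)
Your proposal takes a genuinely different route from the paper. The paper notes that $\RR(\Phi,\Om)\subset\RR(\Phi^{\R},\Om)$ is immediate (a holomorphic formal relation yields two real ones) and then reduces the hard inclusion to a single citation of Milman's Theorem~2 in [Mi78]. You instead aim for a self-contained argument by proving both doubling identities $\Gr_{\pa}(\Phi^{\R})=2\Gr_{\pa}(\Phi)$ and $\Fr_{\pa}(\Phi^{\R})=2\Fr_{\pa}(\Phi)$, which of course yields the equality of regular loci in one stroke. Your generic-rank doubling via the block Jacobian and conjugation is correct (and even gives the equality pointwise, not just generically), and the complexification step $\Fr_{\pa}(\Phi^{\R})=\dim\bigl(\C\lb s,t\rb/\Ker\wdh{(\Phi^{\R})^*_{\C}}\bigr)$ is fine since $\C\lb s,t\rb$ is finite free of rank $2$ over $\R\lb s,t\rb$ and the kernel extends correctly by splitting into real and imaginary parts.

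However, the ``main technical point'' you flag is not as close to immediate as you suggest, and it is precisely the content of the theorem of Milman that the paper invokes. What must be shown is that for the morphism $\C\lb y,\bar y\rb\to\C\lb x,\bar x\rb$, $y\mapsto\Phi(x)$, $\bar y\mapsto\overline{\Phi}(\bar x)$, the kernel equals $I\,\C\lb y,\bar y\rb+\bar I\,\C\lb y,\bar y\rb$, where $I=\Ker\wdh{\Phi^*}$. Saying that this ``reduces to the fact that $\C\lb\Phi(x)\rb$ and $\C\lb\overline{\Phi}(\bar x)\rb$ sit in the independent variable subrings'' names a necessary ingredient but is not itself an argument: independence of the variables does not, by itself, tell you that a mixed relation in $y,\bar y$ factors through the two one-sided ideals, because the intermediate coefficients you introduce need not lie in the subring $\C\lb\Phi(x)\rb$. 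An actual proof requires, for example, factoring $\C\lb y,\bar y\rb\to\C\lb x,\bar y\rb\to\C\lb x,\bar x\rb$, showing each extension-of-scalars step preserves the kernel exactly --- which can be done by choosing a $\C$-vector-space complement of the image $R=\C\lb y\rb/I$ inside $\C\lb x\rb$, observing that its $\lb z\rb$-extension stays a $\C\lb z\rb$-module complement, and thereby proving $J\,B\lb z\rb\cap A\lb z\rb=J\,A\lb z\rb$ for $A\hookrightarrow B$. So: your structure is sound and your identification of the crucial step is accurate, but as written the argument has a gap exactly where the paper delegates to Milman; you should either spell out that injectivity argument or cite Milman's Theorem~2 directly, as the paper does.
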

\begin{proof}
The inclusion $\RR(\Phi,\Om) \subset \RR(\Phi^{\mathbb{R}},\Om)$ is immediate. In order to prove the other inclusion, suppose that $\Phi^{\mathbb{R}}$ is regular at $\pa$ and denote by $r = \Gr_{\pa}(\Phi^{\mathbb{R}})$. Since $\Phi^{\mathbb{R}}$ is the real-analytic counterpart of $\Phi$, $r = 2s$ where $s= \Gr_{\pa}(\Phi)$. The result is now immediate from \cite[Theorem 2]{Mil1}.
\end{proof}

\subsection{The Nash and the Semianalytic locus}\label{sec:Nash}

Given a subanalytic set $X \subset M$ and a point $\pa \in M$, we will denote by $X_{\pa}$ the germ set of $X$ at $\pa$.

\begin{definition}[Nash points]
Let $X \subset M$ be a subanalytic set of pure dimension $d$. We say that $X$ is a \emph{Nash set} at $\pa \in M$ (which might not belong to $X$) if there exists a germ $Y_{\pa}$ of semi-analytic set at $\pa$ such that $X_{\pa} \subset Y_{\pa}$ and $\dim(X_\pa)=\dim(Y_\pa)$. More generally, a subanalytic set $X \subset M$ of dimension $d$ is Nash at a point $\pa \in M$, if $\Sigma^{(k)}_{\pa}$ is Nash for each $k=0$, \ldots, $d$. We consider the set:
\[
\begin{aligned}
\NN(X)&:=\{\pa\in M\mid X_\pa \text{ is the germ of a Nash set}\}
\end{aligned}
\]
We say that $X$ is a Nash set if it is Nash at every point, that is, if $\NN(X) =M$. 
\end{definition}

It is clear that every semi-analytic set is Nash subanalytic.  A more general example is given by the following Lemma:

\begin{lemma}\label{ex:subAnalyticNash}
Let $\varphi : N \lgw M$ be a proper and regular analytic map, that is, at every point $\pa \in N$, $ \Gr_{\pa}(\phi)=\Fr_{\pa}(\phi) = \Ar_{\pa}(\phi)$. Suppose that $X = \phi(N)$ is equidimensional of dimension $d$. Then $X$ is Nash subanalytic.
\end{lemma}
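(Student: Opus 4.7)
The plan is to produce, at each point $\pa \in M$, an analytic (hence semianalytic) germ $V_\pa$ of dimension equal to $\dim X_\pa$ containing $X_\pa$. The construction will combine the regularity of $\varphi$ (which supplies the correct analytic germ at each point of $N$) with the properness of $\varphi$ (which allows one to cover a compact fiber by finitely many such neighborhoods and then shrink the base).

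First, for each $q \in N$ I would consider $I_q := \Ker(\varphi_q^{\ast}) \subset \mathcal{O}_{\varphi(q)}$ and let $V_q$ be the analytic germ at $\varphi(q)$ defined by $I_q$. By construction, for every sufficiently small neighborhood $U$ of $q$ the germ of $\varphi(U)$ at $\varphi(q)$ is contained in $V_q$, and
\[
\dim V_q \;=\; \dim\bigl(\mathcal{O}_{\varphi(q)}/I_q\bigr) \;=\; \Ar(\varphi_q^{\ast}) \;=\; \Gr(\varphi_q^{\ast}) \;\leq\; d,
\]
where the middle equality is the regularity hypothesis and the last inequality follows because $X=\varphi(N)$ has local dimension $\leq d$ everywhere. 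I would then represent the data by choosing an open neighborhood $W_q \subset M$ of $\varphi(q)$ on which the germ $V_q$ is cut out by honest analytic functions (still denoted $V_q \subset W_q$), and an open neighborhood $U_q \subset N$ of $q$ small enough that $\varphi(U_q) \subset V_q$.

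Now fix $\pa \in X$; the case $\pa \notin X$ is vacuous because properness of $\varphi$ makes $X$ closed in $M$. The fiber $\varphi^{-1}(\pa)$ is compact, so it is covered by finitely many neighborhoods $U_{q_1},\ldots, U_{q_r}$ with $q_i \in \varphi^{-1}(\pa)$. Since $\varphi$ is a closed map, one can find an open neighborhood $W \subset \bigcap_i W_{q_i}$ of $\pa$ such that $\varphi^{-1}(W) \subset U_{q_1} \cup \cdots \cup U_{q_r}$. Setting $V := (V_{q_1}\cup\cdots\cup V_{q_r}) \cap W$ yields an analytic subset of $W$ containing $X \cap W = \varphi(\varphi^{-1}(W))$, whose dimension at $\pa$ satisfies $\dim V_\pa \leq \max_i \dim V_{q_i} \leq d$. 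Equidimensionality of $X$ gives the matching lower bound $\dim V_\pa \geq \dim X_\pa = d$, so $\dim V_\pa = d$, which exhibits $\pa$ as a Nash point of $X$.

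I do not expect a genuine obstacle here: the regularity hypothesis is precisely what produces an analytic germ of the correct dimension at every point of $N$, and the rest is a standard finite-cover argument made possible by properness. The only mildly delicate points are the passage from the germ $V_q$ to an analytic representative on a fixed open $W_q$, and the shrinking of the base to a neighborhood $W$ whose preimage lies inside $\bigcup_i U_{q_i}$; both are routine uses of coherence and of the fact that a proper continuous map between manifolds is closed.
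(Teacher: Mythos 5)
Your proof is correct and takes essentially the same approach as the paper: regularity produces, near each point of $N$, an analytic germ $V_q$ of dimension at most $d$ (namely the zero set of $\Ker(\varphi_q^{\ast})$) containing the local image, and properness is used to finitize the cover. The paper's version covers the compact set $\overline{\varphi^{-1}(V)}$ for a relatively compact neighborhood $V$ of the base point rather than the fiber $\varphi^{-1}(\pa)$ and leaves the construction of $V_q$ implicit, but the underlying argument is the same.
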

\begin{proof}
Indeed, fix a point $\pb \in X$. Consider a relatively compact neighborhood $V$ of $\pb$, and note that $\phi^{-1}(V) =U$ is a relatively compact open set of $N$. Now, for each point $\pa \in \overline{U}$, it follows from the regularity of the mapping that there exists an open neighborhood $U_{\pa}$ of $\pa$ and a semi-analytic set $Y_{\pa} \subset M$ of dimension at most $d$ such that $\phi(U_{\pa}) \subset Y_{\pa}$. From the relative compactness of $U$, it follows that there exists a semi-analytic set $Y$ of dimension at most $d$ (given as the union of a finite number of sets $Y_{\pa}$) such that $\phi(U) \subset Y$, finishing the proof.
\end{proof}

Indeed, we may generalize the above idea to provide a description of the Nash locus in terms of the regular points of a morphism:

\begin{lemma}\label{cor:subAnalyticNashRegCondition}
Let $\varphi:N \lgw M$ be a proper generically immersive analytic morphism such that $\varphi(N) = X$ is a closed equidimensional set. Then 
\[
X \setminus \NN(X) = \varphi(N \setminus \RR(\varphi,N)).
\]
\end{lemma}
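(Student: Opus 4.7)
The plan is to verify the two inclusions separately, noting that both sides of the claimed identity are contained in $X$ (since $\varphi(N) = X$), so it suffices to check, for each $\pb \in X$, the equivalence
\[
\pb \in \NN(X) \iff \varphi^{-1}(\pb) \subset \RR(\varphi, N).
\]
Since $X$ is closed and equidimensional of dimension $d$, one has $\dim(X_\pb) = d$ for every $\pb \in X$, so $\pb \in \NN(X)$ means $X_\pb \subset Y_\pb$ for a semianalytic germ of dimension exactly $d$. The proof repeatedly uses the sandwich $\Gr_\pa(\varphi) \leq \Fr_\pa(\varphi) \leq \Ar_\pa(\varphi)$, the equality $\Gr_\pa(\varphi) = d$ coming from generic immersivity of $\varphi$, Gabrielov's rank theorem, and the classical fact that the analytic Zariski closure of a semianalytic germ has the same dimension as the germ.

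For $\varphi(N\setminus \RR(\varphi,N)) \subset X\setminus \NN(X)$: given $\pa \notin \RR(\varphi, N)$ with $\varphi(\pa) = \pb$, suppose for contradiction that $\pb \in \NN(X)$. Then $X_\pb \subset Y_\pb$ for a semianalytic germ of dimension $d$, and taking its analytic Zariski closure produces an analytic germ $Z_\pb$ of dimension $d$ with $X_\pb \subset Z_\pb$. Letting $I \subset \O_\pb$ be the defining ideal of $Z_\pb$, the inclusion $\varphi(U_\pa)\subset Z_\pb$ on a small open neighborhood $U_\pa$ of $\pa$ gives $I \subset \Ker(\varphi_\pa^\ast)$, hence $\Ar_\pa(\varphi) \leq \dim(\O_\pb/I) = d$. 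Combined with the sandwich and $\Gr_\pa(\varphi) = d$, this forces $\Gr_\pa(\varphi) = \Fr_\pa(\varphi) = \Ar_\pa(\varphi) = d$, so $\pa \in \RR(\varphi, N)$, a contradiction.

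For the reverse inclusion I argue by contraposition: assume $\varphi^{-1}(\pb) \subset \RR(\varphi, N)$ and prove $\pb \in \NN(X)$. For each $\pa \in \varphi^{-1}(\pb)$, regularity at $\pa$ together with Gabrielov's rank theorem and $\Gr_\pa(\varphi) = d$ yields $\Ar_\pa(\varphi) = d$, so the analytic germ $Z_\pa$ at $\pb$ defined by the ideal $\Ker(\varphi_\pa^\ast)$ has dimension $d$ and contains $\varphi(U_\pa)$ for a sufficiently small open neighborhood $U_\pa$ of $\pa$. Because $\varphi$ is proper, $\varphi^{-1}(\pb)$ is compact and can be covered by finitely many $U_{\pa_1},\ldots,U_{\pa_k}$. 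Setting $\widetilde U := \bigcup_i U_{\pa_i}$, the closed-map property of the proper map $\varphi$ provides an open neighborhood $W$ of $\pb$ in $M$ with $\varphi^{-1}(W) \subset \widetilde U$, while the finite union $Z := \bigcup_i Z_{\pa_i}$ is an analytic germ at $\pb$ of dimension $d$. Combining these, $X \cap W = \varphi(\varphi^{-1}(W)) \subset \varphi(\widetilde U) \subset Z$, so $X_\pb$ is contained in an analytic germ, and hence a semianalytic germ, of the correct dimension, giving $\pb \in \NN(X)$.

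I expect the only mildly nontrivial ingredient to be the dimension preservation of the analytic Zariski closure of a semianalytic germ (invoked in the first direction), which is classical and follows from the local decomposition of semianalytic germs into smooth pure-dimensional semianalytic cells. Everything else is a direct application of Gabrielov's rank theorem, the sandwich $\Gr\leq \Fr\leq \Ar$, and the compactness/properness argument used to glue the local analytic image germs into a single analytic germ containing $X_\pb$.
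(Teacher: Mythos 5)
Your proof is correct and, after unfolding the paper's reference to Lemma \ref{ex:subAnalyticNash}, follows essentially the same two-directional structure as the paper's argument: properness and compactness are used to glue local analytic image germs for the inclusion $X\setminus\NN(X) \subset \varphi(N\setminus\RR(\varphi,N))$, while the sandwich $\Gr_\pa \leq \Fr_\pa \leq \Ar_\pa$ together with Gabrielov's rank theorem handles the reverse inclusion. You do supply one justification that the paper treats as immediate: the paper simply asserts that $\varphi(U)\subset Y$ (with $Y$ semianalytic of dimension $d$) ``implies that $\varphi$ is regular at every point $\pa\in U$'', and your passage through the analytic Zariski closure, the bound $\Ar_\pa(\varphi) \leq d$, and the rank sandwich is exactly the missing chain of implications. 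One minor imprecision to flag: for $\K=\R$ you assert that the germ $Z_\pa$ cut out by $\Ker(\varphi_\pa^\ast)$ has dimension exactly $d$; a priori a real analytic zero locus can have dimension strictly below the Krull dimension of the quotient ring. Here it does not, because $\Ker(\varphi_\pa^\ast)$ is a real prime ideal ($\R\{\ub\}$ admits no nontrivial sum-of-squares relations) and Risler's real Nullstellensatz then gives $\dim Z_\pa = d$; alternatively one can simply claim $\dim Z_\pa \leq d$, as the paper does in Lemma \ref{ex:subAnalyticNash}, since the finite union $Z$ automatically has dimension $d$ from the containment $X_\pb\subset Z$.
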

\begin{proof}
First, let us show that $X \setminus \NN(X) \subset \varphi(N \setminus \RR(\varphi,N))$ by proving the associated inclusion of their complements. Fix a point $\pb \in X\setminus \varphi(N \setminus\RR(\varphi,N))$. This means that  $\varphi$ is regular on the pre-image of $\varphi^{-1}(\pb)$. Since being regular is an open property, there exists a neighborhood $U$ of $\varphi^{-1}(\pb)$ such that $\varphi|_{U}$ is everywhere regular. Moreover, since $\varphi$ is proper and continuous, there exists a neighborhood $V$ of $\pb$ such that $\varphi^{-1}(V) \subset U$. By Lemma \ref{ex:subAnalyticNash} applied to $X \cap V$, we conclude that $\pb \in \NN(X)$ as desired.

Now, let us prove that $\varphi(N \setminus\RR(\varphi,N)) \subset X \setminus \NN(X)$ by proving the associated inclusion of their complements. Fix a point $\pb \in \NN(X)$ and let $Y_{\pb}$ be the germ of a semi-analytic set of dimension $d$ which contains $X_{\pb}$; let $V$ be a subanalytic and relatively compact neighborhood of $\pb$ where $Y_{\pb}$ admits a representative $Y$ defined in $V$ such that $X \cap V \subset Y$. Let $U =\varphi^{-1}(V)$, which is a relatively compact neighborhood of $\varphi^{-1}(\pb)$. It follows that $\varphi(U) \subset Y$, which implies that $\varphi$ is regular at every point $\pa \in U$ ; in particular, at every point $\pa \in \varphi^{-1}(\pb)$. We conclude that $\pb \notin \varphi(N \setminus\RR(\varphi,N))$, finishing the proof.
\end{proof}

We now consider the following set:
\[
\begin{aligned}
\SA(X)&:=\{\pa\in M\mid X_\pa \text{ is the germ of a semianalytic  set}\}.
\end{aligned}
\]
It is trivially true that $M\setminus \ovl X \subset \SA(X)$ and $\SA(X)\subset \NN(X)$. But in general, $\SA(X) \neq  \NN(X)$ as is illustrated by the following examples:

\begin{example}\label{ex:SNneqNN}\hfill
\begin{enumerate}
\item[i)] Consider a subanalytic two dimensional set $S$ in $\R^3$ such that the germ at the origin $S_0$ is not semianalytic (for instance, the image of a compact set through the Osgood mapping \cite{Os} provides such a surface). We consider $X:=\R^3\setminus S$; $X$ is subanalytic and of pure dimension 3, thus it is Nash subanalytic since $X\subset \R^3$. But the germ $X_0$ is not semianalytic. Note that $0\notin X$.
\item[ii)] We may modify the example as follows: we set 
\[
X:=\R^4\setminus (\R^3\times\{0\})\cup(S\times\{0\}\times\{0\}).
\]
Then $X$ is equidimensional of dimension 4, and $\NN(X)=\R^4$, but $X_0$ is not semianalytic. Note that $0\in X$.
\end{enumerate}
\end{example}

\begin{remark}\label{rk:NNforNonClosedSet}
We recall that the closure of a semianalytic (respectively, a subanalytic) set is semianalytic (respectively, subanalytic) set of the same dimension. It follows that $\NN(X) = \NN(\overline{X})$ for every subanalytic set $X \subset M$. In contrast, we can only conclude from this argument that $\SA(X) \subset \SA(\overline{X})$, c.f. example \ref{ex:SNneqNN}(i).
\end{remark}

\section{Eisenstein power series and families of morphisms}

\subsection{Eisenstein power series}\label{ssec:Eisenstein}

Given a closed polydisc $D \subset \C^n$, we denote by $\mathcal{O} = \mathcal{O}(D)$ the ring of analytic functions defined in a neighborhood of $D$, and note that it is an UFD by \cite{Da}. Let $\KK$ be an algebraic closure of its fraction field. The ring of Eisenstein series over $\O$ is the filtered limit of rings:
$$
\bigcup_{\mathfrak c\in\KK}\bigcup_{f\in \O\setminus\{0\}} \O_f\lb x_1,\ldots, x_n\rb[\mathfrak c]
$$
where $\O_f$ denotes the localization of $\O$ with respect to the multiplicative family $\{1,f,f^2,\ldots,\}$. We denote by $\KK \nl x_1,\ldots, x_r \nr$ this family of rings. Now let us consider a ring homomorphism:
\[
\phi : \KK\nl \x \nr\lgw \KK\nl \ub \nr
\]
where $\x = (x_1,\ldots,x_n)$ and $\ub = (u_1,\ldots, u_m)$. We say that $\phi$ is a morphism of Eisenstein power series if $\phi(f) = f(\phi(\x))$ for every $f\in \KK\nl \x \nr$. We denote by $\wdh{\phi}$ its extension to the ring of formal power series and we define:
\begin{equation}\label{eq:DefRanks}
\begin{aligned}
\text{the Generic rank:} & &\Gr(\phi) &:= \mbox{rank}_{\mbox{Frac}(\KK \nl \ub \nr )}(\mbox{Jac}(\phi)),\\
\text{the Formal rank:}& &\Fr(\phi) &:= \dim\left(\frac{\KK \lb \x \rb  }{\Ker(\wdh{\phi})}\right),\\
\text{and the temperate rank:}& &\Tr(\phi)&:=\dim\left(\frac{\KK \nl \x \nr}{\Ker(\phi)}\right),
\end{aligned}
\end{equation}
of $\phi$, where $\mbox{Jac}(\phi)$ stands for the matrix $[\partial_{u_i} \phi(x_j)]_{i,j}$. It follows from \cite[Th. 1.1 and Prop. 4.9]{BCR2} that:

\begin{theorem}[Eisenstein power series rank Theorem]\label{thm:TemperateRank}
Let $\phi:\KK\nl \x \nr\lgw \KK\nl  \ub \nr$ be a morphism of rings of Eisenstein power series. Then
\[
\Gr(\phi)=\Fr(\phi)\Longrightarrow \Gr(\phi)=\Fr(\phi)=\Wr(\phi).
\]
\end{theorem}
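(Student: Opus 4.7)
The plan is to reduce the statement to the cited W-temperate rank theorem \cite[Th.~1.1]{BCR2}, using \cite[Prop.~4.9]{BCR2} to compare the intrinsically defined temperate rank $\Wr(\phi)$ with a version defined inside a fixed W-temperate subring. Since $\KK\nl\x\nr$ is constructed as a filtered union $\bigcup_{\mathfrak{c},f}\O_f\lb\x\rb[\mathfrak{c}]$, the first step is to descend from this enormous ring to a single, finitely described subring containing all the relevant data of $\phi$.

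More precisely, the morphism $\phi$ is determined by the finitely many images $\phi(x_1),\dots,\phi(x_n)\in\KK\nl\ub\nr$, so each $\phi(x_j)$ lies in some ring $\O_{g_j}\lb\ub\rb[\mathfrak{d}_j]$ with $g_j\in\O\setminus\{0\}$ and $\mathfrak{d}_j\in\KK$ algebraic over $\Frac(\O)$. Setting $g:=\prod g_j$ and letting $\mathfrak{d}\in\KK$ be a primitive element generating the finite extension of $\Frac(\O)$ containing all the $\mathfrak{d}_j$, every $\phi(x_j)$ belongs to the single ring $S:=\O_g\lb\ub\rb[\mathfrak{d}]$. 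Choosing symmetrically a ring $R:=\O_f\lb\x\rb[\mathfrak{c}]$ containing the coefficients of $\phi(x_j)$ regarded as power series in $\x$, the morphism $\phi$ restricts to a morphism $\phi':R\lgw S$ of the specific type to which \cite[Th.~1.1]{BCR2} applies.

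The central step is to check that all three ranks are preserved under this restriction. For the generic rank, this is immediate: $\mbox{Jac}(\phi)$ and $\mbox{Jac}(\phi')$ are literally the same matrix, and the matrix rank computed over $\Frac(S)$ equals the one computed over $\Frac(\KK\nl\ub\nr)$, since $\Frac(S)\hookrightarrow\Frac(\KK\nl\ub\nr)$ is a field extension. For the formal rank, both $\wdh{\phi}$ and $\wdh{\phi'}$ have the same image in $\KK\lb\ub\rb$, so the kernels in $\KK\lb\x\rb$ coincide and $\Fr(\phi)=\Fr(\phi')$. The delicate point is the temperate rank: a priori $\dim\bigl(\KK\nl\x\nr/\Ker(\phi)\bigr)$ and $\dim\bigl(R/\Ker(\phi')\bigr)$ need not agree, since enlarging the coefficient ring could in principle produce new relations and decrease the quotient dimension. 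This is exactly what \cite[Prop.~4.9]{BCR2} rules out by establishing compatibility of the temperate rank with respect to the filtered union of W-temperate rings; this is the step I expect to be the main obstacle to ``reprove'' from scratch, and is the content which forces us to cite \cite{BCR2} rather than give a self-contained argument.

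Once the identification $\Gr(\phi)=\Gr(\phi')$, $\Fr(\phi)=\Fr(\phi')$, $\Wr(\phi)=\Wr(\phi')$ is established, the implication $\Gr(\phi)=\Fr(\phi)\Rightarrow\Gr(\phi)=\Fr(\phi)=\Wr(\phi)$ is simply a transcription of \cite[Th.~1.1]{BCR2} applied to $\phi'$. Thus the proof is essentially a packaging step: organizing the filtered colimit data of $\phi$ into a single W-temperate morphism, verifying compatibility of the three rank invariants via \cite[Prop.~4.9]{BCR2}, and then invoking the main rank theorem of \cite{BCR2}.
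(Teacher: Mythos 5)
The paper gives essentially no proof of Theorem~\ref{thm:TemperateRank}: it is a one-line citation, ``It follows from [BCR2, Th.~1.1 and Prop.~4.9]''. So there is no paper-internal argument to compare against, only an interpretation of what the citation is doing. Your reading is plausible but, I believe, misapportions the roles of the two cited results. You read Prop.~4.9 as a ``compatibility under filtered colimit'' statement that lifts an application of Th.~1.1 from a single finitely-described subring $\phi':\O_f\lb\x\rb[\mathfrak c]\to\O_g\lb\ub\rb[\mathfrak d]$ up to the full Eisenstein ring. In BCR2 the framework runs the other way: Th.~1.1 is stated \emph{abstractly} for morphisms between W-temperate rings, and Prop.~4.9's job is to verify that the Eisenstein family $\{\KK\nl x_1,\dots,x_n\nr\}_n$ satisfies the axioms of a W-temperate family (Weierstrass division, closure under derivations and composition, etc.). Once that is checked, Th.~1.1 applies \emph{directly} to $\phi:\KK\nl\x\nr\to\KK\nl\ub\nr$, with no descent to a finite level and no separate rank-compatibility argument. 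The finite-level bookkeeping you carry out (taking a common $g=\prod g_j$, a primitive element $\mathfrak d$, and so on) is exactly the kind of reasoning that appears \emph{inside} the proof of Prop.~4.9 when verifying the axioms — it is a sensible thing to flag, but it is internal to BCR2, not a new step needed here. Your observations that the generic rank is insensitive to the field extension $\Frac(S)\hookrightarrow\Frac(\KK\nl\ub\nr)$ and that the formal rank depends only on $\wdh\phi$ are correct and would be relevant if the descent route were taken; since the W-temperate framework avoids the descent, they end up being unnecessary for the statement at hand. In short: your proposal is not wrong, but it reconstructs a proof that BCR2 already packages differently, and the ``delicate point'' you isolate (temperate-rank compatibility under enlargement of the coefficient ring) is absorbed into the W-temperate axioms rather than being an extra lemma one must invoke.
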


\subsection{Families of morphisms}

\begin{definition}\label{def:admissibleFamily}
Consider two analytic maps $\Phi:\Om \subset \K^m\lgw \K^n$ and $\varphi:\Lambda\subset \K^l \lgw\Om$, where $\Om$ is a connected open set and one of the following holds:
\begin{enumerate}
\item $\Lambda = \Om$ and $\varphi$ is the identity;
\item $\Lambda$ is a connected open set and $\varphi$ is an analytic map;
\item $\Lambda \subset \Om$ is an analytic subspace of $\Om$ such that $\mathcal{O}(\Lambda)$ is an integral domain, and $\varphi$ is its inclusion.
\end{enumerate}
An \emph{admissible family of analytic germs} (associated to $\Phi$ and $\varphi$) is the analytic map
\[
\begin{array}{ccc}\Psi:\Lambda\times (\K^m,0)& \lgw& (\K^n,0)\end{array}
\]
given by $\Psi(\pa,\ub) = \Phi(\varphi(\pa)+\ub) - \Phi(\varphi(\pa))$. We denote by $\Psi_{\pa}:(\K^m,0) \lgw(\K^n,0)$ the associated germ at $\pa$; in particular $\Psi_{\pa} = \Phi_{\pa}- \Phi(\varphi(\pa))$.
\end{definition}

\begin{lemma}\label{rk:BasicProperties} Given an admissible family of analytic germs:
\begin{enumerate}
\item The generic rank is constant along $\Lambda$, that is, 
\[
\forall \pa, \pb\in \Lambda,\ \Gr(\Psi_\pa)=\Gr(\Psi_{\pb}).
\]
\item The map $\pa\in\Lambda\lgm \Ar(\Psi_{\pa}^*)\in\N$ is upper semi-continuous for the Euclidean topology.

\item The ring of global sections $\mathcal{O}(\Lambda)$ is an integral domain.
\end{enumerate}
\end{lemma}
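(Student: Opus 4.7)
The plan is to verify the three items separately, exploiting throughout the fact that $\Psi_\pa$ differs from $\Phi_{\varphi(\pa)}$ only by translations in source and target: the map $\Psi_\pa$ factors as
\[
(\K^m,0) \xrightarrow{\ \tau_{\varphi(\pa)}\ } (\K^m,\varphi(\pa)) \xrightarrow{\ \Phi\ } (\K^n,\Phi(\varphi(\pa))) \xrightarrow{\ \tau_{-\Phi(\varphi(\pa))}\ } (\K^n,0),
\]
where the translations $\tau_c(x)=x+c$ are local analytic isomorphisms. In particular $\Gr(\Psi_\pa)=\Gr(\Phi_{\varphi(\pa)})$ and $\Ar(\Psi_\pa^\ast)=\Ar(\Phi_{\varphi(\pa)}^\ast)$, so it suffices to study $\Phi$ at the moving base point $\varphi(\pa)\in\Om$.

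For (1), I compute $\mathrm{Jac}(\Psi_\pa)(\ub)=\mathrm{Jac}(\Phi)(\varphi(\pa)+\ub)$, hence $\Gr(\Psi_\pa)$ is the maximal rank attained by $\mathrm{Jac}(\Phi)$ on any Euclidean neighborhood of $\varphi(\pa)$ in $\Om$. Because the rank of an analytic matrix is lower semi-continuous and $\Om$ is connected, this maximum equals a single integer $r$ (the generic rank of $\Phi$ on $\Om$), independent of $\pa$.

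For (2), fix $\pa\in\Lambda$ and put $d_a:=\Ar(\Psi_\pa^\ast)=\Ar(\Phi_{\varphi(\pa)}^\ast)$. Since $\mathcal{O}_{\K^n,\Phi(\varphi(\pa))}$ is Noetherian, the prime ideal $\ker(\Phi_{\varphi(\pa)}^\ast)$ is generated by finitely many germs $h_1,\ldots,h_s$. I choose analytic representatives on a neighborhood $U$ of $\Phi(\varphi(\pa))$ and shrink $U$ so that the analytic set $V:=\{h_1=\cdots=h_s=0\}\cap U$ has local dimension at most $d_a$ at every point of $U$ (possible because $\dim_{\Phi(\varphi(\pa))}V=d_a$). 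The fact that $h_i\in\ker(\Phi_{\varphi(\pa)}^\ast)$ means $h_i\circ\Phi$ vanishes as a germ at $\varphi(\pa)$, so there exists an open neighborhood $W$ of $\varphi(\pa)$ in $\Om$ on which $h_i\circ\Phi\equiv 0$ for all $i$, i.e.\ $\Phi(W)\subset V$. By continuity of $\varphi$, the set $\varphi^{-1}(W)$ is a Euclidean neighborhood of $\pa$ in $\Lambda$, and for any $\pb$ in it the germ of each $h_i$ at $\Phi(\varphi(\pb))\in V$ lies in $\ker(\Phi_{\varphi(\pb)}^\ast)$; therefore
\[
\Ar(\Psi_\pb^\ast)=\Ar(\Phi_{\varphi(\pb)}^\ast)\;\leq\;\dim_{\Phi(\varphi(\pb))}V\;\leq\;d_a,
\]
which is upper semi-continuity. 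For (3), case (3) of Definition~\ref{def:admissibleFamily} is covered by hypothesis; in cases (1) and (2) the set $\Lambda$ is a connected open subset of $\K^l$, and the identity principle together with Baire's theorem shows that $\mathcal{O}(\Lambda)$ is a domain: if $f,g\in\mathcal{O}(\Lambda)$ are nonzero, then $\{f=0\}$ and $\{g=0\}$ are proper analytic subsets with empty interior, so their union cannot cover $\Lambda$, whence $fg\neq 0$.

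The main obstacle is item (2): one might first look for an algebraic proof via the behaviour of $\ker(\widehat{\Psi}_\pa^\ast)$ in families, which is delicate. The geometric argument sketched above bypasses this by using Noetherianity to produce finitely many \emph{global} equations $h_1,\ldots,h_s$ that cut out the analytic closure of the image at $\varphi(\pa)$ and that automatically persist in the kernel at every nearby base point.
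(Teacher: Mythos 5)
Your proof is correct and follows essentially the same route as the paper: for item (2), you choose finitely many generators of $\Ker(\Phi^*_{\varphi(\pa)})$, observe that they persist in the kernel at nearby base points, and bound $\Ar$ from above; the paper records exactly this idea (with the final dimension estimate left to the reader) and declares (1) and (3) "straightforward". You have supplied the omitted details — the computation $\mathrm{Jac}(\Psi_\pa)(\ub)=\mathrm{Jac}(\Phi)(\varphi(\pa)+\ub)$ for (1), the shrinking of $U$ to control $\dim_y V$ for (2), and the identity principle for (3) — all of which are sound.
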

\begin{proof}
Condition $(1)$ and $(3)$ are straightforward. In order to prove $(2)$, let $f_1$, \ldots, $f_s$ be generators of $\Ker(\Phi^*_{\varphi(\pa)})$ and $U$ be an open neighborhood of $\varphi(\pa)$ such that the $f_i$ are well defined on $U$. Let $V$ be a connected neighborhood of $\pa$ contained in $\varphi^{-1}(U)$. Since $\Phi$ is analytic, apart from shrinking $U$ and $V$, we have that $f_i \circ \Phi \circ\varphi_{\pb} \equiv 0$, for all $\pb \in V$. We conclude easily.
\end{proof}

Now fix an admissible family of analytic map germs 
\[
\Psi:\Lambda\times (\K^m,0) \lgw (\K^n,0).
\]
and let $\LLL$ denote the fractions field of the ring $\O(\Lambda)$ of analytic functions on $\Lambda$. Note that $\Psi$ induces a morphism of power series rings:
\[
\Psi_\LLL^*: \LLL\lb \x\rb   \lgw \LLL\lb \ub\rb
\]
where $\x=(x_1,\ldots, x_n)$, $\ub=(u_1,\ldots, u_m)$ and
\[
\Psi_\LLL^*(x_i) = \sum_{\gamma \in \mathbb{N}^m \setminus 0} F_{i,\gamma} \ub^{\gamma}, \quad F_{i,\gamma} =\frac{1}{\g!} \frac{\partial^{|\gamma|}}{\partial \w^{\gamma}} (x_i \circ \Phi) \circ \varphi \in \mathcal{O}(\Lambda).
\]
where $\w = (w_1,\ldots,w_m)$ are globally defined coordinate systems over $\Omega$. Note that $F_{i,0} =0$ for every $i =1,\ldots,n$, which guarantees that $\Psi_\LLL^*$ is well-defined.

Now let $r=\Gr(\Psi_{\LLL}^{\ast})$. Thus any $(r+1)\times (r+1)$ minor of the Jacobian matrix of $\Phi_\LLL^{\ast}$ is zero, therefore $\Gr(\Psi_\pa)\leq r$ for every $\pa\in \Lambda$. On the other hand, there is a $r\times r$ minor of the Jacobian matrix of $\Phi_\LLL^\ast$, denoted by $M$, that is not identicaly zero. So, for a generic $\pa\in\Lambda$, we have $M(\pa)\neq 0$ and $\Gr(\Psi_\pa)=r$. Therefore, 
by Lemma \ref{rk:BasicProperties}(1), we have that:
\[
\Gr(\Psi_{\LLL}^{\ast}) = \Gr(\Psi_{\pa}), \quad \forall \pa \in \Lambda.
\]
 We now turn to the problem of relating the formal rank of $\Psi$ at a point $\pa \in \Lambda$ with the formal rank of $\Psi_\LLL^*$:

\begin{proposition}\label{cor_extension}
Let $\Psi:\Lambda\times (\K^m,0) \lgw (\K^n,0)$ be an admissible family of analytic map germs. If there is $\pa \in \Lambda$ such that $\Gr(\Psi_{\pa}) = \Fr(\Psi_{\pa})$, then:
\begin{equation}\label{eq:ranks}
\Gr(\Psi_{\LLL}^*)=\Fr(\Psi_{\LLL}^*).
\end{equation}
In particular, the set
\[
\RR(\Psi,\Lambda):=\{\pa\in \Lambda \mid \Psi_\pa \text{ is regular }\}
\]
is either empty or contains a set of the form $\Lambda \setminus W$ where $W$ is a countable union of proper analytic subsets of $\Lambda$ generated by global equations in $\mathcal{O}(\Lambda)$.
\end{proposition}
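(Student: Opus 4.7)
The plan is to combine Gabrielov's rank Theorem applied at $\pa_0$ with an analytic spread-out of the resulting relations, realize the resulting family as an Eisenstein power series morphism, and then invoke Theorem \ref{thm:TemperateRank}. Write $r := \Gr(\Psi_{\pa_0}^\ast)$; by Lemma \ref{rk:BasicProperties}(1) we have $\Gr(\Psi_\LLL^\ast) = r$, and since $\Fr \geq \Gr$ is automatic, proving \eqref{eq:ranks} reduces to producing $n-r$ elements of $\Ker(\wdh{\Psi_\LLL^\ast})$ forming a partial system of parameters. Set $\pt_0 := \Phi(\varphi(\pa_0))$. By Gabrielov's rank Theorem applied at $\pa_0$, $\Ar(\Psi_{\pa_0}^\ast) = r$; I pick convergent germs $f_1, \ldots, f_{n-r} \in \Ker(\Psi_{\pa_0}^\ast) \cap \K\{\x\}$ forming a partial system of parameters.

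The first step is the analytic spread-out. I extend each $f_i$ to an analytic function on a neighborhood $U$ of $\pt_0$ in $\K^n$; then $f_i \circ \Phi$ is analytic on $\Phi^{-1}(U)$ with vanishing germ at $\varphi(\pa_0)$, hence vanishes on the connected component $W$ of $\Phi^{-1}(U)$ through $\varphi(\pa_0)$ by analytic continuation. Setting $V := \varphi^{-1}(W) \subset \Lambda$, for every $\pb \in V$ the shifted germ
\[
f_{i,\pb}(\x) \;:=\; f_i\bigl(\x + \Phi(\varphi(\pb)) - \pt_0\bigr) \;=\; \sum_\alpha c_{i,\alpha}(\pb)\, \x^\alpha,
\]
with $c_{i,\alpha}(\pb) := \partial^\alpha f_i(\Phi(\varphi(\pb)) - \pt_0)/\alpha!$ analytic on $V$, belongs to $\Ker(\Psi_\pb^\ast)$.

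Next, I realize this family as an Eisenstein morphism. Fix a closed polydisc $D \subset V$ centered at $\pa_0$, set $\O := \mathcal{O}(D)$ and let $\KK$ be an algebraic closure of $\mathrm{Frac}(\O)$. The restriction of $\Psi_\LLL^\ast$ to $D$ extends to an Eisenstein morphism $\psi : \KK\nl \x\nr \to \KK\nl \ub\nr$, and $F_i := \sum_\alpha c_{i,\alpha}\, \x^\alpha \in \O\lb \x\rb \subset \KK\nl \x\nr$ lies in $\Ker(\psi)$ (its image in $\O\lb \ub\rb$ has coefficients that are analytic on $D$ and vanish on $V$, hence vanish identically). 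Since $F_i|_{\pa_0} = f_i$ is part of a system of parameters in $\K\{\x\}$, after a linear change of coordinates on $\x$ each $f_i$ is $x_i$-Weierstrass regular of some order $d_i$; the leading coefficient $c_{i,d_i \mathbf{e}_i} \in \O$ is then nonzero, hence invertible in $\KK$, so $F_i$ is $x_i$-regular of order $d_i$ in $\KK\lb \x\rb$, and $(F_1, \ldots, F_{n-r})$ forms a partial system of parameters in $\KK\lb \x\rb$. This gives $\Fr(\psi) \leq r$; invariance of formal rank under the flat base changes from $\LLL$ through $\mathrm{Frac}(\O)$ to $\KK$ yields $\Fr(\Psi_\LLL^\ast) = r$, establishing \eqref{eq:ranks}.

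Finally, with $\Gr(\psi) = \Fr(\psi) = r$, Theorem \ref{thm:TemperateRank} implies $\Wr(\psi) = r$, so $\Ker(\psi) \subset \KK\nl \x\nr$ is prime of height $n-r$ and admits $n-r$ Eisenstein generators $E_1, \ldots, E_{n-r}$ forming a partial system of parameters. Each $E_j$ lives in a subring $\O_{g_j}\lb \x\rb[\pc_j]$ with $g_j \in \O \setminus \{0\}$ and algebraic $\pc_j \in \KK$. Let $W \subset \Lambda$ be the union of the zero loci of global extensions of the $g_j$'s, the discriminant loci of the minimal polynomials of the $\pc_j$'s, and the zero loci of the Weierstrass leading coefficients of the $E_j$'s; this is a (finite, in particular countable) union of proper analytic subsets of $\Lambda$ defined by global sections of $\mathcal{O}(\Lambda)$. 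For $\pa \in \Lambda \setminus W$, the specializations $E_j|_\pa \in \K\lb \x\rb$ are well-defined, lie in $\Ker(\wdh{\Psi_\pa^\ast})$, and remain a partial system of parameters, forcing $\Fr(\Psi_\pa) \leq r$ and hence $\pa \in \RR(\Psi, \Lambda)$. The main technical obstacle is the rigorous verification of the Eisenstein realization of the spread-out family and the compatibility of the partial-system-of-parameters property through the base changes $\K\{\x\} \to \O\lb \x\rb \to \KK\nl \x\nr \to \KK\lb \x\rb$.
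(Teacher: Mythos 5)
Your overall plan is close in spirit to the paper's (start from Gabrielov's rank theorem at one point, spread the relations out over the parameter space, conclude a rank equality over $\LLL$), but there is a genuine gap at the precise point that the paper's Extension Lemma~\ref{cor_linear_equations} is designed to handle, and your argument does not replace it.

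The central problem is the sentence ``invariance of formal rank under the flat base changes from $\LLL$ through $\mathrm{Frac}(\O)$ to $\KK$ yields $\Fr(\Psi_\LLL^*)=r$.'' You have produced relations $F_i$ with coefficients in $\O=\mathcal{O}(D)$; these live over $\mathrm{Frac}(\O)$ and $\KK$, \emph{not} over $\LLL=\mathrm{Frac}(\mathcal{O}(\Lambda))$, since the restriction map $\mathcal{O}(\Lambda)\hookrightarrow \mathcal{O}(D)$ is injective but very far from surjective. So you need to \emph{descend} the bound $\Fr(\psi)\leq r$ from the big coefficient field back to $\LLL$. But the direction that follows from extending the kernel (i.e.\ from $\Ker(\Psi_\LLL^*)\cdot\KK\lb\x\rb\subset \Ker(\psi)$) is $\Fr(\psi)\leq\Fr(\Psi_\LLL^*)$; this is exactly the inequality that tells you nothing new, since $\Fr(\Psi_\LLL^*)\geq\Gr(\Psi_\LLL^*)=r$ is automatic. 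What you need is $\Fr(\Psi_\LLL^*)\leq\Fr(\psi)$, i.e.\ that the formal rank cannot \emph{drop} when one enlarges the coefficient field from $\LLL$ to $\mathrm{Frac}(\O)$. This is a genuine statement (note that an element of $\mathrm{Frac}(\O)\lb\x\rb$ has coefficients spanning an infinite-dimensional $\LLL$-space, so the naive ``separate $\LLL$-linearly independent coefficients'' trick does not apply), and proving it requires a Chevalley-lemma argument of essentially the same depth as the paper's Extension Lemma (which is itself built on Proposition~\ref{chevalley}). You have swept the hard part of the proof under the phrase ``flat base change,'' which it is not: field extension of coefficients of complete local rings is not captured by ordinary flat base change for these purposes, and the paper chooses instead to extend the Weierstrass coefficients $a_{i,\b}$ explicitly to $\LLL$ via Cramer's rule and the finite truncated linear systems $(S_k(\pa))$.

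The same issue recurs, more visibly, in your treatment of the ``In particular'' clause. The Proposition asserts that $\RR(\Psi,\Lambda)$ contains $\Lambda\setminus W$ with $W$ a countable union of \emph{proper analytic subsets of $\Lambda$ generated by global equations in $\mathcal{O}(\Lambda)$}. Your Eisenstein generators $E_j$, their denominators $g_j$, and the algebraic elements $\pc_j$ all live over $\mathcal{O}(D)$ for a fixed local polydisc $D$, and there is no reason for the $g_j$ to have ``global extensions'' to $\Lambda$ (indeed $\mathcal{O}(\Lambda)$ need not even be a UFD, and the Eisenstein formalism is only set up over a Stein compact like $D$). So your construction of $W$ only yields a set that is local to $D$, not a global analytic subset of $\Lambda$. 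The paper's Extension Lemma delivers the global sets $Z_{i,\b}\subsetneq\Lambda$ precisely because the linear systems $(S_k(\pa))$ have coefficients $M_{\g,i,\b},B_\g\in\mathcal{O}(\Lambda)$, so the degeneracy loci are cut out by global determinants; your route has no substitute for this. Finally, note that you invoke Theorem~\ref{thm:TemperateRank} inside the proof of this Proposition, whereas the paper reserves it for Theorem~\ref{lem:AlgebraGeometryHook}; that is not circular, but it also does not help with globality, since the temperate rank theorem is again only stated over the Eisenstein ring of a fixed polydisc.
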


The proof of this Proposition is based on an extension result, namely Lemma \ref{cor_linear_equations} below, whose proof is strongly inspired by an argument of Paw\l ucki cf. \cite[Lemme 6.3]{Pawthesis}. We postpone the proof to $\S$\ref{sec:Extention}. Condition \eqref{eq:ranks} is the deepest statement of the above Proposition which, together with Theorem \ref{thm:TemperateRank}, allows us to prove the following crucial technical result:

\begin{theorem}\label{lem:AlgebraGeometryHook}
Let $\Psi : \Lambda \times (\K^m,0) \lgw(\K^n,0)$ be an admissible family of analytic germs where $\Lambda$ is a connected open set of $\K^l$ (that is, we consider cases (1) and (2) of Definition \ref{def:admissibleFamily}). Then either $\RR(\Psi,\Lambda) = \emptyset$ or, for every $\pa \in \Lambda$, there exists an open neighborhood $U_{\pa} \subset \Lambda_{\pa}$ and a proper analytic set $Z \subset U_{\pa}$ such that $\RR(\Psi,U_{\pa}) \supset U_{\pa }\setminus Z$.
\end{theorem}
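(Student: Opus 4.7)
The plan is to strengthen Proposition~\ref{cor_extension} by applying the Eisenstein rank theorem (Theorem~\ref{thm:TemperateRank}) to a morphism of Eisenstein power series rings naturally attached to $\Psi$ on a closed polydisc in $\Lambda$; this will upgrade the countable union of proper analytic subsets provided by Proposition~\ref{cor_extension} to a single proper analytic subset, locally. First I would reduce to the case $\K=\C$: since the Eisenstein formalism is set up over $\C$, for $\K=\R$ one complexifies $\Phi$, $\varphi$ and hence $\Psi$ to a holomorphic admissible family on a complex neighborhood of $\Lambda$, and uses standard complexification arguments together with Lemma~\ref{lem:Paw4} to transfer the conclusion back to the real setting.

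Assume $\K=\C$ and $\RR(\Psi,\Lambda)\neq\emptyset$. Fix $\pa\in\Lambda$ and a closed polydisc $D\subset\Lambda$ centered at $\pa$. Let $\O=\O(D)$ (a UFD by \cite{Da}) and let $\KK$ be an algebraic closure of $\Frac(\O)$. The Taylor coefficients $F_{i,\gamma}\in\O(\Lambda)\subset\O$ of $\Psi(\cdot,\ub)$ define a morphism of Eisenstein power series rings
$$\phi:\KK\nl\x\nr\lgw\KK\nl\ub\nr,\qquad \phi(x_i)=\sum_{\gamma}F_{i,\gamma}\ub^{\gamma}.$$
The Jacobian of $\phi$ equals that of $\Psi_\LLL^{*}$, so $\Gr(\phi)=r$. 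Using $\LLL\subset\KK$, Proposition~\ref{cor_extension} (which gives $\Fr(\Psi_\LLL^{*})=r$), faithful flatness of the induced extension of formal power series rings, and the general inequality $\Gr\leq\Fr$, one obtains $\Fr(\phi)=r$. Theorem~\ref{thm:TemperateRank} then yields
$$\Tr(\phi)=\dim(\KK\nl\x\nr/\Ker(\phi))=r.$$

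Since $\KK\nl\x\nr$ embeds in the Cohen--Macaulay ring $\KK\lb\x\rb$ of dimension $n$, I would extract $n-r$ elements $h_1,\ldots,h_{n-r}\in\Ker(\phi)$ whose images form a regular sequence in $\KK\lb\x\rb$; by definition of the Eisenstein ring, each $h_j$ lies in $\O_{f_j}\lb\x\rb[\mathfrak c_j]$ for some $f_j\in\O\setminus\{0\}$ and $\mathfrak c_j\in\KK$ algebraic over $\Frac(\O)$. Let $g_j\in\O$ be the discriminant of a minimal polynomial of $\mathfrak c_j$, and set
$$Z:=\bigcup_j\{\pb\in D : f_j(\pb)g_j(\pb)=0\}\cup Z',$$
where $Z'\subset D$ is an additional proper analytic set described below. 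Then $Z$ is a proper analytic subset of $D$. For $\pb\in D\setminus Z$, each $h_j$ specializes to $h_j(\pb,\x)\in\C\lb\x\rb$; specializing the identity $\phi(h_j)=0$ yields $\Psi_\pb^{*}(h_j(\pb,\x))=0$, so $h_j(\pb,\x)\in\Ker(\Psi_\pb^{*})$; by the choice of $Z'$ these $n-r$ elements continue to form a regular sequence in $\C\lb\x\rb$, so $\Fr(\Psi_\pb)\leq r$. Combined with constancy of the generic rank (Lemma~\ref{rk:BasicProperties}(1)) and $\Gr\leq\Fr$, this forces $\Fr(\Psi_\pb)=\Gr(\Psi_\pb)=r$, so $\Psi_\pb$ is regular. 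Hence $\RR(\Psi,U_\pa)\supset U_\pa\setminus Z$ for any sufficiently small neighborhood $U_\pa$ of $\pa$ in $D$.

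The main obstacle is the construction of $Z'$: one must ensure that the specialization of an Eisenstein regular sequence remains regular in $\C\lb\x\rb$ on the complement of a proper analytic subset of $D$. The natural approach is to work inside the intermediate ring $\O_f[\mathfrak c]\lb\x\rb$ (with $f=\prod_j f_j$ and $\mathfrak c=(\mathfrak c_j)_j$) and invoke an appropriate generic flatness/upper semicontinuity statement for the Krull dimension of the fibers of $\O_f[\mathfrak c]\lb\x\rb/(h_1,\ldots,h_{n-r})$ over the base; the locus where fibers acquire excess dimension then pulls back through the finite extension defined by $\mathfrak c$ to a proper analytic subset $Z'\subset D$, completing the argument.
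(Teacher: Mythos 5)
Your overall strategy matches the paper's: restrict to a polydisc $D\subset\Lambda$, form the Eisenstein morphism $\Psi_{\KK}^{\ast}:\KK\nl\x\nr\to\KK\nl\ub\nr$, pass $\Gr=\Fr$ through Proposition~\ref{cor_extension}, invoke Theorem~\ref{thm:TemperateRank} to get $\Wr=r$, and then specialize kernel elements at generic $\pb\in D$. The reduction of $\K=\R$ to $\K=\C$ by complexification is also the paper's move. The difference -- and the source of the gaps -- is in how the kernel elements are chosen and specialized.

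The paper does not take an arbitrary system of parameters from $\Ker(\Psi^{\ast}_{\KK})$. It invokes the structural result [BCR2, Prop.~2.8~vi)] to get, after a $\KK$-linear change of the $\x$-coordinates, \emph{monic Weierstrass polynomials} $Q_i\in\KK\nl x_1,\ldots,x_r\nr[x_{r+i}]$, one for each extra variable $x_{r+i}$. This specific shape is what makes the rest of the argument painless. First, writing $Q_i=\sum_j Q_{i,j}\,\pc^j$ with $Q_{i,j}\in\O(D)_f\lb\x\rb$ and using $\Psi_{\KK}^{\ast}(\pc)=\pc$ together with the linear independence of $1,\pc,\ldots,\pc^{d-1}$ over $\O(D)_f\lb\ub\rb$, one sees each $Q_{i,j}$ already lies in $\Ker$; one may replace $Q_i$ by $Q_{i,0}$, \emph{which is again monic in $x_{r+i}$}, and so stay inside $\O(D)_f\lb\x\rb$ with no algebraic extension. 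Second, for any $\pb$ with $f(\pb)\neq0$ the specializations $Q_{i,\pb}$ are monic of positive degree in the distinct variables $x_{r+i}$ over $\C\lb x_1,\ldots,x_r\rb$, so $\C\lb\x\rb/(Q_{1,\pb},\ldots,Q_{n-r,\pb})$ is finite over $\C\lb x_1,\ldots,x_r\rb$ and has dimension $\le r$ with no further condition on $\pb$. Thus $Z=\{f=0\}$ alone suffices; there is no analogue of your $Z'$.

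In your version both of these steps fail without additional work, and you flag one but not the other. You correctly identify the need for $Z'$ (preservation of the regular-sequence property after specialization) as ``the main obstacle,'' and indeed a generic-flatness statement for a quotient like $\O_f[\pc]\lb\x\rb/(h_1,\ldots,h_{n-r})$ over the analytic base is not a ready-made result: the standard generic flatness theorems are for finitely generated algebras, not for power-series modules over an analytic base, so this step would require its own proof. The second, unflagged gap concerns $\pc_j\in\KK$: this element is algebraic over $\Frac(\O(D))$, so $h_j(\pb,\x)$ is only defined after choosing a root of its minimal polynomial at $\pb$, and one must argue that some consistent branch produces an element of $\C\lb\x\rb$ actually lying in $\Ker(\wdh{\Psi}^{\ast}_{\pb})$. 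If you instead try the paper's trick of replacing $h_j$ by its $\pc_j$-degree-zero part $h_{j,0}\in\O(D)_{f_j}\lb\x\rb$, there is no reason for $h_{1,0},\ldots,h_{n-r,0}$ to remain a regular sequence: monicity is what makes this replacement harmless for the $Q_i$, and your $h_j$ carry no such structure. Both gaps disappear once you import [BCR2, Prop.~2.8~vi)] and work with Weierstrass polynomials as the paper does; the rest of your proposal then collapses to the paper's proof.
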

\begin{proof}
Let $\LLL$ denote the fraction field of $\mathcal{O}(\Lambda)$. Note that $\Psi$ yields a morphism $\Psi_{\LLL}^{\ast}:\LLL \lb \x \rb \lgw\LLL \lb \ub \rb$ and that $\Gr(\Psi_{\LLL}^{\ast}) = \Gr(\Psi_{\pa})$ for any $\pa \in \Lambda$. Now, suppose that; $\RR(\Psi,\Lambda) \neq \emptyset$ so that Proposition \ref{cor_extension} yields:
\[
\Gr(\Psi_\LLL^*) = \Fr(\Psi_{\LLL}^{\ast}).
\]
We now first prove the Lemma in the case that $\K=\C$. Let $\pa \in \Lambda$ be fixed and consider a sufficiently small closed polydisc $D_{\pa} \subset \Lambda$ centered at $\pa$. Let $ \mathcal{O}(D_{\pa})$ denote the ring of analytic functions defined in a neighborhood of $D_{\pa}$; note that this ring is a UFD by \cite{Da}. Let $\KK$ denote the algebraic closure of the fraction field of $ \mathcal{O}(D_{\pa})$. We note that the restriction of $\Psi$ to $D_{\pa}$, yields a temperate morphism $\Psi^{\ast}_{\KK}: \KK \nl \x \nr \lgw\KK \nl \ub \nr$. It is clear that $\Gr(\Psi^{\ast}_{\KK}) = \Gr(\Psi^{\ast}_{\LLL})$, and since the restriction from $\Lambda$ to $D_{\pa}$ yields an injective morphism from $\mathcal{O}(\Lambda) $ into $ \mathcal{O}(D_{\pa})$, we conclude that:
\[
 \Gr(\Psi_\KK^*)= \Fr(\Psi^{\ast}_{\KK}).
\]
so that we may apply Theorem \ref{thm:TemperateRank} in order to get
\[
\Gr(\Psi_\KK^*) = \Wr(\Psi_\KK^*)=:r.
\]
Now, up to a $\KK$-linear change of coordinates, applying \cite[Prop. 2.8 vi)]{BCR2}, the morphism $\KK\nl x_1,\ldots,x_r\nr \lgw\fract{\KK\nl \x \nr}{\Ker(\Psi_\KK^*)}$ is finite, which means that there are non-zero Weierstrass polynomials 
\[
Q_i(x_1,\ldots, x_r,x_{r+i}) \in \KK \nl x_1,\ldots, x_r \nr[x_{r+i}]\text{ for } i=1,\ldots, n-r,
\]
such that $\Psi^{\ast}_{\KK} (Q_i ) \equiv 0$. By the definition of $\KK\nl\x\nr$ and the primitive element theorem, there exists $f\in \mathcal{O}(D_{\pa}) $ and $\pc \in \KK$ of degree $d$ such that $Q_i \in \mathcal{O}(D_\pa)_f \lb \x \rb [\pc]$, that is
\[
Q_i=\sum\limits_{j=0}^{d-1} Q_{i,j} \pc^j, \quad Q_{i,j} \in \mathcal{O}(D_\pa)_f \lb\x \rb .
\]
Note that $\Psi_\KK^*(\pc)=\pc$ and $\{1, \pc, \ldots, \pc^{d-1}\}$ are linearly independent over $\mathcal{O}(D_{\pa})$. Hence, up to replacing $Q_i$ by $Q_{i,0}$, which is monic, we can choose the $Q_i$ in $\mathcal{O}(D_\pa)_f \lb \x \rb$. Let $U_{\pa} \subset D_{\pa}$ be any open neighborhood of $\pa$. We set $Z = \{ \pb \in U_{\pa}; \, f(\pb)=0\}$. Note that $Q_i$ yields a power series $Q_{i,\pb} \in \C \lb \x \rb$ at each $\pb \in U_{\pa} \setminus Z$ and that $\Psi_{\pb}^{\ast}(Q_{i,\pb}) \equiv 0$, for every $i=1,\ldots, n-r$. We conclude that $\Gr(\Psi_{\pb}^{\ast}) = \Fr(\Psi_{\pb}^{\ast})$ for every $\pb \in U_{\pa} \setminus Z$ as we wanted to prove.

Now let us consider the case that $\K = \R$. Denote by $\Lambda^{\C}$ a complex open neighborhood of $\Lambda$ such that $\Lambda^{\C} \cap \R^l = \Lambda$, over which $\Psi$ admits an holomorphic extension:
\[
\Psi^{\C} : \Lambda^{\C} \times (\C^m,0) \lgw(\C^n,0).
\]
By the first part of the proof, for each $\pa \in \Lambda^{\C}$, there exists a neighborhood $U_{\pa}^{\C}$ and a complex analytic set $Z^{\C} \subset U_{\pa}^{\C}$ such that $\RR(\Psi,U_{\pa}) \supset U_{\pa }\setminus Z$. We fix a point $\pa \in \Lambda$ and we consider the neighborhood $U_{\pa} = U_{\pa}^{\C} \cap \R^l$ and the intersection $Z := Z^{\C} \cap U_{\pa}$. It is now enough to note that $Z$ is a proper real-analytic subset of $U_{\pa}$.
\end{proof}

\section{Proof of Theorem \ref{thm:PawI}} \label{sec:PawI}

We start by a well-known result, which follows from the geometrical statement of Proposition \ref{cor_extension}:

\begin{proposition}[cf. {\cite[Prop. 1]{Paw}}]\label{prop:count_ana}
Let $\Phi:\Om\subset \K^m\lgw \K^n$ be an analytic map where $\Om$ is open. Then $\Om\setminus\RR(\Phi,\Om)$ is a union of countably many analytic subsets.
\end{proposition}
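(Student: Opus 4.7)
The plan is to apply the geometric conclusion of Proposition \ref{cor_extension} iteratively, descending along a chain of analytic subsets of strictly decreasing dimension. Since $\Om$ has at most countably many connected components, I first reduce to the case where $\Om$ is connected. I then consider the admissible family
\[
\Psi : \Om \times (\K^m, 0) \lgw (\K^n,0), \qquad \Psi(\pa, \ub) := \Phi(\pa + \ub) - \Phi(\pa),
\]
which is admissible in the sense of case (1) of Definition \ref{def:admissibleFamily}. Since $\Psi_\pa$ is regular precisely when $\Phi_\pa$ is, we have $\RR(\Psi, \Om) = \RR(\Phi, \Om)$, and this set is nonempty by Lemma \ref{lem:induc0}. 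The geometric part of Proposition \ref{cor_extension} then produces a countable family $\{W^{(0)}_k\}_{k\in\N}$ of proper analytic subsets of $\Om$, each generated by global equations in $\mathcal{O}(\Om)$, with $\Om \setminus \RR(\Phi, \Om) \subset \bigcup_k W^{(0)}_k$.

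The induction proceeds on the dimension of the analytic subsets obtained. Given any proper analytic subset $W$ produced at some stage, I decompose $W$ into at most countably many irreducible analytic subspaces $Y$ whose ring of global sections $\mathcal{O}(Y)$ is integral; in the complex case this is the classical irreducible decomposition combined with Remark \ref{rk:Irreducible}, whereas in the real case one passes through a complexification argument. Each such $Y$ then fits into case (3) of Definition \ref{def:admissibleFamily} with $\Psi$ restricted to $Y \times (\K^m,0)$. The key observation is that, for $\pa\in Y$, the germ $\Psi_\pa$ is unchanged whether we view $\pa$ in $\Om$ or in $Y$, so $\RR(\Psi,Y) = Y \cap \RR(\Phi,\Om)$. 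Applying Proposition \ref{cor_extension} to $\Psi$ on $Y$ now gives two alternatives: either $\RR(\Psi, Y) = \emptyset$, in which case $Y \subset \Om \setminus \RR(\Phi, \Om)$ is itself an analytic subset of $\Om$ contributing to the desired countable decomposition; or $Y \cap (\Om \setminus \RR(\Phi, \Om))$ is contained in a countable union of proper analytic subsets of $Y$, each of strictly smaller dimension than $Y$, to which the inductive procedure is then applied again.

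Since the dimensions decrease strictly at each iteration and $\dim\Om$ is finite, the process terminates after at most $\dim\Om$ steps. The total collection of analytic subsets thus produced is a countable union (a countable union of countable families is countable), whose members are all contained in $\Om \setminus \RR(\Phi, \Om)$ and whose union covers it. The main obstacle in making this argument rigorous is the decomposition of an analytic subset $W \subset \Om$ into irreducible analytic subspaces with integral ring of global sections: this is routine in the complex setting thanks to Remark \ref{rk:Irreducible}, but in the real-analytic case it typically requires a careful local argument passing through the complexification of $W$, or an invocation of Hironaka's uniformization theorem to reduce to a smooth situation.
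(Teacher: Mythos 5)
Your argument for the complex case follows essentially the same route as the paper: apply Proposition \ref{cor_extension} to the family over $\Om$, decompose the resulting analytic sets into irreducible components, and descend via case (3) of Definition \ref{def:admissibleFamily} using Remark \ref{rk:Irreducible}. The paper packages the descent as a minimality condition (rule (R)) followed by a contradiction, whereas you run an explicit dimension-decreasing iteration; these are interchangeable. Two small points to make precise: you need to observe that $\RR(\Psi,Y) = Y \cap \RR(\Phi,\Om)$ so that the alternatives of Proposition \ref{cor_extension} really do say either $Y$ lies inside the complement or $Y \cap (\Om\setminus\RR(\Phi,\Om))$ is caught by strictly lower-dimensional analytic subsets of $Y$; and you should note that the final countable collection you build is exactly (not just an overestimate of) $\Om\setminus\RR(\Phi,\Om)$, which in your scheme follows because a zero-dimensional $Y$ meeting the complement automatically has $\RR(\Psi,Y)=\emptyset$.

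The genuine gap is the one you flag yourself at the end, and it is not a cosmetic issue. Your inductive step requires decomposing an arbitrary proper real-analytic subset $W\subset\Om$ into countably many irreducible analytic \emph{subspaces} $Y$ with $\mathcal{O}(Y)$ an integral domain, so that case (3) of Definition \ref{def:admissibleFamily} applies. But the paper explicitly recalls (via Cartan's examples) that real-analytic sets need not carry the structure of real-analytic spaces, so such a decomposition is not available in the real setting, and Remark \ref{rk:Irreducible} is stated only for complex analytic sets. Your suggested remedies — complexify $W$, or resolve it by uniformization — would not slot into the induction as stated, since the complexification of $W$ does not sit as an analytic subspace of $\Om$ and uniformization changes the ambient manifold. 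The paper sidesteps all of this with a single move: it complexifies the \emph{morphism} $\Phi$ rather than the sets. Since the generic and formal ranks of the germ at a real point $\pa$ are unchanged under complexification, one has $\Om\setminus\RR(\Phi,\Om) = \Om\cap\bigl(\Om^{\C}\setminus\RR(\Phi^{\C},\Om^{\C})\bigr)$, and the real trace of a countable union of complex-analytic sets is a countable union of real-analytic sets. You should replace your real-case discussion by this global complexification of $\Phi$; everything else in your proposal then goes through.
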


\begin{proof}
Let us first argue the case that $\K = \C$, in which case every complex analytic set is a complex analytic space. By Proposition \ref{cor_extension} applied to each connected component of $\Om$, $ X:=\Om\setminus\RR(\Phi,\Om)$ is included in the union of countably many analytic subsets $\bigcup_{i=0}^\infty Y_i$ of $\Om$. We may assume that the $Y_i$ are irreducible (in $\Om$) by replacing each $Y_i$ by its irreducible components, and we change the family $\{Y_i\}_i$ according to the following rule:
\begin{itemize}
\item[(R)] For a given $i_0$, if there is countably many irreducible analytic subspaces $Y_{i_0,k}$ of $\Om$ of dimension $<\dim(Y_{i_0})$ such that 
$X\cap Y_{i_0} \subset 
\bigcup_{k=0}^\infty Y_{i_0,k}$, we replace the family $\{Y_i\}_{i\in\N}$ by $\{Y_i\}_{i\neq i_0}\cup\{Y_{i_0,k}\}_{k\in\N}$.
\end{itemize}
By repeating this rule countably many times, we can assume that the family $\{Y_i\}_{i\in \mathbb{N}}$ is minimal in respect to $(R)$ and contains $X$. 
Now assume by contradiction that $X\neq \bigcup_{i\in\N} Y_i$. This means that there is $i_0 \in \N$ such that $Y_{i_0}\not\subset X$ but $Y_{i_0}\cap X\neq\emptyset$. By Proposition \ref{cor_extension} applied to $Y_{i_0}$ (cf. Remark \ref{rk:Irreducible} and Definition \ref{def:admissibleFamily}(3)) we have that $Y_{i_0}\cap X$ is included in a countable number of proper analytic subsets $\{Y_{i_0,k}\}_{k\in \mathbb{N}}$ of $Y_{i_0}$ that are of dimension $<\dim(Y_{i_0})$. Since $Y_{i_0}$ is an analytic subspace of $\Omega$, we conclude that each $Y_{i_0,k}$ is analytic subspace of $\Omega$, which contradicts the minimality of the family $\{Y_i\}_{i\in \mathbb{N}}$ in respect to $(R)$.

If $\K=\R$, the result follows from considering a complexification of $\Phi$, and noting that the set of regular points is non-empty by Lemma \ref{lem:induc0}.
\end{proof}

We are now ready to prove the following local version of Theorem \ref{thm:PawI}, which immediately implies it:

\begin{theorem}[Paw\l ucki Theorem I {\cite{Paw}}]\label{thm:main}
Let $\Phi:\Om\subset \K^m\lgm \K^n$ be an analytic map where $\Om$ is open. Then $\Om\setminus\RR(\Phi,\Om)$ is a proper analytic subset of $\Om$.
\end{theorem}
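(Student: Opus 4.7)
The plan is a strong induction on $l:=\dim\Lambda$, applied to the following stronger statement: for every admissible family of analytic germs $\Psi:\Lambda\times(\K^m,0)\lgw(\K^n,0)$ of type (1) or (2) with $\Lambda\subset\K^l$ a connected open set, the locus $\Lambda\setminus\RR(\Psi,\Lambda)$ is a proper analytic subset of $\Lambda$. Theorem \ref{thm:main} is recovered by taking $\Lambda=\Omega$ and $\Psi(\pa,\ub):=\Phi(\pa+\ub)-\Phi(\pa)$ (the type (1) family), whose non-regular locus equals $\Omega\setminus\RR(\Phi,\Omega)$ and is a proper subset by Lemma \ref{lem:induc0}. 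The base case $l=0$ is trivial.

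For the inductive step, fix a family $\Psi$ over $\Lambda$ with $\dim\Lambda=l$ and apply Theorem \ref{lem:AlgebraGeometryHook}. If $\RR(\Psi,\Lambda)=\emptyset$, the non-regular locus is all of $\Lambda$, trivially analytic. Otherwise, for each $\pa\in\Lambda$ we obtain a neighborhood $U_\pa\subset\Lambda$ and a proper analytic set $Z\subset U_\pa$ with $U_\pa\setminus\RR(\Psi,U_\pa)\subset Z$. Since analyticity is a local property, it suffices to establish analyticity in each such $U_\pa$, and the containment forces $\dim Z<l$.

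We now route through the Uniformization Theorem \ref{cor:uniformizationSubA}: there is a proper analytic map $\varphi:N\lgw U_\pa$ from an analytic manifold $N$ with $\dim N=\dim Z<l$ and $\varphi(N)=Z$. In each connected coordinate chart of $N$ (viewed as a connected open subset of $\K^{\dim N}$), we define the type (2) admissible family $\Psi'(\pb,\ub):=\Psi(\varphi(\pb),\ub)$. The germ $\Psi'_\pb$ coincides with $\Psi_{\varphi(\pb)}$ as a morphism of local rings, so the rank invariants match and $N\setminus\RR(\Psi',N)=\varphi^{-1}\bigl(U_\pa\setminus\RR(\Psi,U_\pa)\bigr)$. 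By the inductive hypothesis applied chart by chart, $N\setminus\RR(\Psi',N)$ is a closed analytic subset of $N$.

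Since $U_\pa\setminus\RR(\Psi,U_\pa)\subset Z=\varphi(N)$, we conclude that $U_\pa\setminus\RR(\Psi,U_\pa)=\varphi\bigl(N\setminus\RR(\Psi',N)\bigr)$, displaying the non-regular locus as the proper image of a closed analytic set; Example \ref{ex:SubanalyticProjection} makes it subanalytic, Proposition \ref{prop:count_ana} simultaneously writes it as a countable union of analytic subsets, and Lemma \ref{lem:Paw3} then upgrades it to an analytic set, closing the induction. The main obstacle is arranging an inductive setup in which the rank data is preserved: one transports $\Psi$ via the admissible family $\Psi'$ (shifting in the source of $\Phi$) rather than forming the naive composition $\Phi\circ\varphi$ (whose generic rank can collapse under $\varphi$), and uses Theorem \ref{lem:AlgebraGeometryHook} together with the Uniformization of $Z$ to make the descent on $l$ run.
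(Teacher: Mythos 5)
Your route is essentially sound for $\K=\R$, and it is a genuine variant of the paper's argument: instead of inducting on the dimension of closed subanalytic subsets $X\subset\Omega$ and proving that $X\cap\RR(\Phi,\Omega)$ is subanalytic (with the ``subanalytic $\Rightarrow$ analytic'' upgrade done once at the end), you induct on the dimension $l=\dim\Lambda$ of the parameter space of an admissible family and upgrade to analyticity inside every inductive step using Proposition~\ref{prop:count_ana} and Lemma~\ref{lem:Paw3}. The key moves --- Theorem~\ref{lem:AlgebraGeometryHook} to produce a proper analytic $Z$ containing the bad locus, uniformization of $Z$, and the observation that one should transport via $\Psi'(\pb,\ub):=\Psi(\varphi(\pb),\ub)$ (a type~(2) family, hence rank-preserving by Lemma~\ref{rk:BasicProperties}(1)) rather than compose $\Phi\circ\varphi$ --- are exactly the content the paper extracts, and your identification $U_\pa\setminus\RR(\Psi,U_\pa)=\varphi\bigl(N\setminus\RR(\Psi',N)\bigr)$ is correct. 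One small imprecision: Proposition~\ref{prop:count_ana} is stated for $\Phi$, not for admissible families; you should note that $\Lambda\setminus\RR(\Psi,\Lambda)$ is the preimage under the analytic map $\varphi_0:\Lambda\to\Omega$ of $\Omega\setminus\RR(\Phi,\Omega)$, so the countable-union structure pulls back.

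The genuine gap is that the argument as written does not treat $\K=\C$. Your descent relies on Theorem~\ref{cor:uniformizationSubA} to uniformize the complex analytic set $Z\subset U_\pa$, but that theorem produces a \emph{real}-analytic manifold $N$ and a real-analytic proper map; the resulting charts are connected open subsets of $\R^{\dim_\R N}$, not of $\C^{l'}$, so $\Psi'$ is not a type~(2) admissible family over $\K=\C$ and the induction cannot proceed in the holomorphic category. Moreover, the subanalytic-to-analytic upgrade via Lemma~\ref{lem:Paw3} only yields real analyticity unless one feeds it a countable union of \emph{complex} analytic sets. The paper addresses exactly this by opening its proof with the reduction ``By Lemma~\ref{lem:Paw4} and Proposition~\ref{prop:count_ana}, it is enough to consider the case $\K=\R$'': one uses $\RR(\Phi,\Omega)=\RR(\Phi^\R,\Omega)$ to import subanalyticity of the non-regular locus from the real case, while Proposition~\ref{prop:count_ana} applied to the holomorphic $\Phi$ exhibits it as a countable union of complex analytic sets, and Lemma~\ref{lem:Paw3} then identifies it as a locally finite union of those complex sets. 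You should either prepend this reduction, or (less in the spirit of the paper) replace the subanalytic machinery in the $\K=\C$ branch by complex resolution of singularities together with Remmert's proper mapping theorem.
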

\begin{proof}
By Lemma \ref{lem:Paw4} and Corollary \ref{prop:count_ana}, it is enough to consider the case where $\K =\R$. Furthermore, from Lemma \ref{lem:Paw3} and Proposition \ref{prop:count_ana}, it is enough to show that $\RR(\Phi,\Om)$ is a subanalytic set of $\Om$. Note that being subanalytic is a local property, so we may suppose that $\Om$ is a subanalytic open set.

We claim that for every closed subanalytic set $X\subset \Om$, the intersection $X \cap \RR(\Phi,\Om)$ is subanalytic in $\Om$. The result then follows from the Claim applied to $X = \Om$. We prove the claim by induction on the dimension of $X$.

When $\dim(X)=0$, the result immediate. Assume the Claim is proved for $d-1\geq 0$ and let $X$ be a subanalytic subset of $\Om$ of dimension $d$. Consider its equidimensional part $\Sigma^{(d)}$ and let $E = \overline{X \setminus\Sigma^{(d)}}$, which is a closed subanalytic set of dimension $< d$. By induction $E \cap \RR(\Phi,\Om)$ is a subanalytic subset of $\Om$. It is, therefore, enough to prove the claim when $X = \Sigma^{(d)}$ is an equidimensional set.

By Corollary \ref{cor:uniformizationSubA}, there exists a proper and generically immersive analytic morphism $\varphi: N \lgw X$ such that $\varphi(N) =X$. Now fix a point $\pa \in \Lambda$ and a connected open neighborhood $\Lambda_{\pa}$ of $\pa$. We consider the family of admissible morphism:
\[ 
\begin{array}{cccc}\Psi: & \Lambda_{\pa}\times (\R^m,0)& \lgw& (\R^n,0)\\ &(\pa',\ub) & \lgm & \Phi(\varphi(\pa')+\ub) - \Phi(\varphi(\pa'))
\end{array}
\]
By Theorem \ref{lem:AlgebraGeometryHook}, apart from shrinking $\Lambda_{\pa}$, we conclude that either  $\varphi(\Lambda_{\pa}) \subset \Om \setminus \RR(\Phi,\Om)$ or there exists a analytic proper set $Z_{\pa} \subset \Lambda_{\pa}$ such that $\varphi(\Lambda_{\pa}\setminus Z_{\pa}) \subset \RR(\Phi,\Om)$. Note that, since $\pa \in N$ was arbitrary and both of these properties are open, they hold globally over each different connected component of $N$. We conclude that there exist two closed subanalytic subsets $Y$ and $Z$ of $X$, such that: $Y$ is of dimension $d$ and $Y\subset \Om \setminus \RR(\Phi,\Om)$; and $Z$ is of dimension $<d$ and $X \setminus (Y \cup Z) \subset \RR(\Phi,\Om)$. The result now follows from induction applied over $Z$. 
\end{proof}

\section{Proof of Theorem \ref{sec:PawII}}  \label{sec:PawII}

We start by proving the following Corollary of the uniformization Theorem  \ref{cor:uniformizationSubA} and Theorem \ref{thm:PawI}.

\begin{proposition}\label{prop:mainPaw}
Let $X$ be a subanalytic set of a real analytic manifold $M$. Then
\begin{itemize}
\item[i)] The set $\NN(X)$ is subanalytic.
\item[ii)] $\dim(M\setminus \NN(X)) \leq \dim(X)-2$.
\end{itemize}
In particular, if $\dim(X)\leq 1$, then $\NN(X)=M$.
\end{proposition}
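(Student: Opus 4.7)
The plan is to reduce to the case of a closed, equidimensional subanalytic set $X \subset M$ of dimension $d$, then uniformize and invoke Theorem \ref{thm:PawI} together with Lemma \ref{cor:subAnalyticNashRegCondition}. By Remark \ref{rk:NNforNonClosedSet}, $\NN(X) = \NN(\overline{X})$, so I may assume $X$ is closed. Next, by the definition of the Nash property for mixed-dimension subanalytic sets, $\NN(X) = \bigcap_{k=0}^{d} \NN(\Sigma^{(k)})$, where $\Sigma^{(k)}$ is the closed equidimensional subanalytic subset of $X$ of pure dimension $k$. Since a finite intersection of subanalytic sets is subanalytic and $\dim(M \setminus \NN(X)) = \max_k \dim(M \setminus \NN(\Sigma^{(k)}))$, it suffices to prove both (i) and the sharper bound $\dim(M \setminus \NN(\Sigma)) \leq \dim(\Sigma) - 2$ for each closed equidimensional $\Sigma$.

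Assume now $X$ is closed and equidimensional of dimension $d$. By the Uniformization Theorem II (Corollary \ref{cor:uniformizationSubA}) there is a proper, generically immersive analytic morphism $\varphi \colon N \to M$ with $\dim N = d$ and $\varphi(N) = X$. By Lemma \ref{cor:subAnalyticNashRegCondition}, $X \setminus \NN(X) = \varphi(N \setminus \RR(\varphi, N))$; and by Theorem \ref{thm:PawI} applied to $\varphi$, the set $N \setminus \RR(\varphi, N)$ is a proper analytic subset of $N$, in particular closed. Properness of $\varphi$, together with the argument of Example \ref{ex:SubanalyticProjection} applied to the restriction $\varphi|_{N \setminus \RR(\varphi, N)}$, then shows that $\varphi(N \setminus \RR(\varphi, N))$ is subanalytic in $M$. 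Since $X$ is closed, $M \setminus X \subseteq M \setminus \overline{X} \subseteq \SA(X) \subseteq \NN(X)$, so $M \setminus \NN(X) = X \setminus \NN(X)$ is subanalytic, and hence so is $\NN(X)$, establishing (i).

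For (ii), the weak bound $\dim(M \setminus \NN(X)) \leq d - 1$ is immediate: every smooth top-dimensional point of $X$ is locally an analytic manifold, hence semi-analytic, hence Nash, so $X \setminus \NN(X) \subseteq X \setminus X^{(d)}$, which has dimension at most $d - 1$. The improvement to $d - 2$ is the \emph{main obstacle}. I would argue by contradiction: suppose $\pb$ is a smooth $(d-1)$-dimensional point of $X \setminus \NN(X)$. Then locally around $\pb$ the set $X \setminus \NN(X)$ is a real analytic manifold $W$ of dimension $d - 1$, and every point of $X$ close to $\pb$ not lying on $W$ is a Nash point of $X$. Using the relative compactness of a small neighborhood of the compact fiber $\varphi^{-1}(\pb) \cap (N \setminus \RR(\varphi, N))$ and the properness of $\varphi$, for a sufficiently small neighborhood $V$ of $\pb$ one obtains a finite family of semi-analytic $d$-dimensional sets whose union contains $(X \cap V) \setminus W$. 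The delicate step is to extend these local covers across the $(d-1)$-dimensional seam $W$ into a single semi-analytic $d$-dimensional set containing the germ $X_\pb$, which would place $\pb$ in $\NN(X)$ and yield the contradiction. This gluing step is the main technical obstacle and follows the strategy of \cite{BMFourier}.

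Finally, if $\dim X \leq 1$ then by (ii) we have $\dim(M \setminus \NN(X)) \leq -1$, forcing $M \setminus \NN(X) = \emptyset$; that is, $\NN(X) = M$.
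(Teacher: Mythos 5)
Your proof of part (i) and your reduction to the closed, equidimensional case agree with the paper's route, and your deduction of the weak bound $\dim(M\setminus\NN(X))\leq d-1$ is correct. The problem is that you have not actually proved the key estimate: the improvement from $d-1$ to $d-2$. You explicitly label the "gluing step" as the "main technical obstacle" and leave it as an outline — but this step \emph{is} the content of part (ii). A proposal that identifies the hard step and then defers it to \cite{BMFourier} without carrying it out is a sketch, not a proof. Moreover, the target-side gluing strategy you describe (extending finitely many local semianalytic $d$-dimensional covers across a codimension-one seam $W$ to a single semianalytic set containing $X_\pb$) is genuinely delicate and it is not clear it goes through: the local semianalytic pieces covering $X$ away from $W$ need not fit together, and without additional structure there is no reason their union extends to a semianalytic set near $\pb$.

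The paper's argument for the $d-2$ bound is quite different and avoids this gluing entirely by working on the \emph{source}. After the reduction to $X=\Sigma^{(d)}$ with the uniformization $\varphi:N\to M$, one knows $X\setminus\NN(X)=\varphi(N\setminus\RR(\varphi,N))$, and $N\setminus\RR(\varphi,N)$ is contained in the critical locus $F$ of $\varphi$. After applying resolution of singularities to make $F$ a simple normal crossings divisor, one shows for each irreducible component $E$ of $F$ that $\RR(\varphi,N)\cap E$ is dense in $E$ (or else $\varphi(E)$ already has dimension $\leq d-2$). Density is proved by an explicit construction: choosing adapted coordinates in which $\varphi^{*}(y)=u^{a}$, the paper exhibits analytic functions $P_k(x,y,z)=\prod_{i=1}^{a}\bigl(z_k-yg_k(x,\xi^{i}y^{1/a})\bigr)$ in the kernel of $\varphi^{*}_\pa$ at a generic point $\pa\in E$, where $\xi$ is a primitive $a$-th root of unity, giving $\Gr_\pa(\varphi)=\Ar_\pa(\varphi)=d$ and hence regularity. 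This produces the bound $\dim\varphi(E\setminus\RR(\varphi,N))\leq d-2$ directly. You should replace your target-side gluing sketch with an argument of this kind, or supply the missing details of the gluing; as written the proposal has a gap exactly at the statement's hardest point.
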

\begin{proof}
By remark \ref{rk:NNforNonClosedSet}, we may suppose without loss of generality that $X$ is a closed subanalytic set. First consider the equidimensional case $X= \Sigma^{(d)}$. Denote by $\varphi:N \lgw M$ the proper generically immersive analytic morphism given by Corollary \ref{cor:uniformizationSubA}, where $N$ is of dimension $d$ and $\varphi(N) = X$. In particular $\Gr(\phi)=d$. By Theorem \ref{thm:main}, $N \setminus \RR(\varphi,N)$ is a proper analytic subset of $N$. It follows from Lemma \ref{cor:subAnalyticNashRegCondition} that $X \setminus \NN(X)$ is a subanalytic set of codimension at least $1$. It remains to prove that it has codimension $2$.

Denote by $F$ the set of points in $N$ where $\varphi$ does not have maximal rank. Note that $F$ is analytic (it is given by the zero locus of the Jacobean ideal of $\varphi$) so, apart from applying resolution of singularities, we may suppose that $F$ is a simple normal crossing divisor in $N$. Now, note that $N \setminus F \subset \RR(\varphi,N)$ since $\varphi|_{N\setminus F}$ is a local submersion. It follows that $N \setminus \RR(\varphi,N) \subset F$. So, it is enough to prove that the image $\varphi(E \setminus \RR(\varphi,N))$ has dimension at most $d-2$ for every irreducible (in particular connected) component $E \subset F$. Fix such an $E$ and consider the morphism $\varphi_E = \varphi|_E: E \lgw M$. Let $r$ denote the generic rank of $\varphi_E$ and note that $r \leq d-1$ since $E$ has dimension $d-1$. If $r<d-1$, then $\varphi(E)$ is a subanalytic set of dimension at most $d-2$ and the result is clear. So we may suppose that $r= d-1$.

Fix a point $\pa \in E$ and consider a local coordinate system $(u,v) =(u,v_1,\ldots,v_{d-1})$ of $N$ centered at $\pa$ and defined in an open neighborhood $U$ of $\pa$, such that $E \cap U = (u=0)$. From the rank condition over $\varphi_E$, and the inverse function Theorem, there exists a coordinate system $(x,y,z) = (x_1,\ldots,x_{d-1},y,z_{d+1},\ldots, z_n)$ centered at $\varphi(\pa)= \pb$ such that:
\[
\varphi^{\ast}(x_i)= v_i, \quad i=1,\ldots,d-1.
\]
Now, apart from an analytic change of coordinates in the target and a permutation of $y$ and the $z_k$, we may further suppose that there exists a positive integer $a$ such that:
\[
\begin{aligned}
\varphi^{\ast}(y)&= u^{a}g_d(u,v) &&\\ 
\varphi^{\ast}(z_k) &= u^{a}g_k(u,v),& \quad &k=d+1,\ldots,n
\end{aligned}
\]
where $g_d(0,v)\not\equiv 0$. In particular, the set of points of $E\cap U$ where $g_d(0,v) \neq 0$ is an open dense set $E'$ of $E\cap U$. We claim that at every point of $E'$, $\varphi$ is a regular mapping; this claim implies that $\RR(\varphi,N)\cap E\cap U$ is a proper analytic set of $E$ and, therefore, $\varphi(E \setminus \RR(\varphi,N))$ has dimension at most $d-2$. We turn to the proof of the Claim: suppose that $\pa$ is a point in $E'$. Apart from shrinking $U$ and making a change of coordinates in the source and target, we may further suppose that:
\[
\varphi^{\ast}(y) = u^a,
\]
and we consider the following functions defined in the target:
\[
P_k(x,y,z) = \prod_{i=1}^a ( z_{k} - y g_{k}(x, \xi^{i} y^{1/a})), \quad k=d+1, \ldots, n
\]
where $\xi$ is a primitive $a$-root of unity. By construction, it is clear that $P_k \circ \varphi|_{U} \equiv 0$ for every $k=d+1,\ldots,d$. We conclude that $ \Gr_{\pa}(\varphi)=\Ar_{\pa}(\varphi)=d$ proving the claim and finishing the proof of the Theorem in the case of an equidimensional subanalytic set $X$.

We now consider a general closed subanalytic set $X$. Consider the morphisms from Corollary \ref{cor:uniformizationSubA} $\varphi_k:N_k \lgw M$, for $k=0,\ldots,d-1$. From the previous argument applied to each set $\Sigma^{(k)}$, we conclude that $M \setminus \NN(\Sigma^{(k)})$ is a subanalytic set of dimension at most $k-2$. It follows from the definition of $\NN(X)$ that:
\[
\NN(X) = \cap_{k=0}^{d} \NN(\Sigma^{(k)})
\]
which is a subanalytic set. Furthermore, its complement is equal to the union of the complements of $\NN(\Sigma^{(k)})$, and therefore is a subanalytic set of dimension at most $d-2$, finishing the proof.
\end{proof}

We are now ready to complete the proof of Theorem \ref{thm:PawII}, following an argument from \cite{BMFourier}. We start with two lemmas (see also \cite{FG2} for the study of the relations between $\SA(X)$ and $\NN(X)$ in general):

\begin{lemma}\label{lem:decomp_SA}
Let $X$ be a subanalytic set of dimension $d$. Then
\[
\SA(X)=\SA(X\setminus X^{(d)})\cap\SA(X^{(d)}).
\]
\end{lemma}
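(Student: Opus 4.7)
The statement is a bookkeeping lemma, whose proof reduces to the Boolean stability of germs of semianalytic sets combined with the fact, already recalled earlier in the paper via \cite[Remark 7.3]{BMihes}, that the smooth locus of fixed dimension $k$ of a semianalytic set is itself semianalytic. The plan is to verify the two inclusions independently, using the trivial set decomposition $X = X^{(d)} \cup (X \setminus X^{(d)})$.

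For the inclusion $\supseteq$, if $\pa$ lies in both $\SA(X^{(d)})$ and $\SA(X \setminus X^{(d)})$, then $(X^{(d)})_\pa$ and $(X \setminus X^{(d)})_\pa$ are germs of semianalytic sets. Their union $X_\pa$ is then a germ of a semianalytic set, since a finite union of semianalytic sets is semianalytic, so $\pa \in \SA(X)$.

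For the inclusion $\subseteq$, fix $\pa \in \SA(X)$ and choose, by definition of the germ, an open neighborhood $U$ of $\pa$ together with a semianalytic set $Y \subset U$ such that $X \cap U = Y$, possibly after shrinking $U$. Since smoothness of fixed dimension at a point is a purely local property, we have $X^{(d)} \cap U = Y^{(d)} \cap U$ and consequently $(X \setminus X^{(d)}) \cap U = (Y \setminus Y^{(d)}) \cap U$. The cited result ensures that $Y^{(d)}$ is semianalytic in $U$, hence so is $Y \setminus Y^{(d)}$ as a set-theoretic difference of semianalytic sets. This yields that both $(X^{(d)})_\pa$ and $(X \setminus X^{(d)})_\pa$ are germs of semianalytic sets, so $\pa \in \SA(X^{(d)}) \cap \SA(X \setminus X^{(d)})$.

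There is no genuine obstacle in the argument: the lemma exists precisely in order to separate the analysis of the top-dimensional stratum from the lower-dimensional remainder in the proof of Theorem \ref{thm:PawII}(i), so that the former can be treated by the uniformization and regularity machinery of the previous sections (which needs an equidimensional target), while the latter is handled by induction on dimension. The only point that deserves a moment of care is the identification $X^{(d)} \cap U = Y^{(d)} \cap U$, but this is immediate once $U$ is chosen small enough to realize the germ equality $X_\pa = Y_\pa$ on a full open neighborhood.
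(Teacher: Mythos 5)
Your proof is correct and follows essentially the same route as the paper: the $\supseteq$ inclusion by closure of semianalytic sets under finite unions, and the $\subseteq$ inclusion by realizing the germ $X_\pa$ as a semianalytic set $X \cap U$, invoking \cite[Remark 7.3]{BMihes} to see that $X^{(d)} \cap U = (X\cap U)^{(d)}$ is semianalytic, and then passing to the set-theoretic difference. The only cosmetic difference is that you spell out the trivial inclusion explicitly while the paper merely notes it as trivial.
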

\begin{proof}
Note that $\SA(X\setminus X^{(d)})\cap\SA(X^{(d)}) \subset \SA(X)$ is trivial. In order to prove the other inclusion, let $\pa \in \SA(X)$; in particular $X_{\pa}$ is a semi-analytic germ. Let $U$ be a sufficiently small neighborhood of $\pa$ where $X_{\pa}$ is realizable by $X \cap U$, which is semi-analytic. We recall that if $Y$ is semi-analytic, the n $Y^{(d)}$ is a semi-analytic set, see e.g. \cite[Remark 7.3]{BMihes}, so we conclude that $X^{(d)} \cap U$ is semi-analytic and $\pa \in \SA(X^{(d)})$. Since $(X\setminus X^{(d)}) \cap U = X \cap U \setminus (X^{(d)}\cap U)$, we conclude easily.
\end{proof}

\begin{lemma}[c.f. {\cite[p. 200]{BMFourier}}]\label{lem:decomp_SA2}
Let $X$ be a closed subanalytic set of equidimension $d$. Then:
\[
\SA(X^{(d)})=\SA(X \setminus X^{(d)})\cap\NN(X^{(d)}).
\]
\end{lemma}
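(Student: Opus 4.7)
The inclusion $\SA(X^{(d)}) \subset \SA(X\setminus X^{(d)})\cap \NN(X^{(d)})$ is immediate: if $X^{(d)}$ has a semianalytic representative on a neighborhood $U$ of $\pa$, then $\pa\in\NN(X^{(d)})$ because semianalytic implies Nash, and since $X$ is closed of pure dimension $d$ the intersection $X\cap U$ equals the closure of $X^{(d)}\cap U$ in $U$, which is semianalytic; hence $(X\setminus X^{(d)})\cap U$ is semianalytic as a difference of semianalytic sets.

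For the converse, let $\pa\in\SA(X\setminus X^{(d)})\cap \NN(X^{(d)})$, and pick a neighborhood $U$ of $\pa$ on which $(X\setminus X^{(d)})\cap U$ is semianalytic and $X^{(d)}\cap U\subset Y$ for some semianalytic set $Y$ of dimension $d$. The plan is to first replace $Y$ by $\overline{Y^{(d)}}$, which is closed, semianalytic, of pure dimension $d$; this set still contains $X^{(d)}$, because $X^{(d)}\setminus Y^{(d)}\subset Y\setminus Y^{(d)}$ has dimension $<d$ and hence is nowhere dense in the $d$-manifold $X^{(d)}$, so $X^{(d)}\subset \overline{X^{(d)}\cap Y^{(d)}}\subset \overline{Y^{(d)}}$. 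Since $X=\overline{X^{(d)}}$ and the new $Y$ is closed, this forces $X\cap U\subset Y$. It therefore suffices to show that $X\cap U'$ is semianalytic on some smaller neighborhood $U'$ of $\pa$, for then $X^{(d)}\cap U' = (X\cap U')\setminus (X\setminus X^{(d)})\cap U'$ will be semianalytic.

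The key step is a clopen-component argument on the smooth locus of $Y$. Shrink $U'$ so that $Y^{(d)}\cap U'$ has only finitely many connected components $C_1,\dots, C_N$; each $C_j$ is an open semianalytic smooth $d$-manifold. For each $j$, since $C_j$ is a $d$-manifold, I would check that a point $\pb\in X\cap C_j$ lies in the interior of $X\cap C_j$ inside $C_j$ iff $X$ contains a $C_j$-neighborhood of $\pb$, iff $\pb\in X^{(d)}$ (the reverse implication uses that $X\subset Y=Y^{(d)}$ locally near $\pb$, forcing $X$ to be open in $C_j$). Hence the topological boundary of $X\cap C_j$ in $C_j$ equals $(X\setminus X^{(d)})\cap C_j$, which is semianalytic. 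The complement of this boundary in $C_j$ is then open semianalytic with locally finitely many connected components $K$, and each such $K$ is either contained in $X$ or disjoint from $X$: $X\cap K$ is closed in $K$ because $X$ is closed, and open in $K$ because every point of $X\cap K$ lies in $X^{(d)}$, which is open in $X$. Consequently, $X^{(d)}\cap Y^{(d)}\cap U'$ is a locally finite union of the ``good'' components $K$, hence an open semianalytic set; and since $X^{(d)}\cap Y^{(d)}$ is dense in $X^{(d)}$ (its complement having dimension $<d$), its closure in $U'$ equals $X\cap U'$, which is therefore semianalytic.

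The main technical point is the choice of $Y$: one must arrange that $Y$ contains all of $X$, not merely $X^{(d)}$, so that the clopen analysis can be run inside the smooth locus $Y^{(d)}$. This is what the replacement $Y\mapsto \overline{Y^{(d)}}$ together with the density of $X^{(d)}\cap Y^{(d)}$ in $X^{(d)}$ accomplish. Once this setup is in place, the remaining arguments — local finiteness of connected components of semianalytic sets, stability of semianalyticity under closure, and the identification of the interior of $X\cap C_j$ in $C_j$ with $X^{(d)}\cap C_j$ — are standard.
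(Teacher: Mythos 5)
Your proof is correct and follows essentially the same strategy as the paper's: after the easy inclusion, reduce to showing that $X$ itself is semianalytic near $\pa$, by observing that after removing the boundary-type set $X\setminus X^{(d)}$ together with the non-smooth points of a $d$-dimensional container, $X$ becomes open and closed (hence a union of connected components) in the smooth locus of that container, so is semianalytic, and its closure recovers $X$. The only cosmetic difference is that the paper passes from the semianalytic bound $Y$ given by the Nash hypothesis to an \emph{analytic} set $Z$ of dimension $d$ and removes $\operatorname{Sing}(Z)\cup(X\setminus X^{(d)})$, whereas you run the clopen argument directly inside the smooth semianalytic locus $Y^{(d)}$ of $\overline{Y^{(d)}}$; both variants then conclude via Lemma~\ref{lem:decomp_SA} (resp.\ your final subtraction of $X\setminus X^{(d)}$).
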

\begin{proof}
Clearly we have $\SA(X^{(d)})\subset \NN(X^{(d)})$. Moreover,  if $\pa\in \SA(X^{(d)})$, then $X^{(d)}_\pa$ is semianalytic, so its closure, which is $X_{\pa}$, is semianalytic and $X_{\pa} \setminus X^{(d)}_{\pa}$ is semianalytic. Thus $\SA(X^{(d)})\subset \SA(Y)\cap\NN(X^{(d)})$.

In order to prove the other inclusion, let $\pa\in\SA(Y)\cap\NN(X^{(d)})$ where $Y=X \setminus X^{(d)}$. Since the result is local, apart from replacing $M$ by a sufficiently small neighborhood of $\pa$, we may suppose that $Y$ is semianalytic and that there exists  a closed analytic set $Z $ of dimension $d$ such that $X^{(d)} \subset Z$; we conclude that $X \subset Z$. Let $\mbox{Sing}(Z)$ denote the singular points of $Z$. It follows that $X \setminus (Y \cup \mbox{Sing}(Z))$ is open and closed in $Z \setminus (Y \cup \mbox{Sing}(Z))$ and, thus, $X \setminus (Y \cup \mbox{Sing}(Z))$ is semi-analytic. Since the closure of this set is equal to $X$, we conclude that $X$ is semianalytic, and we conclude by Lemma \ref{lem:decomp_SA}. 
 \end{proof}

\begin{proof}[Proof of Theorem \ref{thm:PawII}]
Because of Proposition \ref{prop:mainPaw}, it only remains to show that $ \SA(X)$ is a subanalytic set whose complement is of dimension at most $d-2$. We prove this result by induction on the dimension of $X$; the case that $d=0$ being trivial. So, fix a subanalytic set $X$ of dimension $d$ and consider the set $Y=X \setminus X^{(d)}$, which is a subanalytic set of dimension at most $d-1$. By Lemmas \ref{lem:decomp_SA} and \ref{lem:decomp_SA2} we get:
\[
\SA(X) = \SA(Y) \cap \SA(X^{(d)}) = \SA(Y) \cap \SA(\overline{X^{(d)}}\setminus X^{(d)})\cap\NN(X^{(d)}). 
\]
By induction applied to $Y$ and $\overline{X^{(d)}}\setminus X^{(d)}$, and by Proposition \ref{prop:mainPaw} applied to $X^{(d)}$, we conclude that $\SA(X)$ is a subanalytic set whose complement has dimension smaller or equal to $d-2$.
\end{proof}

We finish this section by proving the following corollary:

\begin{corollary} Let $X\subset \R^n$ be a finitely subanalytic set. Then $\NN(X)$ and $\SA(X)$ are finitely subanalytic.
\end{corollary}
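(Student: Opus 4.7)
The plan is to transport the problem through the semialgebraic diffeomorphism $\pi_n \colon \R^n \lgw B$, where $B$ is the open unit ball, and then reduce to Theorem \ref{thm:PawII}. First I would observe that $\pi_n(\x)=\x/\sqrt{1+\|\x\|^2}$ and its inverse $\y\lgm \y/\sqrt{1-\|\y\|^2}$ are both real analytic on their respective domains, so $\pi_n$ is in fact an analytic diffeomorphism from $\R^n$ onto $B$. Since semianalyticity and Nash-ness of a germ are local properties of the ambient analytic manifold and are invariant under analytic isomorphisms of neighborhoods, for every $\pa\in\R^n$ the germ $X_{\pa}$ is semianalytic (resp.\ Nash) if and only if the germ $\pi_n(X)_{\pi_n(\pa)}$ is semianalytic (resp.\ Nash) in $\R^n$.

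From this germ-level equivalence I would deduce the two identities
\[
\pi_n(\NN(X)) = \NN(\pi_n(X)) \cap B, \qquad \pi_n(\SA(X)) = \SA(\pi_n(X)) \cap B,
\]
where the right-hand sides are considered inside the ambient manifold $\R^n$. Points of $\R^n\setminus B$ are simply discarded by the intersection with $B$, and $\pi_n$ bijects $\R^n$ with $B$ via a local analytic isomorphism at every point, so both identities are immediate from the germ correspondence above.

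Finally, since $X$ is finitely subanalytic, $\pi_n(X)$ is a subanalytic subset of $\R^n$. Theorem \ref{thm:PawII} then ensures that $\NN(\pi_n(X))$ and $\SA(\pi_n(X))$ are subanalytic subsets of $\R^n$. The open unit ball $B$ is semialgebraic, hence subanalytic, and the intersection of two subanalytic subsets of $\R^n$ is subanalytic, so $\pi_n(\NN(X))$ and $\pi_n(\SA(X))$ are subanalytic in $\R^n$. By definition of finitely subanalytic, this means precisely that $\NN(X)$ and $\SA(X)$ are finitely subanalytic, completing the argument.

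There is no genuine obstacle in this proof: the content is entirely in Theorem \ref{thm:PawII}, and the corollary is just formal transport of structure through $\pi_n$. The only point that requires a moment of verification is the germ-level correspondence along with the set-theoretic identities displayed above; both are routine once one has noted that $\pi_n$ is a global analytic diffeomorphism onto~$B$.
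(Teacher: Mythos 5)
Your proof is correct and follows essentially the same route as the paper: transport through $\pi_n$, note the identities $\pi_n(\NN(X)) = \NN(\pi_n(X)) \cap \pi_n(\R^n)$ (and similarly for $\SA$), and invoke Theorem \ref{thm:PawII}. Your observation that $\pi_n$ is in fact an analytic diffeomorphism onto the open unit ball is a useful clarification of why the germ-level correspondence holds, but the argument is otherwise identical to the paper's.
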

\begin{proof} Let us denote by $\pi$ the map
$$
\x\in\R^n\lgm \left(\frac{x_1}{\sqrt{1+\|x\|^2}},\ldots, \frac{x_n}{\sqrt{1+\|x\|^2}}\right)\in\R^n.
$$
By hypothesis the image $Y = \pi(X)$ is a subanalytic set. By Theorem \ref{thm:PawII} $\NN(Y)$ is subanalytic. Furthermore, since $\pi$ is a semialgebraic diffeomorphism, we conclude that $\pi(\NN(X)) = \NN(Y) \cap \pi(\mathbb{R}^n)$ is subanalytic, which proves that $\NN(X)$ is finitely subanalytic.

The proof that $\SA(X)$ is finitely subanalytic is identical.
\end{proof}

\section{Proof of Proposition \ref{cor_extension}}\label{sec:Extention}

\subsection{Extension Lemma} The goal of this subsection is to prove the following:

\begin{lemma}[Extension Lemma]\label{cor_linear_equations}
Let $\Psi:\Lambda\times(\K^m,0)\lgw (\K^n,0)$ be an admissible family of analytic map germs (see Definition \ref{def:admissibleFamily}) and let $\LLL$ be the field of fractions of $\O(\Lambda)$. Let $(\x,y)$ be a coordinate system of $(\K^n,0)$ where $y$ is a distinguished variable. Let $U$ be an open and connected subset of $\Lambda$ and suppose that there exists a polynomial in $y$
\[
f(\x,y)=y^d+a_1(\pa,\x)y^{d-1}+\cdots+a_d(\pa,\x)
\]
such that
\begin{enumerate}
\item[i)] $a_i(\pa,\x)\in\O(U)\lb \x\rb$, $i=1,\ldots,d$;
\item[ii)] $a_i(\cdot,0)\equiv 0$ on $U$, $i=1,\ldots,d$;
\item[iii)] for all $\pa\in U$, $f(\pa,\x,y)$ is a generator of $\Ker(\wdh{\Psi}_\pa^*)$.
\end{enumerate}
Let us write $a_i(\pa,\x)=\sum_{\b\in\N^{n-1}} a_{i,\b}(\pa)\x^\b$.
Then, for every $i$ and $\b$, there is a proper global analytic subset $Z_{i,\b}\subsetneq \Lambda$ such that $a_{i,\b}$ extends on $\Lambda\setminus Z_{i,\b}$ as an analytic function $\ovl a_{i,\b}\in\LLL$. Moreover if we set
$$\ovl f:=y^d+\sum_{\b\in\N^{n-1}} \ovl a_{1,\b}(\pa)\x^\b y^{d-1}+\cdots+\sum_{\b\in\N^{n-1}} \ovl a_{d,\b}(\pa)\x^\b\in\LLL\lb \x\rb[y]$$
then $f(\x,y)\in\Ker(\Psi_\LLL^*)$.
\end{lemma}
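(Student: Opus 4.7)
The plan is to recast condition (iii) as an infinite system of $\mathcal{O}(\Lambda)$-linear equations on the unknowns $a_{i,\beta}(\pa)$ and then to solve it by a Cramer-type argument. Expanding
$$\Psi^{*}(\x)^{\beta}\,\Psi^{*}(y)^{d-i} = \sum_{\gamma\in\N^m} C_{i,\beta,\gamma}(\pa)\,\ub^{\gamma}, \qquad \Psi^{*}(y)^{d} = \sum_{\gamma} D_{\gamma}(\pa)\,\ub^{\gamma},$$
with all coefficients $C_{i,\beta,\gamma}, D_\gamma \in \mathcal{O}(\Lambda)$, the condition $\wdh{\Psi}_\pa^{*}(f)=0$ for every $\pa\in U$ becomes, coefficient by coefficient in $\ub$,
$$(E_\gamma):\qquad D_\gamma(\pa)+\sum_{i,\beta} a_{i,\beta}(\pa)\,C_{i,\beta,\gamma}(\pa)=0 \text{ on } U, \quad \forall\gamma\in\N^m.$$
Since $\Psi^{*}(x_j)$ and $\Psi^{*}(y)$ vanish at $\ub=0$, each $(E_\gamma)$ is a \emph{finite} linear combination of the $a_{i,\beta}$ with $|\beta|$ bounded in terms of $|\gamma|$.

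Fix $N\in\N$ and collect the equations $(E_\gamma)$ for $\gamma$ in a finite subset $\Gamma_N \subset \N^m$ chosen large enough that the resulting subsystem involves only the $s_N$ unknowns $a_{i,\beta}$ with $i\leq d$ and $|\beta|\leq N$. This yields a finite linear system $A_N(\pa)\,\mathbf{a}^{(N)}(\pa) = -\mathbf{D}_N(\pa)$ with entries in $\mathcal{O}(\Lambda)$. The crucial claim is that $\Gamma_N$ can be further enlarged so that $A_N$ has full column rank $s_N$ as a matrix over $\LLL$. Were this to fail, there would exist a nonzero vector $(v_{i,\beta})\in\LLL^{s_N}$ with $\sum_{i,\beta}v_{i,\beta}\, C_{i,\beta,\gamma}=0$ in $\LLL$ for every $\gamma \in \N^m$; equivalently, the polynomial $g(\x,y) := \sum_{i,\beta} v_{i,\beta}\,\x^{\beta}\,y^{d-i} \in \LLL\lb\x\rb[y]$, of $y$-degree $\leq d-1$, would satisfy $\Psi_{\LLL}^{*}(g) = 0$. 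Clearing a common denominator $h\in\mathcal{O}(\Lambda)\setminus\{0\}$, the polynomial $hg$ has coefficients in $\mathcal{O}(\Lambda)$; since $\mathcal{O}(\Lambda)$ is an integral domain and $U$ is open in $\Lambda$, we may pick $\pa\in U$ with $h(\pa)\neq 0$ and not all $hv_{i,\beta}(\pa)$ zero. Specializing then produces a nonzero element of $\Ker(\wdh{\Psi}_{\pa}^{*})$ of $y$-degree $\leq d-1$, contradicting (iii), which identifies this kernel with the principal ideal generated by $f(\pa,\x,y)$ of $y$-degree exactly $d$.

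Once full column rank is secured, Cramer's rule expresses each $a_{i,\beta}$, $|\beta|\leq N$, as a ratio $\ovl a_{i,\beta} = P_{i,\beta}/Q_N$ of $s_N\times s_N$ minors of the augmented system, both belonging to $\mathcal{O}(\Lambda)$. Uniqueness of the solution makes these expressions compatible across values of $N$. Since a nonzero element of $\mathcal{O}(\Lambda)$ cannot vanish identically on the open set $U$, the identity $Q_N\,a_{i,\beta} = P_{i,\beta}$ on $U$ realises $\ovl a_{i,\beta}\in\LLL$ as the meromorphic extension of $a_{i,\beta}$, defined on the complement of the proper global analytic subset $Z_{i,\beta}\subset\{Q_N=0\}$ of $\Lambda$.

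Finally, to verify $\ovl f\in\Ker(\Psi_{\LLL}^{*})$, we examine the coefficient of $\ub^\gamma$:
$$[\ub^\gamma]\,\Psi_{\LLL}^{*}(\ovl f) = D_\gamma + \sum_{i,\beta} \ovl a_{i,\beta}\,C_{i,\beta,\gamma} \in \LLL,$$
which is a finite sum. Clearing denominators yields an element of $\mathcal{O}(\Lambda)$ that, by construction of the $\ovl a_{i,\beta}$ together with equation $(E_\gamma)$, vanishes identically on $U$, hence vanishes everywhere on $\Lambda$ by the identity principle; this forces the coefficient itself to be zero in $\LLL$. The principal obstacle lies in the rank claim of the second paragraph: one must simultaneously enlarge $\Gamma_N$ to extract sufficiently many independent equations and then exploit the principality together with the $y$-degree condition of (iii) to rule out spurious relations of lower $y$-degree.
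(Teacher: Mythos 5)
Your setup is the same as the paper's, and your contradiction argument correctly shows that the \emph{infinite} coefficient matrix $(C_{i,\beta,\gamma})$ has $\LLL$-linearly independent columns: the point $\pa$ at which you specialize is found from the nonvanishing locus of $h\cdot\prod(hv_{i,\beta})$ in $U$, and Weierstrass division against the degree-$d$ generator of $\Ker(\wdh{\Psi}_\pa^*)$ rules out a nonzero relation of $y$-degree $\leq d-1$. The gap is in passing from this infinite-dimensional independence to the finite Cramer system. To certify independence of the columns $C_{i,\beta,\cdot}$ with $|\beta|\leq N$, you generally need rows indexed by $\gamma$ with $|\gamma|$ much larger than $N$; but for such $\gamma$, the equation $(E_\gamma)$ also involves unknowns $a_{i,\beta}$ with $|\beta|>N$, so the ``system in the unknowns $\mathbf{a}^{(N)}$'' is not self-contained and Cramer's rule would only express $a_{i,\beta}$, $|\beta|\leq N$, in terms of $D_\gamma$ \emph{and} the higher, still-unknown $a_{i,\beta}$. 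Conversely, if you insist (as you do) that $\Gamma_N$ be chosen so that only the $s_N$ unknowns appear, this essentially forces $|\gamma|\leq N$, and the restricted matrix can easily fail to have full column rank: if, say, $\Psi^*(y)$ vanishes to high order at $\ub=0$, then for small $|\gamma|$ all columns $C_{i,\beta,\gamma}$ with $i<d$ are identically zero. So the two requirements in your second paragraph are in genuine conflict, and the ``further enlarge $\Gamma_N$'' step does not resolve it.

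The paper's proof supplies exactly what is missing. It invokes Chevalley's Lemma (Proposition \ref{chevalley}) in the form of Corollary \ref{app1}: there is a function $\mu$ such that truncating the system at level $\ell=\mu(k)$ uniquely determines the coefficients $a_{i,\beta}$ with $|\beta|\leq k$, even though the truncated system $M^{(\ell)}\cdot A^{(\ell)}+B^{(\ell)}=0$ is grossly underdetermined (the matrix $M^{(\ell)}$ is $s_\ell\times ds_\ell$). This quantitative control replaces your unjustified full-column-rank claim. Moreover, rather than demanding full column rank, the paper proves the strictly weaker statement (Claim \ref{cl:1}) that, for each fixed target index $(i_0,\beta_0)$, the column $M^{(\ell)}_{i_0,\beta_0}$ lies outside the span of the remaining columns --- a property that follows from the uniqueness supplied by Corollary \ref{app1} --- and this already suffices to isolate $a_{i_0,\beta_0}$ by a Cramer-type computation with a nonvanishing minor $\Delta_0\in\mathcal{O}(\Lambda)$. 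Without a Chevalley-type bound, there is no way to choose a finite truncation that both closes the system and pins down the coefficient, so this step cannot be omitted.
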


The proof of this result is strongly inspired by the proof of \cite[Lemme 6.3]{Pawthesis}, and is based on Chevalley's Lemma:

\begin{proposition}[Chevalley's Lemma]\label{chevalley}\cite[Lemma 7]{Ch}
Let $\k$ be a field.
Let $\phi:\k\lb \x\rb\lgw \k\lb \ub\rb$ be a morphism of formal power series rings. Then there exists a function $\la:\N\lgw \N$ such that
$$\forall k\in\N,\ \phi^{-1}((\ub)^{\la(k)})\subset (\x)^k+\Ker(\phi).$$
The smallest function satisfying this property is called the \emph{Chevalley's function} of $\phi$, and is denoted by $\la_\phi$.
\end{proposition}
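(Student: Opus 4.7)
The plan is to reformulate the condition $\wdh\Psi_\pa^*(f)=0$ for $\pa\in U$ as an infinite $\LLL$-linear system in the unknowns $a_{i,\b}(\pa)$ whose coefficients are globally defined analytic functions on $\Lambda$, and then to extend each coefficient from $U$ to the complement of a proper global analytic subset of $\Lambda$ via Cramer's rule. Expanding
\[
\wdh\Psi_\pa^*(\x^\b y^{d-i}) \,=\, \sum_{\g\in\N^m} Q_{i,\b,\g}(\pa)\,\ub^\g,
\]
each $Q_{i,\b,\g}$ is a polynomial in the global Taylor coefficients $F_{j,\g'}\in\O(\Lambda)$ of $\Psi$ and hence lies in $\O(\Lambda)$. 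Since $\wdh\Psi_\pa^*(x_j),\wdh\Psi_\pa^*(y)\in(\ub)$, one has $Q_{i,\b,\g}\equiv 0$ on $\Lambda$ whenever $|\b|+(d-i)>|\g|$, so the system
\[
\sum_{(i,\b)} a_{i,\b}(\pa)\,Q_{i,\b,\g}(\pa) \,=\, -Q_{0,0,\g}(\pa), \qquad \g\in\N^m,
\]
has each equation involving only finitely many unknowns, and by hypothesis (iii) the $a_{i,\b}$ provide an $\O(U)$-valued solution.

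\medskip

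The central technical step, which I expect to be the main obstacle, is the following uniqueness claim: for any finite set $S$ of pairs $(i,\b)$, the column vectors $(Q_{i,\b,\g})_\g$ for $(i,\b)\in S$ are $\LLL$-linearly independent. Supposing $\sum_{(i,\b)\in S} c_{i,\b} Q_{i,\b,\g}=0$ in $\LLL$ for every $\g$, with $c_{i,\b}\in\LLL$, I will form
\[
g \,:=\, \sum_{(i,\b)\in S} c_{i,\b}\,\x^\b\,y^{d-i}\,\in\,\LLL\lb\x\rb[y],
\]
a polynomial of $y$-degree strictly less than $d$ with $\wdh\Psi_\LLL^*(g)=0$. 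Specializing at any $\pa$ in the (nonempty open) common regular domain of the $c_{i,\b}$'s intersected with $U$ yields $g(\pa,\x,y)\in\Ker(\wdh\Psi_\pa^*)=(f(\pa,\x,y))$; since $f(\pa)$ is a monic Weierstrass polynomial of degree $d$ in $y$, Weierstrass division forces $g(\pa)=0$, and hence every $c_{i,\b}$ vanishes on a nonempty open subset of $\Lambda$, so $c_{i,\b}=0$ in $\LLL$. Hypothesis (ii) and the monic form of $f$ enter decisively at this point.

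\medskip

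Given the uniqueness claim, combined with Chevalley's Lemma (Proposition~\ref{chevalley}) applied to $\wdh\Psi_\LLL^*$ to control the relevant orders, the extension is obtained by Cramer's rule. Fixing $(i_0,\b_0)$, I will choose a finite set $S\ni(i_0,\b_0)$ and indices $\g_1,\dots,\g_{|S|}\in\N^m$ making the square submatrix $M=(Q_{i,\b,\g_k})_{(i,\b)\in S,\,k}$ invertible over $\LLL$, i.e.\ with $D:=\det(M)\in\O(\Lambda)$ nonzero. Setting $Z_{i_0,\b_0}:=\{D=0\}\subsetneq\Lambda$, Cramer's rule applied to the square subsystem $M\cdot x=-(Q_{0,0,\g_k})_k$ produces $\LLL$-valued functions $\ovl a_{i,\b}$, $(i,\b)\in S$, each regular on $\Lambda\setminus Z_{i_0,\b_0}$; uniqueness of the solution of this invertible square system then shows that $\ovl a_{i_0,\b_0}$ coincides with $a_{i_0,\b_0}$ on $U\setminus Z_{i_0,\b_0}$, providing the required extension.

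\medskip

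Finally, to verify $\ovl f\in\Ker(\Psi_\LLL^*)$, for each $\g\in\N^m$ the coefficient
\[
[\ub^\g]\,\wdh\Psi_\LLL^*(\ovl f) \,=\, Q_{0,0,\g} + \sum_{(i,\b)\,:\,|\b|+(d-i)\leq|\g|}\ovl a_{i,\b}\,Q_{i,\b,\g}\,\in\,\LLL
\]
is a finite $\LLL$-linear combination, regular outside a proper analytic subset of $\Lambda$. On the nonempty intersection of its regular locus with $U$, it coincides with $[\ub^\g]\,\wdh\Psi_\pa^*(f)=0$ by assumption (iii); by analytic continuation it vanishes in $\LLL$, giving $\wdh\Psi_\LLL^*(\ovl f)=0$.
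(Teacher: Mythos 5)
The statement you were asked to prove is Chevalley's Lemma (Proposition \ref{chevalley}): for a morphism $\phi:\k\lb\x\rb\lgw\k\lb\ub\rb$ of formal power series rings there is a function $\la$ with $\phi^{-1}((\ub)^{\la(k)})\subset(\x)^k+\Ker(\phi)$. Your proposal never engages with this statement: what you sketch is a proof of the Extension Lemma \ref{cor_linear_equations} (the infinite linear system in the unknowns $a_{i,\b}$, extension by Cramer's rule, verification that $\ovl f\in\Ker(\Psi_\LLL^*)$). Moreover you explicitly invoke Proposition~\ref{chevalley} ``to control the relevant orders'', so read as a proof of that Proposition the argument is circular, and read literally it proves a different result. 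Note that the paper does not reprove Chevalley's Lemma at all; it is quoted from \cite[Lemma 7]{Ch}. A self-contained proof would have to argue on the complete local ring $A=\k\lb\x\rb/\Ker(\phi)$: the decreasing chain of ideals $\pq_k$ given by the images of $\phi^{-1}((\ub)^k)$ has zero intersection (since $\bigcap_k(\ub)^k=0$ in $\k\lb\ub\rb$, one has $\bigcap_k\phi^{-1}((\ub)^k)=\Ker(\phi)$), and Chevalley's intersection theorem for complete Noetherian local rings then yields, for each $k$, some $\la(k)$ with $\pq_{\la(k)}\subset\m_A^k$; nothing in your text performs, or replaces, this step.

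For what it is worth, even as a blind proof of Lemma \ref{cor_linear_equations} your sketch has a gap exactly where the Chevalley function is needed. The equations indexed by $\g_1,\dots,\g_{|S|}$ of your square subsystem involve unknowns $a_{i,\b}$ with $(i,\b)\notin S$, so Cramer's rule expresses $\ovl a_{i_0,\b_0}$ in terms of coefficients that are themselves unknown off $U$; linear independence of the finitely many columns indexed by $S$ does not remove that dependence. The paper's proof addresses precisely this point: using the approximation Corollary \ref{app1} (which is where $\la_\phi$ enters quantitatively, through the truncation level $\ell=\mu_\pa(k)$), it shows that the column attached to $(i_0,\b_0)$ lies outside the span of \emph{all} the other columns of the truncated system, so the linear form $L_{i_0,\b_0}$ multiplying that column vanishes identically and $a_{i_0,\b_0}$ is recovered by Cramer's rule without reference to the remaining unknowns.
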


We start by fixing notation and by proving a Corollary of Chevalley's Lemma. Let  $\k$ be a field and $\phi:\k\lb \x\rb\lgw \k\lb \ub\rb$ be a morphism of formal power series rings. We set $\x':=(x_1,\ldots, x_{n-1})$. Let us consider the images of the $x_i$ by $\phi$: 
\[
\phi_i=\sum_{\a\in \N^m} \phi_{i,\a}\ub^\a
\]
where the $\phi_{i,\a}\in\k$. Let 
\begin{equation}\label{eq:universalpolynomial}
F(x):=x_n^d +A_1(\x')x_n^{d-1}+\cdots+A_d(\x')
\end{equation}
where the $A_i$ are universal power series
\begin{equation}\label{eq:universalpowerseries}
A_i:=\sum_{\b\in\N^{n-1}}A_{i,\b}\x'^\b
\end{equation}
and the $A_{i,\b}$ are new indeterminates. Then we can expand
$$
F(\phi_1,\ldots, \phi_m)=\sum_{\g\in\N^m}F_{\g}\ub^\g
$$
where 
$$
F_\g=\sum_{i,\b}  M_{\g,i,\b}A_{i,\b}+B_\g
$$
with $M_{\g,i,\b}$ and $B_\g$ polynomials in the $\phi_{j,\a}$.

Let $R$ be a ring. Then the system of linear equations
\begin{equation}\tag{$S_\infty$}\label{eq_infinity}\forall \g\in\N^m,\ \ \  F_\g(A_{i,\b})=0\end{equation} has a  solution $(a_{i,\b})\in R^\N$ if and only if $\Ker(\phi)$ contains a non zero Weierstrass polynomial
\begin{equation}\label{eqW}
f=x_n^d+a_1(\x')x_n^{d-1}+\cdots+a_d(\x'),
 \text{ where }a_i(\x')=\sum_{\b\in\N^{n-1}}a_{i,\b}{\x'}^{\b}.
 \end{equation}
  Let us consider the systems of linear equations
\begin{equation}\tag{$S_k$}\label{eq_S_k}\forall \g\in\N^m, |\g|< k,\ \ \  F_\g(A_{i,\b})=0\end{equation} 
where $k$ runs over $\N$. We have

\begin{corollary}[Approximation]\label{app1}
Let $\k$ be a field. Assume that $f$, given as in \eqref{eqW}, is a generator of $\Ker(\phi)$. Then  $(a_{i,\b})$ is the unique solution of \eqref{eq_infinity} in $\k^\N$. Moreover, there is a function $\mu:\N\lgw \N$ such that, for all
$k\in\N$, all solutions $(\wdt a_{i,\b})\in\k^{\N}$ of $(S_{\mu(k)})$ satisfies
$$
\forall \b\in\N^n, \ |\b|\leq k \Longrightarrow \wdt a_{i,\b}=a_{i,\b}.
$$
\end{corollary}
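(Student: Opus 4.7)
My plan is to treat the two assertions of the corollary separately. For uniqueness, suppose $(\wdt a_{i,\b})$ is a solution of $(S_\infty)$; then $\wdt f := x_n^d + \sum_i \wdt a_i(\x') x_n^{d-i}$ is a Weierstrass polynomial of degree $d$ in $\Ker(\phi) = (f)$, so $\wdt f = g f$ with $g \in \k\lb\x\rb$. Evaluating at $\x'=0$ yields $x_n^d = g(0,x_n) \cdot x_n^d$ in $\k\lb x_n\rb$, so $g(0,x_n) = 1$ and in particular $g$ is a unit. The uniqueness of Weierstrass preparation applied to $\wdt f$ (which admits the two factorizations $1 \cdot \wdt f$ and $g \cdot f$, both as a unit times a Weierstrass polynomial of degree $d$) then forces $g = 1$, hence $\wdt f = f$.

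For the function $\mu$, the approach combines Chevalley's Lemma with a structural analysis of the quotient ring $R := \k\lb\x\rb/(f)$. Let $\la := \la_\phi$ denote Chevalley's function of $\phi$, and set $\mu(k) := \la(d(k+1))$. Given a solution $(\wdt a_{i,\b})$ of $(S_{\mu(k)})$, I form $p := \wdt f - f \in \k\lb\x'\rb[x_n]$; its $x_n$-degree is at most $d-1$ since the $x_n^d$ terms cancel, and because $\phi(f) = 0$, the condition $(S_{\mu(k)})$ translates to $\phi(p) = \phi(\wdt f) \in (\ub)^{\mu(k)}$. Chevalley's Lemma then yields $p \in (\x)^{d(k+1)} + (f)$; equivalently, the image $\bar p$ of $p$ in $R$ lies in the ideal $((\x)R)^{d(k+1)}$.

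The main obstacle, which requires the Weierstrass polynomial structure of $f$, is to translate this ideal-theoretic bound into coefficient-wise control in the free $\k\lb\x'\rb$-basis $\{1, x_n, \ldots, x_n^{d-1}\}$ of $R$. I plan to establish the key lemma $((\x)R)^m \subset (\x')^{\lfloor m/d \rfloor} R$: the Weierstrass relation $x_n^d \equiv -\sum_i a_i x_n^{d-i} \pmod{f}$ with $a_i \in (\x')$ gives, by induction on $b$, that $x_n^b \in (\x')^{\lfloor b/d \rfloor} R$; expanding $((\x)R)^m = ((\x')R + x_n R)^m = \sum_{s+t=m} (\x')^s x_n^t R$ then yields the bound, since $\min_{s+t=m}(s + \lfloor t/d \rfloor) = \lfloor m/d \rfloor$. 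Applying this inclusion gives $\bar p \in (\x')^{k+1} R = \bigoplus_{j=0}^{d-1} (\x')^{k+1} x_n^j$; and because $p$ has $x_n$-degree $<d$, it coincides with its canonical representative in $R$, so each coefficient $\wdt a_i - a_i$ belongs to $(\x')^{k+1}$, producing $\wdt a_{i,\b} = a_{i,\b}$ for all $|\b| \leq k$.
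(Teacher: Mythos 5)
Your proof is correct, and it takes a genuinely different route from the paper's. Both begin the same way: uniqueness follows from uniqueness in Weierstrass preparation, and $\phi(\wdt f)\in(\ub)^{\mu(k)}$ combined with Chevalley's Lemma gives $\wdt f - f \in (\x)^{N}+\Ker(\phi)$ for an appropriate $N$. Where you diverge is in converting this ideal membership into coefficient-wise control. The paper introduces the weighted monomial valuation $\nu$ (weight $d+1$ on $\x'$, weight $1$ on $x_n$), observes $\ini(f)=x_n^d$, and runs a recursion on the number of terms in $\sum_i(\wdt a_i-a_i)x_n^{d-i}$, losing a factor of $(d+1)$ in the exponent at each of up to $d$ steps; this is why the paper takes $\mu(k)=\la_\phi((d+1)^d(k+1))$. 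You instead pass to $R=\k\lb\x\rb/(f)$, use that Weierstrass division makes $R$ a free $\k\lb\x'\rb$-module on $1,x_n,\ldots,x_n^{d-1}$, and prove directly $((\x)R)^m\subset(\x')^{\lfloor m/d\rfloor}R$ from the relation $x_n^d\equiv -\sum_i a_ix_n^{d-i}$ with $a_i\in(\x')$. Because $p=\wdt f-f$ has $x_n$-degree $<d$ and hence equals its own canonical representative, the membership $\bar p\in(\x')^{k+1}R=\bigoplus_{j<d}(\x')^{k+1}x_n^j$ reads off each coefficient at once, avoiding the recursion. This is cleaner and yields the sharper function $\mu(k)=\la_\phi(d(k+1))$ instead of $\la_\phi((d+1)^d(k+1))$; both are perfectly adequate since only the existence of some $\mu$ is needed downstream, but your module-theoretic route is more transparent and gives a better explicit bound.
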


\begin{proof}
Let $(\wdt a_{i,\b})$ be a solution of \eqref{eq_infinity}. Then $$\wdt f :=x_n^d+\sum_{\b\in\N^{n-1}}\wdt a_{1,\b}\x^\b x_n^{d-1}+\cdots+\sum_{\b\in\N^{n-1}}\wdt a_{d,\b}\x^\b\in \Ker(\phi).$$
Since $f$ is a generator of $\Ker(\phi)$, there is $g\in\k\lb \x\rb$ such that $\wdt f=fg$. Since $f$ and $\wdt f$ are Weierstrass polynomials, by the uniqueness of the decomposition of a series as a product of a Weierstrass polynomials with a unit, we have that $g=1$ and $\wdt f=f$. This shows that $(a_{i,\b})$ is the unique solution of \eqref{eq_infinity}. Next, for $k\in\N$ we set 
\[
\mu(k)=\la\left((d+1)^d(k+1)  \right)
\]
where $\la$ is given in Proposition \ref{chevalley}. Consider a solution $(\wdt a_{i,\b})\in\k^{\N}$ of $(S_{\mu(k)})$. Set
\[
\wdt f:=x_n^d+\wdt a_1(\x')x_n^{d-1}+\cdots+\wdt a_d(\x'), \quad \text{ where }\quad \wdt a_i:=\sum_{\b\in\N^{n-1}}\wdt a_{i,\b}\x^\b, \, i=1,\ldots,d.
\]
Since $\phi(\wdt f)\in (\ub)^{\mu(k)}$, by Proposition \ref{chevalley},
$\wdt f\in (x)^{(d+1)^d(k+d+1)}+\Ker(\phi)$. Therefore
$$
\wdt f=fg+\sum_{i=1}^d(\wdt a_i-a_i)x_n^{d-i}$$ for some $g$, where  $$ \sum_{i=1}^d(\wdt a_i-a_i)x_n^{d-i}\in \Ker(\phi)+(\x)^{(d+1)^d(k+d+1)}.$$
Thus we can write
\begin{equation}\label{rel_ini}
\sum_{i=1}^d(\wdt a_i-a_i)x_n^{d-i}=fh+\e
\end{equation} 
where $\e\in(\x)^{(d+1)^d(k+d+1)}$. 
We denote by $\nu$ the monomial valuation defined by
$$\nu\left(\sum_{\a\in\N^n}g_\a x^\a\right):=\min\{(d+1)(\a_1+\cdots+\a_{n-1})+\a_n\mid g_\a\neq 0\}.$$
For a power series $g$, we denote by $\ini(g)$ its initial term in respect to this monomial valuation. We remark that, for any $g$, $(d+1)\ord(g)\geq \nu(g)\geq \ord(g)$.\\
Note that $\ini(f)=x_n^d$. But, in \eqref{rel_ini}, we see that the initial term of the left hand side is not divisible by $x_n^d$.  Therefore $\nu\left(\sum_{i=1}^d(\wdt a_i-a_i)x_n^{d-i}\right)\geq\nu(\e)$. Therefore
$$(d+1)\ord\left(\sum_{i=1}^d(\wdt a_i-a_i)x_n^{d-i}\right)\geq \ord(\e).$$ Thus, there is a $i_0$ such that $$\ord((\wdt a_{i_0}-a_{i_0})x_n^{d-i_0})\geq (d+1)^{d-1}(k+d+1).$$
 In particular $  \wdt a_{i_0}-a_{i_0}\in (\x)^{(d+1)^{d-1}(k+d+1)-(d-i_0)}\subset(\x)^{k+1}$.
On the other hand we have  that $\sum_{i\neq i_0}(\wdt a_i-a_i)x_n^{d-i}\in \Ker(\phi)+(\x)^{(d+1)^{d-1}(k+d+1)}$. The result is proved by induction  on the number of terms in the sum.
\end{proof}
We are now ready to turn to the proof of the main result of this subsection:

\begin{proof}[Proof of the Extension Lemma \ref{cor_linear_equations}]
We consider, for each $\pa\in U$, the following system of linear equations
\begin{equation}
\tag{$S_\infty(\pa)$}\label{eq_infinity_par}
\forall \g\in\N^m,\ \ \  F_{\g}(\pa)(A_{i,\b})=0
\end{equation} 
where $F,A_i$ are as in equations \eqref{eq:universalpolynomial} and \eqref{eq:universalpowerseries}, respectively. Set $\Psi_k =  \pi_k\circ \Psi$ where $\pi_k: \K^n \lgw\K$ is the projection to the $k$-entry, and note that all of its derivatives $\frac{\partial^{|\gamma|}}{\partial \ub^{\gamma}}\Psi_k(\cdot,0)$ are globally defined morphisms over $\Lambda$. Now consider:
\[
F(\Psi^*_{1,\pa},\ldots, \Psi^*_{n,\pa})=\sum_{\g\in\N^m}F_{\g}(\pa) \, \ub^\g
\]
where $\pa \in \Lambda$, and
\[
F_{\g}(\pa) = \sum_{i=1}^{d}  \sum_{\beta \in \N^{n-1},\,\beta \leq \gamma} M_{\g,i,\b}(\pa) A_{i,\b}+B_{\g}(\pa)
\]
with $M_{\g,i,\b}(\pa)$ and $B_{\g}(\pa)$ polynomials in the derivatives of $\Psi^*_{\pa}$. In particular, note that $M_{\g,i,\b}(\pa)$ and $B_{\g}(\pa)$ belong to $\mathcal{O}(\Lambda)$.

As before, for any $k\in\N$, we consider the finite system of linear equations:
\begin{equation}\tag{$S_{k}(\pa)$}\label{eq_S_k_par}\forall \g\in\N^m, |\g|< k,\ \ \  F_{\g}(\pa)(A_{i,\b})=0.\end{equation} 
Let $s_k$ denote the number of indexes $\g$ such that $|\g| <k$. The system \eqref{eq_S_k_par} can be written as 
\[
M^{(k)}(\pa) \cdot A^{(k)}+B^{(k)}(\pa)=0
\]
where $M^{(k)}(\pa)$ is the $(s_k \times d s_k )$-matrix with entries $M_{\g,i,\b}(\pa)$, $A^{(k)}$ is the $(ds_k\times 1)$-column with entries $A_{i,\b}$, and $B^{(k)}(\pa)$ is the $(s_k \times 1)$-column with entries $B_{\g}(\pa)$. We denote by $M_{i,\b}^{(k)}(\pa)$ the column of $M^{(k)}(\pa)$ corresponding to $A_{i,\b}$, that is:
\[
M^{(k)}(\pa) \cdot A^{(k)} = \sum_{i=1}^d \sum_{|\b| <k} M_{i,\b}^{(k)}(\pa) A_{i,\b}
\]
Let us fix $i_0\in\{1,\ldots, d\}$ and $\b_0\in\N^{n-1}$ and let us prove that there exists $\overline{a}_{i_0,\b_0} \in \LLL$ whose restriction to $U$ is equal to $a_{i_0,\b_0}$. For every $k\in \N$ with $k >|\b_0|$, let us denote by $t_{0}^{(k)}(\pa)$ the dimension of the $\K$-vector space $T^{(k)}_0(\pa)$ generated by the $M_{i,\b}^{(k)}(\pa)$  for $(i,\b)\neq (i_0,\b_0)$. There is an analytic proper subset $D^{(k)}$ of $\Lambda$ such that for every $\pa\in\Lambda\setminus D^{(k)}_0$, $t_{0}^{(k)}(\pa)$ is maximal; denote by $t^{(k)}_0$ this maximal value.

We now fix $\pa\in U\setminus  \bigcup_{k>|\beta|} D^{(k)}_0$ and consider $\mu_\pa$ the  Chevalley function of Corollary \ref{app1} associated to $f(\pa,x)$. We now fix $k=|\b_0|+1$ and we set $\ell=\mu_\pa(k)$. To simplify the notation, set $t^{(\ell)}_0 = t_0$, and consider $\K$-linearly independent vectors $M_{i_1,\b_1}^{(\ell)}(\pa)$, $M_{i_2,\b_2}^{(\ell)}(\pa)$, \ldots, $M_{i_{t_0},\b_{t_0}}^{(\ell)}(\pa)$ which generate $T^{(\ell)}_0(\pa)$. 

\begin{claim}\label{cl:1}
There exists a neighborhood $U_{\pa}$ of $\pa$ such that 
$M_{i_0,\b_0}^{(\ell)}(\pb)$ does not belong to the vector space generated by $T^{(\ell)}_0(\pb)$ for every $\pb\in U_{\pa}$.
\end{claim}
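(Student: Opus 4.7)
The plan is to first establish the non-inclusion at the base point $\pa$ itself by exploiting the uniqueness part of Corollary \ref{app1}, and then to propagate the conclusion to a neighborhood of $\pa$ by a routine semi-continuity argument.

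For the first step, I would argue by contradiction and assume that $M_{i_0,\b_0}^{(\ell)}(\pa) \in T_0^{(\ell)}(\pa)$. Since the vectors $M_{i_j,\b_j}^{(\ell)}(\pa)$ for $j=1,\ldots,t_0$ form a basis of $T_0^{(\ell)}(\pa)$ by construction, there exist $c_1,\ldots,c_{t_0} \in \K$ with
\[
M_{i_0,\b_0}^{(\ell)}(\pa) = \sum_{j=1}^{t_0} c_j M_{i_j,\b_j}^{(\ell)}(\pa).
\]
I would then define a new sequence $(\wdt a_{i,\b}) \in \K^{\N}$ by setting $\wdt a_{i_0,\b_0} = a_{i_0,\b_0}(\pa)+1$, $\wdt a_{i_j,\b_j} = a_{i_j,\b_j}(\pa) - c_j$ for $j=1,\ldots,t_0$, and $\wdt a_{i,\b} = a_{i,\b}(\pa)$ for all other indices. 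The linear dependence above gives $M^{(\ell)}(\pa) \cdot (\wdt A^{(\ell)} - A^{(\ell)}(\pa)) = 0$, so $(\wdt a_{i,\b})$ is again a solution of the finite system $S_{\ell}(\pa) = S_{\mu_{\pa}(k)}(\pa)$. But since $k = |\b_0|+1$, Corollary \ref{app1} applied to $\Psi^*_\pa$ forces $\wdt a_{i_0,\b_0} = a_{i_0,\b_0}(\pa)$, contradicting our choice $\wdt a_{i_0,\b_0} = a_{i_0,\b_0}(\pa)+1$.

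Having shown that the $t_0+1$ vectors $M_{i_0,\b_0}^{(\ell)}(\pa), M_{i_1,\b_1}^{(\ell)}(\pa), \ldots, M_{i_{t_0},\b_{t_0}}^{(\ell)}(\pa)$ are linearly independent at $\pa$, I would then note that their entries are analytic functions of the parameter, so the non-vanishing of a suitable $(t_0+1)\times(t_0+1)$ minor is an open condition that persists on some open neighborhood $V$ of $\pa$. Because $D_0^{(\ell)}$ is a proper analytic subset of $\Lambda$ not containing $\pa$, after shrinking $V$ we may also assume $V \cap D_0^{(\ell)} = \emptyset$, so that $\dim T_0^{(\ell)}(\pb) = t_0$ for all $\pb \in V$. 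On such a $V$ the vectors $M_{i_1,\b_1}^{(\ell)}(\pb), \ldots, M_{i_{t_0},\b_{t_0}}^{(\ell)}(\pb)$ remain linearly independent, hence form a basis of $T_0^{(\ell)}(\pb)$; combined with the independence of the full $t_0+1$ tuple, this yields $M_{i_0,\b_0}^{(\ell)}(\pb) \notin T_0^{(\ell)}(\pb)$, and setting $U_{\pa} = V$ completes the argument.

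I expect the main obstacle to be the contradiction construction in the first paragraph, namely the recognition that a linear dependence among the columns $M_{i,\b}^{(\ell)}(\pa)$ translates into a nontrivial perturbation of the canonical solution $(a_{i,\b}(\pa))$ to the \emph{finite} system $S_{\mu_\pa(k)}(\pa)$, which by Corollary \ref{app1} is forbidden to disagree with it in the coordinate $(i_0,\b_0)$ since $|\b_0| < k$. The extension to a neighborhood is then straightforward, relying only on continuity of determinants and closedness of $D_0^{(\ell)}$.
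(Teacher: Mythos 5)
Your proposal is correct and follows essentially the same route as the paper's proof: establish the non-inclusion at $\pa$ via the uniqueness of the $(i_0,\b_0)$-entry furnished by Corollary \ref{app1}, then propagate to a neighborhood by analyticity. The only difference is presentational — where the paper dismisses the alternative with the phrase ``otherwise ... it would be possible to compensate the terms ... to get a different solution,'' you make that compensation explicit by writing down the perturbed sequence $\wdt a_{i_0,\b_0}=a_{i_0,\b_0}(\pa)+1$, $\wdt a_{i_j,\b_j}=a_{i_j,\b_j}(\pa)-c_j$ and checking it still solves $S_\ell(\pa)$, which is a welcome clarification; and in the second step your extra shrinking to avoid $D_0^{(\ell)}$ is harmless but unnecessary, since the nonvanishing $(t_0+1)\times(t_0+1)$ minor on $V$ already forces $\dim T_0^{(\ell)}(\pb)=t_0$ there by maximality of $t_0$.
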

\begin{proof}
Indeed, from the definition of $T_0^{(\ell)}(\pa)$ the equality $M^{(\ell)}(\pa) \cdot A^{(\ell)}+B^{(\ell)}(\pa)=0$ can be re-written as:
\[
A_{i_0,\b_0}M_{i_0,\b_0}^{(\ell)}(\pa)+\sum_{j=1}^{t_0}(A_{i_j,\b_j}+L_{j})M_{i_j,\b_j}^{(\ell)}(\pa) +B^{(\ell)}(\pa)=0.
\]
where the $L_j$ are $\K$-linear combinations of the terms $A_{i,\b}$ with $(i,\b)\neq (i_j,\b_j)$ for $j=0,\ldots,t_0$. We recall that, by Corollary \ref{app1}, there exists a unique entry $a_{i_0,\b_0} = A_{i_0,\b_0}$ for which the above system admits a solution. It is now immediate that $M_{i_0,\b_0}^{(\ell)}(\pa) \notin T^{(\ell)}_0(\pa)$ (otherwise, for each choice of $A_{i_0,\b_0}$, it would be possible to compensate the terms $A_{i,\b}$ with $(i,\b) \neq (i_0,\b_0)$ in order to get a different solution). We conclude easily from the analyticity of the vectors $M_{i,\b}^{(\ell)}$. 
\end{proof}

Now, by analyticity of the entries $M_{i,\b}^{(\ell)}$, there is a proper analytic subset $E_0$ of $\Lambda$ such that, for every $\pb\in \Lambda\setminus E_0$, the vectors $M_{i_j,\b_j}^{(\ell)}(\pb)$, for $0\leqslant j\leqslant t_0$, are $\K$-linearly independent. Moreover, since $t_0=\max_{\pc}\{t_0^{(\ell)}(\pc)\}$, these vectors form a basis of the vector space generated by all the $M^{(\ell)}_{i,\b}(\pb)$. 
Therefore, for a given $(i,\b)\neq(i_j,\b_j)$ for $j=0,\ldots,t_0$ and for a given $\pb\in\Lambda\setminus E_0$, the equation $\sum_{j=0}^{t_0}M_{i_j,\b_j}^{(\ell)}(\pb)X_j=M_{i,\b}(\pb)$ has a unique solution $X=(X_0,\ldots, X_{t_0})\in\K$. Let us denote by $M_0(\pb)$ the $ s_{\ell} \times (t_0+1)$-matrix with columns $M_{i_j,\b_j}^{(\ell)}(\pb)$ for $j=0,\ldots,t_0$. By Cramer's rule, the $X_i$ have the form $g_i(\pb)/\D_0(\pb)$ where $g_i(\pb)$ is a minor of a matrix whose entries are some of the entries of the $M_{i_j,\b_j}^{(\ell)}(\pb)$ and of $M_{i,\b}(\pb)$, and $\D_0(\pb)$ is the determinant of  a   $(t_0+1)$-square sub-matrix $N_0(\pb)$ of $M_0(\pb)$. Therefore, there is a proper analytic subset $E_1$ of $\Lambda$, such that for every $\pb'\in\Lambda\setminus E_1$, $\D_0(\pb')\neq 0$.
In particular the system ($S_{\ell}(\pb)$), for $\pb\in\Lambda\setminus (E_0\cup E_1)$, can be rewritten as
\[
\sum_{j=0}^{t_0} M_{i_j,\b_j}^{(\ell)}(\pb)(A_{i_j,\b_j}+L_{i_j, \b_j}(\pb))+B^{(\ell)}(\pb)=0
\] 
where the $L_{i_j, \b_j}(\pb)$ are linear forms in the $A_{i,\b}$ for $(i,\b) \neq (i_j,\b_j)$ for $j=0,\ldots,t_0$, with analytic coefficients. We claim that $L_{i_0,\b_0}(\pb) \equiv 0$. Indeed, by Claim \ref{cl:1}, note that for every $\pc \in U_{\pa} \setminus  \bigcup_{k>|\beta|} D^{(k)}_0$ we have that $M_{i_0,\b_0}^{(\ell)}(\pc)$ does not belong to the $t_0$-vector space $T_0^{(\ell)}(\pc)$, implying that $ L_{i_0, \b_0}(\pc)$ is equal to zero in an open set; by analyticity $L_{i_0,\b_0}\equiv 0$. In particular the system ($S_{\ell}(\pb)$), for $\pb\in\Lambda\setminus (E_0\cup E_1)$, can be rewritten as:
\[
M_{i_0,\b_0}^{(\ell)}(\pb) A_{i_0,\b_0} + \sum_{j=1}^{t_0} M_{i_j,\b_j}^{(\ell)}(\pb)(A_{i_j,\b_j}+L_{i_j, \b_j}(\pb))+B^{(\ell)}(\pb)=0.
\] 
It now follows from Cramer's rule that there exists a solution $\overline{a}_{i_0,\b_0}(\pb)$ of the truncated system which can be expressed as a division $Q_0(\pb) / \Delta_0(\pb)$, where $Q_0(\pb)$ depends on the entries of $M_{i_j,\b_j}^{(\ell)}(\pb)$ for $j=1,\ldots,t_0\}$ and $B^{(\ell)}(\pb)$. We now remark that Claim \ref{cl:1} implies that $\overline{a}_{i_0,\b_0}(\pb) =a_{i_0,\b_0}(\pb) $ for every $\pb \in U_{\pa} \setminus (D_0 \cup Z_0)$, which implies that they are equal over $U \setminus Z_0$. We conclude that $a_{i_0,\b_0}$ can be extended as a holomorphic function on $\Lambda\setminus Z_0$ that belongs to $\LLL$. Since the choice of $(i_0,\b_0)$ was arbitrary, this proves the Lemma.
\end{proof}

\subsection{Proof of Proposition \ref{cor_extension}}

Let $\Phi: \Omega \lgw\K^n$ and $\varphi: \Lambda \lgw\Omega$ be the two morphisms from the definition of admissible family \ref{def:admissibleFamily}, and recall that $\Psi(\pa,\ub) = \Phi( \varphi(\pa) + \ub) - \Phi(\varphi(\pa))$. Let $\pa\in \RR(\Psi,\Lambda)$ and set $r:=\Gr(\Psi_\pa) = \Fr(\Psi_{\pa})$; in particular, $r = \Gr(\Phi_{\varphi(\pa)})=\Fr(\Phi_{\varphi(\pa)})$. It follows from Gabrielov's rank Theorem (or the rank Theorem \ref{thm:TemperateRank}) that $\Ar(\Phi_{\varphi(\pa)}) =r$. 

Apart from a translation in $\x$, we may suppose that $\Phi(\varphi(\pa)) =0$. Let $(Z,0)$ be the germ of analytic set defined by $\Ker(\Phi^*_{\varphi(\pa)})$ and note that $r=\dim(Z,0)$. Apart from a linear change of coordinates in $\x$, we may assume that the projection $\pi:(Z,0)\lgw (\K^r,0)$ on the first $r$ coordinates is finite. In particular, each function $x_i$, for $i>r$, is finite over the ring of convergent power series $\K\{x_1,\ldots, x_r\}$. That is, by the Weierstrass preparation theorem, there exist non zero Weierstrass polynomials 
\[
P_i(x_1,\ldots, x_r,x_{r+i})\in\K\{x_1,\ldots, x_r\}[x_{r+i}], \ \text{ for } i=1,\ldots, n-r,
\]
belonging to $\Ker(\Phi^*_{\varphi(\pa)})$. By replacing each $P_i$ by one of its irreducible factors we may assume that the $P_i$ are irreducible Weierstrass polynomials at $0$.

We claim that, apart from changing the choice of point $\pa \in \RR(\Psi,\Lambda)$ and re-centering the coordinate system $\x$ accordingly, there exists a neighborhood $U$ of $\pa$ such that $P_i$ are well-defined and irreducible at every point in $\Phi(\varphi(U))$. Indeed, let $V$ be an open neighborhood of $0$ in $\K^n$ on which the $P_i$ are well-defined, and $U$ be an open connected neighborhood of $\pa$ such that $\Phi(\varphi(U)) \subset V$. Apart from shrinking $U$ and $V$, we may suppose that $P_i \in \Ker(\Phi^*_{\varphi(\pb)}) $ for every $\pb \in U$; in particular, $U \subset \RR(\Psi,\Lambda)$. Now, recall that being not irreducible is an open property for the Euclidean topology, thus the property of being irreducible is a closed property. If one of $P_i$ is not irreducible at a point $\Phi(\varphi(\pb))$, for some $\pb \in U$, we may replace $\pa$ by $\pb$, $P_i$ by one of its irreducible factors at this point, and we shrink $U$ and $V$ accordingly. Since the degree of the $P_i$ is a positive integer, this process should end in a finite number of steps, proving the claim.

Fix $s=1$, \ldots $,n-r$, set $\x^{(s)} = (x_1,\ldots,x_r,x_{r+s})$, $\Phi^{(s)} := (\Phi_1,\ldots,\Phi_r,\Phi_{r+s})$, and denote by $\Psi^{(s)} = (\Psi_1,\ldots,\Psi_r,\Psi_{r+s})$ the  family associated to $\Phi^{(s)}$ and $\varphi$. Note that $ U \subset \RR(\Psi^{(s)},\Lambda) $ by construction. Moreover $\Ker({\Psi^{(s)}_{\pa}}^*)$ is generated by $P_s$ since $P_s$ is irreducible and   $\Ker({\Psi^{(s)}_{\pa}}^*)$ is a height one prime ideal of $\K\{\x^{(s)}\}$. We set: 
\[
f_s(\pb,\x^{(s)}):=P_i\left(\Phi^{(s)}( \varphi(\pb)) + \x^{(s)}\right)
\]
for every $\pb\in U$, which can be written as:
\[
f_s(\pb,\x^{(s)}) = y^d + a_1(\pb,\x')y^{d-1} + \cdot + a_d(\pb,\x')
\]
where $y = x_{r+s}$ and $\x' = (x_1,\ldots,x_r)$. First, note that $a_i(\pb,\x') \in \mathcal{O}(U) \lb \x' \rb$ since $\Phi \circ \varphi$ is an analytic map defined on $U$ and $P_i$ is well defined in $\Phi(\varphi(U))$. Second, note that $f_s(\pb,\x^{(s)}) \in \Ker({\Psi^{(s)}_{\pb}}^*)$ for every $\pb \in U$ since ${\Psi^{(s)}_{\pb}}^*(f_i) = P_s \circ \Phi^{(s)}_{\varphi(\pb)}  \equiv 0$, and that $f_s(\pb,\x^{(s)})$ generates $\Ker({\Psi^{(s)}_{\pb}}^*)$ since $P_i$ is irreducible. Third, note that $f_i(\pb,0,y) = y^{k(\pb)} U(\pb,y)$ for some $1 \leq k(\pb) \leq d$ and $U(\pb,y)$ is a monic polynomial in $y$ coprime with $y$. By Hensel Lemma (see \cite[18.5.13]{Gr}), this implies  that $f_i(\pb,\x',y)$ is the product of two monic polynomials of degree $k(\pb)$ and $d-k(\pb)$ respectively.
From the fact that $P_i$ is irreducible and $k(\pb)>0$ at every point $\pb \in U$, we conclude that $k(\pb)=d$, that is, $f_s(\pb,0,y) = y^d$. These three observations show that $f_s$ satisfies all hypothesis of Lemma \ref{cor_linear_equations}, so that it can be extended as a power series $\overline{f}_s(\x^{(s)})$ of $\LLL\lb \x\rb$, where $\LLL$ is the fraction field of $\mathcal{O}(\Lambda)$, such that $\Psi_{\LLL}^{\ast}(\overline{f}_s) =0$. We conclude that $\Fr(\Psi_\LLL^*)\leq r$, and since $\Gr(\Psi_\LLL^*)=r$, we get that $\Gr(\Psi)=\Fr(\Psi_\LLL^*)$, finishing the proof.




\end{document}